\documentclass[a4paper]{amsart}
\usepackage{graphicx}
\usepackage{amsmath}
\usepackage{amssymb}
\usepackage{amsfonts}
\usepackage{amsthm}
\usepackage{eucal}
\usepackage{tikz}
\usetikzlibrary{cd}

\theoremstyle{plain}
\newtheorem{theorem}{Theorem}[section]

\newtheorem{lemma}[theorem]{Lemma}
\newtheorem{proposition}[theorem]{Proposition}

\newtheorem{corollary}[theorem]{Corollary}

\newtheorem*{proposition*}{Proposition}
\newtheorem*{corollary*}{Corollary}

\theoremstyle{definition}
\newtheorem{definition}[theorem]{Definition}
\newtheorem{example}[theorem]{Example}

\newtheorem{remark}[theorem]{Remark}

\makeatletter
\@addtoreset{theorem}{section}
\@addtoreset{subsection}{section}
\makeatother

\setlength{\parindent}{0.0in}
\setlength{\parskip}{0.05in}

\begin{document}

\title{Lie algebra models for unstable homotopy theory}

\author[Gijs Heuts]{Gijs Heuts}
\address{Utrecht University, Department of Mathematical Sciences, Budapestlaan 6, 3584CD Utrecht, the Netherlands}
\email{g.s.k.s.heuts@uu.nl}

\date{}

\begin{abstract}
Quillen showed how to describe the homotopy theory of simply-connected rational spaces in terms of differential graded Lie algebras. Here we survey a generalization of Quillen's results that describes the $v_n$-periodic localizations of homotopy theory (where rational corresponds to $n=0$) in terms of spectral Lie algebras. The latter form an extension of the theory of Lie algebras to the setting of stable homotopy theory. This is a chapter written for the Handbook of Homotopy Theory edited by Haynes Miller.
\end{abstract}

\maketitle

\tableofcontents

\section{Introduction}
\label{sec:introduction}

Write $\mathrm{Ho}(\mathcal{S}_*)$ for the homotopy category of pointed spaces. The tools of algebraic topology often amount to studying spaces by applying a variety of functors
\begin{equation*}
F: \mathrm{Ho}(\mathcal{S}_*) \rightarrow \mathrm{Ho}(\mathcal{A}),
\end{equation*}
for $\mathcal{A}$ a homotopy theory which is `algebraic' in nature. A typical example of such an $F$ is the functor $C^*(-: R)$ taking cochains with values in a commutative ring $R$. In this case $\mathcal{A}$ can be taken to be (the opposite of) the category of differential graded algebras over $R$, using the cup product of cochains as multiplication. If one wishes to take the rather subtle commutativity properties of the cup product into account, a more refined choice for $\mathcal{A}$ would be the category of $\mathbf{E}_\infty$-algebras over $R$.

An optimist might hope to choose $\mathcal{A}$ cleverly enough so that one can construct a functor $F$ which is an equivalence of categories. If one restricts the domain of $F$ to the homotopy category $\mathrm{Ho}(\mathcal{S}^{\geq 2}_{\mathbb{Q}})$ of simply-connected rational pointed spaces, then there is the following landmark result \cite{quillen}:

\begin{theorem}[Quillen \cite{quillen}]
\label{thm:rationalhomotopy}
There are equivalences of categories
\[
\begin{tikzcd}
& \mathrm{Ho}(\mathcal{S}^{\geq 2}_{\mathbb{Q}}) \ar{dl}[swap]{L_\mathbb{Q}} \ar{dr}{C_\mathbb{Q}} & \\
\mathrm{Ho}(\mathrm{Lie}(\mathrm{Ch}_{\mathbb{Q}})^{\geq 1}) & & \mathrm{Ho}(\mathrm{coCAlg}(\mathrm{Ch}_{\mathbb{Q}})^{\geq 2}),
\end{tikzcd}
\]
where $\mathrm{Lie}(\mathrm{Ch}_{\mathbb{Q}})^{\geq 1}$ denotes the category of connected differential graded Lie algebras\index{differential graded Lie algebra} over $\mathbb{Q}$ and $\mathrm{coCAlg}(\mathrm{Ch}_{\mathbb{Q}})^{\geq 2}$ the category of simply-connected differential graded cocommutative coalgebras\index{differential graded coalgebra} over $\mathbb{Q}$.
\end{theorem}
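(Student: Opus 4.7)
My plan follows Quillen's original strategy of a zigzag of Quillen equivalences through intermediate model categories. For the functor $L_\mathbb{Q}$, I would proceed through four steps. First, replace a pointed simply-connected rational space $X$ by a reduced simplicial model and apply Kan's loop group functor $G$, yielding a reduced simplicial group $GX$ whose classifying space $\bar W GX$ recovers $X$ up to weak equivalence. Second, apply termwise Malcev rationalization and pro-nilpotent completion to obtain a simplicial complete cocommutative Hopf algebra. Third, take primitives to arrive at a simplicial Lie algebra over $\mathbb{Q}$. Fourth, apply Dold--Kan normalization to produce a connected dg Lie algebra $L_\mathbb{Q}(X)$. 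I would then verify that each step is a Quillen equivalence on the relevant connectivity class: $G \dashv \bar W$ between reduced simplicial sets and reduced simplicial groups; Malcev completion between rational nilpotent simplicial groups and simplicial complete cocommutative Hopf algebras; the Milnor--Moore primitives/universal-enveloping-algebra equivalence between connected cocommutative Hopf algebras and Lie algebras in characteristic zero; and Dold--Kan.

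For the right-hand equivalence, I would construct $C_\mathbb{Q}(X)$ as the rational chains $C_*(X;\mathbb{Q})$ equipped with the Alexander--Whitney coproduct, which is cocommutative up to coherent homotopy and can be rectified to an honest cocommutative dg coalgebra rationally. To relate the Lie and coalgebra sides directly, I would invoke Quillen's Koszul-type duality: the Chevalley--Eilenberg functor $C_*^{CE}: \mathrm{Lie}(\mathrm{Ch}_\mathbb{Q}) \to \mathrm{coCAlg}(\mathrm{Ch}_\mathbb{Q})$ together with its left adjoint $\mathcal{L}$ (the free dg Lie algebra on the desuspension of the coaugmentation coideal) form a Quillen equivalence under the stated connectivity hypotheses. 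The unit and counit would be analyzed via the Poincaré--Birkhoff--Witt filtration on $C_*^{CE}$, whose associated graded computes the Koszul complex and therefore gives a quasi-isomorphism $\mathcal{L} C_*^{CE}(L) \to L$ for $L$ connected and dually for simply-connected coalgebras.

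The main technical obstacle is verifying that each adjunction unit and counit is a weak equivalence on the appropriate subcategory, and this is precisely where rationality and connectivity are both essential. Malcev completion is homotopically well-behaved only after rationalizing nilpotent groups; the Milnor--Moore identification of cocommutative Hopf algebras with their primitives fails in positive characteristic; and the PBW filtration splits only rationally. Simple connectivity of $X$ ensures that $GX$ is connected and its lower central series is nilpotent in each simplicial degree, so that all the relevant completions and spectral sequences converge. A final bookkeeping step would be to verify that the zigzag composes into honest inverse equivalences of homotopy categories and that the resulting $L_\mathbb{Q}$ and $C_\mathbb{Q}$ are related by $C_\mathbb{Q} \simeq C_*^{CE} \circ L_\mathbb{Q}$, closing the triangle.
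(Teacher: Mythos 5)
Your proposal reproduces Quillen's original construction from \cite{quillen}: the zigzag of Quillen equivalences through reduced simplicial sets, simplicial groups (via the Kan loop group), simplicial complete cocommutative Hopf algebras (via Malcev/pro-nilpotent completion), simplicial Lie algebras (via primitives and Milnor--Moore), and finally dg Lie algebras (via Dold--Kan). This is a correct route to Theorem~\ref{thm:rationalhomotopy}, and you have identified the genuinely load-bearing points: rationality is used in the Malcev completion, in Milnor--Moore, and in the PBW splitting; simple connectivity is used to make $GX$ connected and degreewise nilpotent. You have also correctly closed the triangle by observing $C_\mathbb{Q} \simeq \mathrm{CE} \circ L_\mathbb{Q}$.

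This is, however, a genuinely different route from the one sketched in the present chapter. The chapter does not redo Quillen's zigzag at all; instead it proves the coalgebra half $\mathcal{S}^{\geq 2}_\mathbb{Q} \simeq \mathrm{coCAlg}^{\mathrm{nu}}(\mathrm{Sp}_\mathbb{Q})^{\geq 2}$ directly (Theorem~\ref{thm:quillen2}) in two ways: first by Postnikov induction on principal fibrations using the homological Eilenberg--Moore spectral sequence and the identification of $\widetilde{C}_\mathbb{Q}(K(V,n))$ as a cofree coalgebra, and second by establishing that the comonad $\Sigma^\infty_\mathbb{Q}\Omega^\infty$ is comonadic and that its Goodwillie tower converges to the cofree cocommutative coalgebra comonad. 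The Lie-algebra half is then obtained not from simplicial group theory but by composing with the Chevalley--Eilenberg equivalence of Theorem~\ref{thm:CE}, which the chapter situates in the general operadic Koszul duality framework of Section~\ref{sec:koszul} (using Theorem~\ref{thm:chingharper} applied to $R=H\mathbb{Q}$ and $\mathcal{O} = \mathrm{Lie}$). What the chapter's route buys, and what you cannot easily get from the Kan-loop-group/Malcev zigzag, is precisely the generalization that is the theme of this survey: the Goodwillie-calculus argument for the coalgebra model, combined with $\Phi_n \dashv \Theta_n$ and Tate vanishing in place of the rational fixed-points$\,=\,$orbits equivalence (Remark~\ref{rmk:Tatevanishing}), is what carries over to $\mathcal{S}_{v_n}$ and produces Theorem~\ref{thm:vnperiodicspaces}. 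The unstable group-theoretic ingredients in your approach (loop groups, nilpotent completion of groups, Malcev theory) have no counterpart there.

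Two small points of care in your writeup. First, the Alexander--Whitney coproduct on normalized chains is coassociative but only $E_\infty$-cocommutative; the rectification of an $E_\infty$-coalgebra to a strictly cocommutative dg coalgebra over $\mathbb{Q}$ is true but not free, and Quillen himself arranges his chain of equivalences to avoid ever needing it. The chapter sidesteps this by working with coalgebra objects in $\mathrm{Sp}_\mathbb{Q}$ rather than with explicit chain-level diagonals. Second, you describe $\mathcal{L}$ as the \emph{left} adjoint of $\mathrm{CE}$, which matches Quillen's Appendix~B, while the present chapter (Theorem~\ref{thm:koszul1}) exhibits the inverse to $\mathrm{CE}$ as a \emph{right} adjoint, the derived primitives. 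There is no contradiction once restricted to the connectivity ranges where $\mathrm{CE}$ is an equivalence, but the two point-set adjunctions are different and you should not conflate them before passing to homotopy categories.
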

% remark on counits

The functor $C_{\mathbb{Q}}$ is essentially a version of the rational chains; in particular, the homology of the underlying chain complex of $C_{\mathbb{Q}}(X)$ is the rational homology of the space $X$ and the coproduct is the usual one arising from the diagonal map of $X$. The functor $L_{\mathbb{Q}}$ is perhaps more surprising. It refines the collection of rational homotopy groups of $X$, in the sense that there is a natural isomorphism
\begin{equation*}
H_*(L_{\mathbb{Q}}(X)) \cong (\pi_{*+1} X) \otimes \mathbb{Q}.
\end{equation*}
The induced Lie bracket on $H_*(L_{\mathbb{Q}}(X))$ corresponds to the classical Whitehead product\index{Whitehead product} on the (rational) homotopy groups of $X$.

Theorem \ref{thm:rationalhomotopy} is a very satisfying result in itself. In conjunction with Sullivan's approach to rational homotopy theory \index{rational homotopy theory} using minimal models \cite{sullivan} it has led to many interesting developments within homotopy theory, as well as striking applications to geometry (see \cite{felixhalperinthomas, felixopreatanre} for an overview). Moreover, it raises a number of natural questions. Are there subcategories of $\mathrm{Ho}(\mathcal{S}_*)$ other than that of rational spaces which can be described in terms of Lie algebras and/or coalgebras? Even better, does the entire category $\mathrm{Ho}(\mathcal{S}_*)$ admit such an algebraic model? This survey mostly concerns an answer to the first question, although we include some discussion of the second one at the very end of Section \ref{sec:questions}.

% expand this last sentence into a paragraph:
Rationalization is the first step in a hierarchy of localizations of $\mathcal{S}_*$, which we will refer to as the \emph{$v_n$-periodic localizations}\index{$v_n$-periodic localization}. For each prime $p$ there is a family of these, indexed by the natural numbers $n \geq 0$. The case $n=0$ is precisely rational homotopy theory. The $v_n$-periodic localizations are in a precise sense the most elementary (or `prime') localizations of homotopy theory. Our main objective will be to discuss the following generalization of Quillen's Theorem \ref{thm:rationalhomotopy} (see \cite{heuts}):

\begin{theorem}
\label{thm:vnperiodicspaces}
There is an equivalence of homotopy theories
\begin{equation*}
L_{v_n}\colon \mathcal{S}_{v_n} \longrightarrow \mathrm{Lie}(\mathrm{Sp}_{v_n}),
\end{equation*}
with $\mathcal{S}_{v_n}$ denoting the $v_n$-periodic localization of $\mathcal{S}_*$ and $\mathrm{Lie}(\mathrm{Sp}_{v_n})$ the $v_n$-periodic localization of the homotopy theory of \emph{spectral Lie algebras}. \index{spectral Lie algebra}
\end{theorem}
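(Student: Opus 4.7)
The plan is to approach this via Goodwillie's calculus of functors. The starting point is the Taylor tower of the identity functor on pointed spaces: the $n$th derivative $\partial_n(\mathrm{id})$ is a spectrum carrying a $\Sigma_n$-action, and a theorem of Ching (building on Arone--Mahowald) identifies the symmetric sequence $\{\partial_n(\mathrm{id})\}$ as the spectral Lie operad, with operad structure coming from composition of multilinear layers of the Taylor tower. Consequently, for any pointed space $X$, the collection of spectra $\partial_n(\mathrm{id})(X)$ assembles into a spectral Lie algebra. This produces the candidate functor
\[
L\colon \mathcal{S}_* \longrightarrow \mathrm{Lie}(\mathrm{Sp}),
\]
and after passing to $v_n$-periodic localizations on both sides we obtain the proposed equivalence $L_{v_n}$.

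The next step is to construct a candidate inverse. On the Lie side one has the (derived) trivial-algebra / primitives adjunction, which in the classical rational setting recovers Quillen's equivalence. Abstractly this is a Koszul duality between the spectral Lie operad and the (nonunital) commutative cooperad on spectra; formally, one may present the inverse as a cobar/bar construction combined with the functor $\Omega^\infty$ from spectra to spaces (more precisely, with a suitable $v_n$-periodic analogue of it, such as a Bousfield--Kuhn-type construction $\Phi_n$). The task is then to show that $L_{v_n}$ and this candidate inverse are mutually inverse on $v_n$-periodic homotopy categories.

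The heart of the proof is a convergence result: after $v_n$-periodic localization, the Taylor tower of the identity converges. The key input is the Arone--Mahowald theorem that the Goodwillie tower of the identity converges on odd-dimensional spheres after $K(n)$- (equivalently $T(n)$-) localization, together with the Bousfield--Kuhn functor and its interaction with stabilization. Combined with the fact that the $v_n$-periodic homotopy theory is generated (as a suitable localizing subcategory) by $v_n$-periodic spheres or Moore spaces, this lets one check fully faithfulness of $L_{v_n}$ on a set of generators: a $v_n$-periodic sphere is sent to a free spectral Lie algebra on a single generator, and the mapping spectrum out of a free Lie algebra is computed by the underlying spectrum, which on the space side reproduces the $v_n$-periodic homotopy groups. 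Essential surjectivity then follows because free $v_n$-periodic Lie algebras are in the image and generate.

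The main obstacle is this last convergence and fully-faithfulness step, i.e.\ showing that the Taylor tower assembles, after $v_n$-periodic localization, into an actual equivalence rather than only an approximation. Two technical phenomena must be handled: first, one needs that $v_n$-periodic localization kills the obstructions to convergence that appear integrally (this is where Tate vanishing and the vanishing of $v_n$-periodic homotopy of symmetric products \`a la Kuhn enter), and second, one needs a comparison between the naive Taylor tower in spaces and the cobar-type construction on the Lie side; equivalently, a Koszul-duality statement identifying the completion of $X$ along its Taylor tower with the cobar of the spectral Lie algebra $L_{v_n}(X)$. Everything else --- the operad identification, the formal Koszul duality framework, and the identification of the functor on free objects --- is then reasonably mechanical given this convergence.
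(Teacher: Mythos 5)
Your proposal hinges on the claim that after $v_n$-periodic localization the Taylor tower of the identity converges, so that checking full faithfulness on $v_n$-periodic spheres (where Arone--Mahowald give convergence) and then invoking generation would suffice. This is precisely where the argument breaks. Convergence of the Goodwillie tower is a statement about inverse limits and does not propagate along the colimits needed to build a general object of $\mathcal{S}_{v_n}$ out of generators; indeed, it fails outright. There are explicit examples of spaces that are not $\Phi_n$-good: for $V$ a type $n$ space with a $v_n$ self-map, $\Sigma^2 V$ corresponds under the equivalence to a \emph{free} spectral Lie algebra, and the Goodwillie tower of $\Phi_n$ yields a direct product of homogeneous layers where the actual value is a direct sum; wedges of spheres fail similarly via Hilton--Milnor; and the cofiber of a map that is a $T(n)_*$-equivalence but not a $v_n$-periodic equivalence has identically vanishing Goodwillie tower despite being a nonzero object of $\mathcal{S}_{v_n}$. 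Relatedly, the candidate inverse you propose (derived primitives / cobar) only recovers the Goodwillie, equivalently pronilpotent, completion of the space, which is the comparison functor $C_{v_n}$ of the paper --- explicitly \emph{not} an equivalence. Finally, the assertion that ``the collection of spectra $\partial_n\mathrm{id}(X)$ assembles into a spectral Lie algebra'' is not a formal consequence of Ching's identification of the derivatives as an operad; producing the Lie algebra structure on $\Phi_n(X)$ is exactly the content that has to be proved, and it is not obtained this way.

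The actual argument runs in the opposite direction around the two-adjunctions diagram. The Bousfield--Kuhn functor $\Phi_n\colon \mathcal{S}_{v_n} \to \mathrm{Sp}_{v_n}$ is a \emph{right} adjoint, with left adjoint $\Theta_n$, and the decisive structural step is the monadicity theorem of Eldred--Heuts--Mathew--Meier: $(\Theta_n,\Phi_n)$ is monadic because $\Phi_n$ is conservative and preserves geometric realizations. The latter is a colimit-preservation property with no connection to Taylor tower convergence. This gives $\mathcal{S}_{v_n} \simeq \mathrm{Alg}_{\Phi_n\Theta_n}(\mathrm{Sp}_{v_n})$ on the nose, reducing the theorem to identifying the monad $\Phi_n\Theta_n$. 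That identification then goes via Tate vanishing for symmetric groups in $T(n)$-local spectra (Kuhn, Clausen--Mathew), which forces $\Phi_n\Theta_n$ to be a \emph{coanalytic} functor and hence to correspond to an operad determined by its Goodwillie derivatives, followed by the Arone--Ching chain-rule calculation identifying those derivatives with $\mathrm{Cobar}(\mathbf{Com}^{\vee}) \simeq \mathbf{L}$. The Arone--Mahowald convergence theorem you cite is true and important, but it enters only later in the $\mathrm{TAQ}$ comparison of Behrens--Rezk, which computes the Goodwillie-complete approximation rather than the equivalence itself.
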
 

Explaining this result will of course require a discussion of spectral Lie algebras, a concept that has only recently surfaced. It has several promising applications within homotopy theory and interesting connections to derived algebraic geometry. 

We now give an overview of the contents of this chapter. In Section \ref{sec:rationalhomotopy} we review Quillen's rational homotopy theory\index{rational homotopy theory}. In particular, we will highlight the relation between the Lie algebra model $L_{\mathbb{Q}}(X)$ and the coalgebra model $C_{\mathbb{Q}}(X)$, which can be expressed in terms of Koszul duality\index{Koszul duality}. Since it plays an important role in this survey, we devote Section \ref{sec:koszul} to a general discussion of this duality. It is preceded by Section \ref{sec:monads}, which gives a short summary of monads and their algebras. In Section \ref{sec:Lie}  we discuss spectral Lie algebras and their connection with the Goodwillie tower of the identity functor \index{Goodwillie tower of the identity} of $\mathcal{S}_*$. In Section \ref{sec:periodicity} we turn to periodic phenomena in homotopy theory. We describe the general philosophy of stable chromatic homotopy theory\index{chromatic homotopy theory}, in analogy with the theory of localizations in algebra, and its application to unstable homotopy theory through the Bousfield--Kuhn functors\index{Bousfield--Kuhn functor}. In Section \ref{sec:Liealgebras} we come to Theorem \ref{thm:vnperiodicspaces}. The reader might note the absence of coalgebras in its statement. There does exist a coalgebra model for $\mathcal{S}_{v_n}$, but unlike the rational case it turns out that the Lie algebra model plays a preferred role for $n > 0$. Loosely speaking, coalgebras can only be used to describe a certain \emph{completion} of $\mathcal{S}_{v_n}$, rather than this homotopy theory itself. We will discuss this point in Section \ref{sec:Liealgebras} as well. In the final Section \ref{sec:questions} we discuss examples and several open questions which deserve further investigation.

We stress that the style of this survey is rather informal. To counter this we include many pointers to the literature, where most of our statements and their proofs can be found with a more technically precise treatment. Still, we provide sketches of the proofs of most major results (sometimes from a non-standard perspective), hoping to make this chapter self-contained enough to convey to the reader all of the essential ideas involved. The reader might also be interested in the survey of Behrens--Rezk \cite{behrensrezksurvey}, which concerns some of the same topics we describe here.

\emph{Conventions and notation.} Thus far we have not been specific about the formalism for homotopy theory we use. Most statements in this survey are sufficiently generic for the reader to adapt to their preferred setting. However, for convenience we work in the setting of $\infty$-categories \cite{joyal,htt}. This handbook also includes an expository account by Groth. Thus $\mathcal{S}_*$ and $\mathrm{Sp}$ will denote the $\infty$-categories of pointed spaces and of spectra respectively. We write $\mathcal{S}_*^{\geq n}$ for the full subcategory of $\mathcal{S}_*$ on the $(n-1)$-connected spaces. We will use the standard notation
\[
\begin{tikzcd}
\mathcal{S}_* \ar[shift left]{r}{\Sigma^\infty} & \mathrm{Sp} \ar[shift left]{l}{\Omega^\infty}
\end{tikzcd}
\] 
for the adjunction between the suspension spectrum $\Sigma^\infty$ and the zeroth space $\Omega^\infty$. All limits and colimits are to be thought of as $\infty$-categorical ones (unless explicitly stated otherwise); in other words, they are \emph{homotopy} limits and colimits. For $\infty$-categories $\mathcal{C}$ and $\mathcal{D}$ we denote by $\mathrm{Fun}(\mathcal{C},\mathcal{D})$ the $\infty$-category of functors between them. Our reference for algebra in the setting of $\infty$-categories is \cite{ha}. When discussing coalgebras we will often use the adjective \emph{commutative} (rather than cocommutative) when no confusion can arise, i.e., when we are not simultaneously considering an algebra structure.

\section{Rational homotopy theory}
\label{sec:rationalhomotopy}

The results of Quillen and Sullivan on rational homotopy theory have been extensively documented \cite{quillen,sullivan,bousfieldgugenheim,felixhalperinthomas,lurierational}; we will therefore feel free to give a somewhat non-traditional exposition with an eye towards the results of later sections. 
\index{rational homotopy theory}

A map $f\colon X \rightarrow Y$ of simply-connected pointed spaces is a \emph{rational equivalence} if it induces an isomorphism on rational homotopy groups, i.e., if for every $n \geq 2$ the map
\begin{equation*}
\pi_n f \otimes \mathbb{Q}\colon \pi_n X \otimes \mathbb{Q} \rightarrow \pi_n Y \otimes \mathbb{Q}
\end{equation*}
is an isomorphism. Equivalently, $f$ induces an isomorphism on rational homology. A simply-connected pointed space $X$ is said to be \emph{rational} if for each $n \geq 2$ the abelian group $\pi_n X$ is uniquely divisible, i.e., is a vector space over $\mathbb{Q}$. Equivalently, each homology group $H_n(X; \mathbb{Z})$ is a rational vector space (with $n \geq 2$ again). We write $\mathcal{S}_{\mathbb{Q}}^{\geq 2}$ for the full subcategory of $\mathcal{S}_*^{\geq 2}$ on the rational spaces.

Every simply-connected space admits a \emph{rationalization} $\eta_X: X \rightarrow X_{\mathbb{Q}}$, which is a map satisfying the following two properties:
\begin{itemize}
\item[(1)] The space $X_{\mathbb{Q}}$ is rational.
\item[(2)] The map $\eta_X$ is a rational equivalence.
\end{itemize}
Moreover, the assignment $X \mapsto X_{\mathbb{Q}}$ can be made functorial and $\eta_X$ can be made a natural transformation. The following terminology is convenient to describe the situation: 

\begin{definition}
\label{def:localization}
\index{localization}
Suppose $\mathcal{C}$ is an $\infty$-category and $W$ a class of morphisms in $\mathcal{C}$. A functor $L\colon \mathcal{C} \rightarrow \mathcal{D}$ is called a \emph{localization of $\mathcal{C}$ at $W$} if $L$ sends each morphism in $W$ to an equivalence in $\mathcal{D}$ and is universal with this property. More precisely, if $\mathcal{E}$ is any other $\infty$-category then precomposition by $L$ determines an equivalence of $\infty$-categories
\begin{equation*}
L^*: \mathrm{Fun}(\mathcal{D}, \mathcal{E}) \rightarrow \mathrm{Fun}_W(\mathcal{C},\mathcal{E}),
\end{equation*}
where the codomain denotes the full subcategory of $\mathrm{Fun}(\mathcal{C},\mathcal{E})$ of functors sending elements of $W$ to equivalences. A localization $L\colon \mathcal{C} \rightarrow \mathcal{D}$ is called \emph{reflective} if $L$ admits a fully faithful right adjoint. \index{localization, reflective}
\end{definition}

Thus, a localization of $\mathcal{C}$ at $W$ is the universal solution to inverting the elements of $W$. If the localization is reflective, then $\mathcal{D}$ can be thought of as a full subcategory of $\mathcal{C}$. Such reflective localizations are closely related to left Bousfield localizations in the context of model categories.

In the setting of interest to us here, the existence of rationalizations described above implies that the functor
\begin{equation*}
\mathcal{S}_*^{\geq 2} \rightarrow \mathcal{S}_{\mathbb{Q}}^{\geq 2}: X \mapsto X_{\mathbb{Q}}
\end{equation*}
is a reflective localization of $\mathcal{S}_*^{\geq 2}$ at the class of rational equivalences. An entirely analogous procedure produces the localization
\begin{equation*}
\mathrm{Sp} \rightarrow \mathrm{Sp}_{\mathbb{Q}}: E \mapsto E_{\mathbb{Q}}
\end{equation*}
of the $\infty$-category of spectra at the rational equivalences. One can explicitly identify the latter functor as $E_{\mathbb{Q}} \simeq H\mathbb{Q} \otimes E$. The $\infty$-category $\mathrm{Sp}_{\mathbb{Q}}$ is in fact equivalent to the $\infty$-category $\mathrm{Ch}_{\mathbb{Q}}$ of rational chain complexes and this equivalence respects the usual symmetric monoidal structures (cf. \cite{schwedeshipley} and Theorem 7.1.2.13 of \cite{ha}). Under these identifications, the rational spectrum $(\Sigma^\infty X)_{\mathbb{Q}}$ corresponds (up to natural equivalence) to the reduced rational chains $\widetilde{C}_*(X; \mathbb{Q})$. Applying the universal property of $\mathcal{S}_{\mathbb{Q}}^{\geq 2}$, the functor
\begin{equation*}
\mathcal{S}_*^{\geq 2} \rightarrow \mathrm{Sp}_{\mathbb{Q}}: X \mapsto (\Sigma^\infty X)_{\mathbb{Q}}
\end{equation*}
factors over a functor for which we write
\begin{equation*}
\Sigma^\infty_{\mathbb{Q}}\colon \mathcal{S}_{\mathbb{Q}}^{\geq 2} \rightarrow \mathrm{Sp}_{\mathbb{Q}}.
\end{equation*}
We will not drag the equivalence between $\mathrm{Sp}_{\mathbb{Q}}$ and $\mathrm{Ch}_{\mathbb{Q}}$ along and use $\Sigma^\infty_{\mathbb{Q}}$ as our version of the (reduced) rational chains functor.

The $\infty$-category $\mathcal{S}_{\mathbb{Q}}^{\geq 2}$ carries two evident symmetric monoidal structures, namely the Cartesian product and the rationalization of the smash product, although the second structure does not have a unit (since we insisted that our spaces be simply-connected). Any rational space $X$ is a commutative coalgebra with respect to the Cartesian product using the diagonal. The natural map from product to smash product then also makes $X$ a commutative coalgebra with respect to smash product (although without a counit). Since $\Sigma^\infty$ is a symmetric monoidal functor with respect to smash product, we conclude that $\Sigma^\infty_{\mathbb{Q}}$ may be factored as a composition
\begin{equation*}
\mathcal{S}_{\mathbb{Q}}^{\geq 2} \xrightarrow{\widetilde{C}_{\mathbb{Q}}} \mathrm{coCAlg}^{\mathrm{nu}}(\mathrm{Sp}_{\mathbb{Q}}) \rightarrow \mathrm{Sp}_{\mathbb{Q}},
\end{equation*}
where the second arrow is the forgetful functor. The middle term denotes the $\infty$-category of commutative coalgebras without counit in the symmetric monoidal $\infty$-category $\mathrm{Sp}_{\mathbb{Q}}$. For us a commutative coalgebra without counit in a symmetric monoidal $\infty$-category $\mathcal{C}$ is by definition a non-unital commutative algebra object of $\mathcal{C}^{\mathrm{op}}$ (cf. Definition 5.3 of \cite{heuts} or Definition 3.1.1 of \cite{lurieelliptic}).
\index{coalgebra, commutative}

\begin{remark}
There are often good 1-categorical models for $\infty$-categories of commutative algebra objects; for example, the $\infty$-category $\mathrm{CAlg}(\mathrm{Sp})$ of commutative ring spectra can be built from a model category of commutative monoids (in the usual 1-categorical sense) in any good symmetric monoidal model category of spectra. The situation for coalgebras is much less pleasant; we will ignore the issue and work with the above definition.
% citations?
% possibly say more:
% It turns out that $\mathrm{coCAlg}(\mathrm{Ch}_{\mathbb{Q}})^{\geq 2}$ can be described in terms of a model structure on the category of differential graded coalgebras over $\mathbb{Q}$, but this description does not generalize well. Even when model structures on categories of coalgebras exist \cite{hessshipley}, the associated homotopy theory does not always satisfy the desired properties (\cite{perouxshipley} points out some important issues with point-set models of coalgebras). Section 1 of \cite{aronechingclassification} for an interesting way to work around this without using $\infty$-categories). We avoid the issue completely and work with the $\infty$-category of coalgebras as described above. Section 1 of \cite{aronechingclassification} for provides another interesting way to construct a useful homotopy theory of coalgebras without using $\infty$-categories. 
% For example, inverting the weak equivalences in the category of coalgebras in spectra (for any of the standard models for those) does \emph{not} seem to give the $\infty$-category $\mathrm{coCAlg}(\mathrm{Sp})$.
\end{remark}

We say a coalgebra in $\mathrm{Sp}_{\mathbb{Q}}$ is $n$-connected if its underlying spectrum is $n$-connected. One half of Theorem \ref{thm:rationalhomotopy} may then be rephrased as follows:

\begin{theorem}[Quillen \cite{quillen}]
\label{thm:quillen2}
The functor $\widetilde{C}_{\mathbb{Q}}$ gives an equivalence of $\infty$-categories $\mathcal{S}_{\mathbb{Q}}^{\geq 2} \rightarrow \mathrm{coCAlg}^{\mathrm{nu}}(\mathrm{Sp}_{\mathbb{Q}})^{\geq 2}$.
\end{theorem}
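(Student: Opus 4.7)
The strategy is to present $\widetilde{C}_{\mathbb{Q}}$ as the comparison functor for a comonadic adjunction and then to identify the resulting comonad. The adjunction $\Sigma^\infty \dashv \Omega^\infty$ restricts to an adjunction $\Sigma^\infty_{\mathbb{Q}} \colon \mathcal{S}_{\mathbb{Q}}^{\geq 2} \rightleftarrows \mathrm{Sp}_{\mathbb{Q}}^{\geq 2} \colon \Omega^\infty_{\mathbb{Q}}$ (since $\Omega^\infty$ of a $2$-connected rational spectrum is a simply-connected rational space), yielding a comonad $T = \Sigma^\infty_{\mathbb{Q}} \Omega^\infty_{\mathbb{Q}}$ on $\mathrm{Sp}_{\mathbb{Q}}^{\geq 2}$. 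By the $\infty$-categorical Barr--Beck theorem (Theorem 4.7.3.5 of \cite{ha}), the comparison functor from $\mathcal{S}_{\mathbb{Q}}^{\geq 2}$ to the $\infty$-category of $T$-comodules is an equivalence provided $\Sigma^\infty_{\mathbb{Q}}$ is conservative and preserves totalizations of $\Sigma^\infty_{\mathbb{Q}}$-split cosimplicial objects.

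The first hypothesis is immediate from the definition of rational equivalence: a map between simply-connected rational spaces that induces an equivalence on rational chains is a rational equivalence and hence an equivalence. The second hypothesis is essentially formal, since the totalization of a split cosimplicial object coincides with its augmentation and is therefore preserved by any functor.

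The main task, and the principal obstacle, is to identify $T$ with the cofree non-counital commutative coalgebra comonad $\mathrm{Sym}^{\geq 1}(E) = \bigoplus_{n \geq 1} (E^{\otimes n})_{h\Sigma_n}$ on $\mathrm{Sp}_{\mathbb{Q}}^{\geq 2}$. On underlying endofunctors this is the rational Snaith-type splitting $\Sigma^\infty_{\mathbb{Q}} \Omega^\infty_{\mathbb{Q}} E \simeq \bigoplus_{n \geq 1} (E^{\otimes n})_{h\Sigma_n}$. It is crucial here that we work over $\mathbb{Q}$, so that homotopy orbits and homotopy fixed points by $\Sigma_n$ coincide, and the resulting object is simultaneously the free non-unital commutative algebra and the cofree non-counital commutative coalgebra on $E$. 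The hard part is to promote this termwise equivalence to a coherent equivalence of comonads, matching both the counit $T \to \mathrm{id}$ and the comultiplication $T \to T \circ T$; this requires naturality of the Snaith splittings in a sufficiently strong sense together with compatibility with the adjunction unit of $\Sigma^\infty \dashv \Omega^\infty$. Once this identification is in place, $T$-comodules are by definition objects of $\mathrm{coCAlg}^{\mathrm{nu}}(\mathrm{Sp}_{\mathbb{Q}})^{\geq 2}$ and the comparison functor is $\widetilde{C}_{\mathbb{Q}}$, completing the argument.
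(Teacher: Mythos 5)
Your overall strategy coincides with the paper's second proof of this theorem: exhibit $\widetilde{C}_{\mathbb{Q}}$ as the comparison functor for the comonadic adjunction $(\Sigma^\infty_{\mathbb{Q}}, \Omega^\infty)$ and then identify the comonad $\Sigma^\infty_{\mathbb{Q}}\Omega^\infty$ with the cofree non-counital commutative coalgebra comonad. However, there are two genuine gaps.

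First, your verification of the second Barr--Beck hypothesis is incorrect. The dual of condition (b) of Theorem \ref{thm:barrbeck} concerns cosimplicial objects $Y^\bullet$ in $\mathcal{S}_{\mathbb{Q}}^{\geq 2}$ whose image $\Sigma^\infty_{\mathbb{Q}}(Y^\bullet)$ is split; you must show that $\mathrm{Tot}(Y^\bullet)$ exists in $\mathcal{S}_{\mathbb{Q}}^{\geq 2}$ and is preserved by $\Sigma^\infty_{\mathbb{Q}}$. Your argument tacitly treats $Y^\bullet$ itself as split, but $\Sigma^\infty_{\mathbb{Q}}$-splitness does not lift to a splitting of $Y^\bullet$, and $\Sigma^\infty_{\mathbb{Q}}$ certainly does not preserve arbitrary totalizations. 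This is precisely where the non-trivial content lies. The paper instead invokes (the dual of) Lemma \ref{lem:monadicitycriterion} and verifies comonadicity by showing that the natural map $X \to \mathrm{Tot}\bigl((\Omega^\infty\Sigma^\infty_{\mathbb{Q}})^{\bullet+1} X\bigr)$ is an equivalence: for Eilenberg--MacLane spaces the cosimplicial object has a contracting extra codegeneracy, and the general case follows by induction along the Postnikov tower.

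Second, you correctly identify the comonad identification as the central issue but offer no workable route through it. Attempting to "promote" the termwise Snaith-type splitting $\Sigma^\infty_{\mathbb{Q}}\Omega^\infty E \simeq \bigoplus_n (E^{\otimes n})_{h\Sigma_n}$ to a coherent equivalence of comonads would require verifying an infinite hierarchy of compatibilities and is not how one should proceed. The paper instead constructs the map of comonads directly: since $\Sigma^\infty_{\mathbb{Q}}$ takes values in non-counital commutative coalgebras, the universal property of the cofree coalgebra produces a canonical natural transformation $\gamma\colon \Sigma^\infty_{\mathbb{Q}}\Omega^\infty \to \mathrm{cofree}$ (equivalently, a functor $\mathrm{coAlg}_{\Sigma^\infty_{\mathbb{Q}}\Omega^\infty}(\mathrm{Sp}_{\mathbb{Q}}) \to \mathrm{coCAlg}^{\mathrm{nu}}(\mathrm{Sp}_{\mathbb{Q}})$ over $\mathrm{Sp}_{\mathbb{Q}}$), with no coherence to check by hand. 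One then verifies that $\gamma$ is an equivalence of underlying functors on connected spectra by comparing Goodwillie layers, using $D_n(\Sigma^\infty_{\mathbb{Q}}\Omega^\infty)(E) \simeq (E^{\otimes n})_{h\Sigma_n}$ together with convergence of the tower on connected objects. With these two corrections your proof would become essentially the paper's.
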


We will describe two proofs of this theorem; the first is close to the traditional proofs, the second uses Goodwillie calculus and does not appear to be in the literature.

\begin{proof}[Sketch of first proof of Theorem \ref{thm:quillen2}]
To check that $\widetilde{C}_{\mathbb{Q}}$ is fully faithful, first note that it admits a right adjoint $\mathrm{Map}_{\mathrm{coCAlg}}(H\mathbb{Q}, -)$, where $H\mathbb{Q}$ is equipped with the trivial coalgebra structure. We will argue that for any $X \in \mathcal{S}_{\mathbb{Q}}^{\geq 2}$ the unit map of this adjunction
\begin{equation*}
X \rightarrow \mathrm{Map}_{\mathrm{coCAlg}}(H\mathbb{Q}, \widetilde{C}_{\mathbb{Q}} X)
\end{equation*}
is an equivalence. Here and above, the mapping spaces refer to those in the $\infty$-category $\mathrm{coCAlg}^{\mathrm{nu}}(\mathrm{Sp}_{\mathbb{Q}})$. 

We work by induction on the Postnikov tower of $X$. Write $\tau_{\leq n} X$ for the $n$th Postnikov section. Since $X$ is simply-connected, its Postnikov tower consists of principal fibrations of the form
\begin{equation*}
\tau_{\leq n} X \rightarrow \tau_{\leq n-1} X \rightarrow K(\pi_n X, n+1).
\end{equation*}
The convergence of the homological Eilenberg--Moore spectral sequence for this fibration implies that $\widetilde{C}_{\mathbb{Q}}$ sends this fiber sequence to a fiber sequence of coalgebras. The reader might be more familiar with the dual statement that the cohomological version of this spectral sequence gives, for a suitable fibration $F \rightarrow E \rightarrow B$, an equivalence of commutative $H\mathbb{Q}$-algebras
\begin{equation*}
H\mathbb{Q} \otimes_{C^*(B; \mathbb{Q})} C^*(E; \mathbb{Q}) \rightarrow C^*(F; \mathbb{Q}),
\end{equation*}
but working with cochains would require us to introduce finite type hypotheses. In any case, we may now use induction to reduce to the case where $X$ is an Eilenberg--MacLane space $K(V,n)$ for $V$ a rational vector space. Then $\widetilde{C}_{\mathbb{Q}}(K(V,n))$ is the cofree commutative coalgebra (without counit) generated by $\Sigma^n HV$ (see Remark \ref{rmk:cofree}), the latter denoting the rational spectrum whose single nonvanishing homotopy group is $V$ in dimension $n$. But clearly
\begin{equation*}
\mathrm{Map}_{\mathrm{coCAlg}}(H\mathbb{Q}, \mathrm{cofree}(\Sigma^n HV)) \simeq \mathrm{Map}_{\mathrm{Sp}_{\mathbb{Q}}}(H\mathbb{Q}, \Sigma^n HV) \simeq K(V,n),
\end{equation*}
so that $\widetilde{C}_{\mathbb{Q}}$ is indeed fully faithful.

To see it is essentially surjective, one observes that $\mathrm{coCAlg}^{\mathrm{nu}}(\mathrm{Sp}_{\mathbb{Q}})^{\geq 2}$ is generated under colimits by a trivial coalgebra $C_2$ on a single generator in degree 2; indeed, any simply-connected coalgebra can be built from `cells' (where in this case cell means trivial coalgebra on a generator in degree $\geq 2$) in the same way that a space can be built from spheres. Since $\widetilde{C}_{\mathbb{Q}}$ preserves colimits and $C_2$ is the image of the rational 2-sphere $S^2_{\mathbb{Q}}$, this completes the argument.
\end{proof}

\begin{remark}
\label{rmk:cofree}
\index{coalgeba, cofree}
In general cofree coalgebras are rather hard to understand, in contrast to the case of free algebras (for which there is a simple formula). The difference can be traced back to the fact that the smash product of spectra or tensor product of chain complexes commutes with colimits in each variable separately, but not with limits. However, for degree reasons the cofree commutative coalgebra on a connected rational spectrum $E$ is described by the `naive' formula
\begin{equation*}
\mathrm{cofree}(E) \cong \bigvee_{k \geq 1} (E^{\otimes k})^{h\Sigma_k}.
\end{equation*}
Note that since we work over the rational numbers, it is not important whether we use homotopy fixed points or orbits on the right-hand side. Also note that if $E = \Sigma^n HV$ for a rational vector space $V$, this formula describes precisely the rational homology of $K(V,n)$. This fact was used in the argument above.
% We will often abbreviate the right-hand side by $\mathrm{Sym}^{\geq 1}(V[n])$.
\end{remark}

\begin{proof}[Sketch of second proof of Theorem \ref{thm:quillen2}]
We now outline an alternative argument using Goodwillie calculus and some ideas from the theory of descent (or comonadicity). We refer to the chapter of Arone--Ching in this volume for background on Goodwillie calculus. A short reminder on monads and comonads can be found in Section \ref{sec:monads}. The following sets the tone for some of the techniques we will employ in Section \ref{sec:Liealgebras}.

The adjoint pair $(\Sigma^\infty_{\mathbb{Q}}, \Omega^\infty)$ between $\mathrm{S}^{\geq 2}_{\mathbb{Q}}$ and $\mathrm{Sp}_{\mathbb{Q}}$ in particular gives a comonad $\Sigma^\infty_{\mathbb{Q}}\Omega^\infty$ on $\mathrm{Sp}_{\mathbb{Q}}$, and $\Sigma^\infty_{\mathbb{Q}}$ gives a comparison functor
\begin{equation*}
\mathcal{S}_{\mathbb{Q}}^{\geq 2} \rightarrow \mathrm{coAlg}_{\Sigma^\infty_{\mathbb{Q}}\Omega^\infty}(\mathrm{Sp}_{\mathbb{Q}}).
\end{equation*}
The right-hand side denotes the $\infty$-category of \emph{coalgebras} (sometimes also called \emph{left comodules}) for the comonad $\Sigma^\infty_{\mathbb{Q}}\Omega^\infty$. In fact, this comparison functor is an equivalence; one says that the adjunction is \emph{comonadic}. \index{comonadic adjunction} This follows immediately (by the dual of Lemma \ref{lem:monadicitycriterion} below) from the fact that for any $X \in \mathcal{S}_{\mathbb{Q}}^{\geq 2}$, the limit of the cosimplicial object $(\Omega^\infty \Sigma^\infty_{\mathbb{Q}})^{\bullet +1} X$ (which may be thought of as the Bousfield--Kan $H\mathbb{Q}$-resolution) recovers $X$, meaning that the natural map
\[
\begin{tikzcd}
X \ar{r} & \mathrm{Tot}\bigl(\Omega^\infty\Sigma^\infty_\mathbb{Q} X \ar[shift left]{r} \ar[shift right]{r} & (\Omega^\infty\Sigma^\infty_\mathbb{Q})^2 X \ar{l} \ar{r} \ar[shift left = 2]{r} \ar[shift right = 2]{r} & \cdots \ar[shift right]{l} \ar[shift left]{l} \bigr)
\end{tikzcd}
\]
is an equivalence. Indeed, for an Eilenberg-MacLane space $X = K(V,n)$, the fact that $X = \Omega^\infty \Sigma^n HV$ implies that this cosimplicial object admits an extra codegeneracy (sometimes called a contracting codegeneracy). For general simply-connected $X$ the conclusion follows from induction on its Postnikov tower.

\index{coalgebra, cofree}
The forgetful-cofree adjoint pair between $\mathrm{coCAlg}^{\mathrm{nu}}(\mathrm{Sp}_{\mathbb{Q}})^{\geq 2}$ and $\mathrm{Sp}_{\mathbb{Q}}$ is comonadic as well, the relevant comonad in this case being the cofree coalgebra functor described in Remark \ref{rmk:cofree}. The universal property of the cofree coalgebra and the fact that the functor $\Sigma^\infty_{\mathbb{Q}}$ takes values in commutative coalgebras imply that the counit $\varepsilon\colon \Sigma^\infty_{\mathbb{Q}}\Omega^\infty \rightarrow \mathrm{id}_{\mathrm{Sp}_{\mathbb{Q}}}$ induces a natural transformation 
\begin{equation*}
\gamma \colon \Sigma^\infty_{\mathbb{Q}}\Omega^\infty \rightarrow \mathrm{cofree}.
\end{equation*}
To prove the theorem it suffices to show that $\gamma$ is an equivalence. This follows easily from the well-known calculation of the homogeneous layers (or derivatives) of $\Sigma^\infty_{\mathbb{Q}}\Omega^\infty$, which states
\begin{equation*}
D_n(\Sigma^\infty_{\mathbb{Q}}\Omega^\infty)(E) \simeq (E^{\otimes n})_{h\Sigma_n},
\end{equation*}
together with the fact that the Goodwillie tower of $\Sigma^\infty_{\mathbb{Q}}\Omega^\infty$ converges on connected spectra (see for example Section 3.2 of \cite{kuhntate} and Example 2.14 of \cite{aroneching}). Alternatively, the fact that $\gamma$ is an equivalence is easily deduced from Theorem 2.28 of \cite{kuhngoodwillie}. 

\end{proof}

\begin{remark}
\label{rmk:Tatevanishing}
In our argument involving homogeneous functors we tacitly used again that $(E^{\otimes n})^{h\Sigma_n} \simeq (E^{\otimes n})_{h\Sigma_n}$ in the $\infty$-category $\mathrm{Sp}_{\mathbb{Q}}$ of rational spectra. This equivalence between fixed points and orbits will turn out to be a crucial feature when establishing variations on rational homotopy theory later on.
% reference Tate vanishing, perhaps bring out more clearly at some point 
\end{remark}

\begin{remark}
\label{rmk:SigmaOmega}
The argument above works just as well to prove the integral statement
\begin{equation*}
\mathcal{S}_*^{\geq 2} \simeq \mathrm{coAlg}_{\Sigma^\infty\Omega^\infty}(\mathrm{Sp})^{\geq 2}.
\end{equation*}
It is only in order to explicitly identify the comonad $\Sigma^\infty_{\mathbb{Q}}\Omega^\infty$ as the cofree commutative coalgebra that we used we are working over the rational numbers.
\end{remark}

\begin{remark}
\label{rmk:minimalmodels}
So far we have taken a rather abstract perspective. However, much of the power of rational homotopy theory derives from its computability. The linear (or Spanier-Whitehead) dual of the functor $C_{\mathbb{Q}}$ gives a functor from $\mathcal{S}_{\mathbb{Q}}^{\geq 2}$ to the $\infty$-category of commutative $H\mathbb{Q}$-algebras or, equivalently, the $\infty$-category of rational commutative differential graded algebras (cdga's). The latter functor can be described explicitly using Sullivan's functor $A_{\mathrm{PL}}$ of \emph{polynomial differential forms} \cite{sullivan}. Moreover, every cdga arising in this way is equivalent to a \emph{minimal} cdga $A$, which is one whose underlying graded commutative algebra is free and for which the differential takes values in decomposable elements. Two such minimal cdga's are equivalent if and only if they are isomorphic, making the theory very rigid. For a rational space $X$, a minimal replacement of $A_{\mathrm{PL}}(X)$ is called a \emph{minimal model} for $X$. These minimal models are a very powerful tool; for example, the rational homotopy groups of a space $X$ of finite type can be computed directly as the linear dual of the indecomposables of the corresponding minimal model, with the Whitehead bracket determined by the differential via a simple procedure.
\end{remark}

\index{differential graded Lie algebra}
The other half of Theorem \ref{thm:rationalhomotopy} concerns differential graded Lie algebras and arises as follows. If $\mathfrak{g}_*$ is a differential graded Lie algebra, then one can associate to it the (reduced) Chevalley--Eilenberg complex $\mathrm{CE}(\mathfrak{g}_*)$ computing its Lie algebra homology. The underlying graded vector space of $\mathrm{CE}(\mathfrak{g}_*)$ is
\begin{equation*}
\mathrm{Sym}^{\geq 1}(\mathfrak{g}_*[1]) := \bigoplus_{n \geq 1} \mathrm{Sym}^n(\mathfrak{g}_*[1]),
\end{equation*}
where $\mathfrak{g}_*[1]$ is the shift defined by $\mathfrak{g}_i[1] := \mathfrak{g}_{i-1}$. Here $\mathrm{Sym}^n(V)$ is the $n$th symmetric power $(V^{\otimes n})_{\Sigma_n}$ in the graded sense, i.e., it behaves like the exterior power on odd degree elements. The differential $d_{\mathrm{CE}}$ is constructed from the differential $d$ of $\mathfrak{g}$ and the Lie bracket (see Appendix B.6 of \cite{quillen}). In fact, $\mathrm{Sym}^{\geq 1}(\mathfrak{g}_*[1])$ carries an evident coalgebra structure that is compatible with the differential just defined, in which the primitive elements are precisely $\mathfrak{g}_*[1] \subseteq \mathrm{Sym}^{\geq 1}(\mathfrak{g}_*[1])$. (An element $x$ of a coalgebra without counit is primitive if $\Delta x = 0$.) This makes $\mathrm{CE}(\mathfrak{g}_*)$ a differential graded commutative coalgebra without counit. It is not hard to verify that the functor $\mathrm{CE}$ preserves quasi-isomorphisms of differential graded Lie algebras. The following then establishes the remaining half of Theorem \ref{thm:rationalhomotopy}:

%The differential $d_{\mathrm{CE}}$ is constructed from the differential $d$ of $\mathfrak{g}$ and the Lie bracket:
%\begin{eqnarray*}
%d_{\mathrm{CE}}(g_1 \cdots g_n) & := & \sum_{i=0}^n (-1)^{\varepsilon_0} g_1 \cdots g_{i-1} (dg_i) g_{i+1} \cdots g_n +  \\
%&& \sum_{0 \leq i < j \leq n} (-1)^{\varepsilon_1} g_1 \cdots g_{i-1}g_{i+1}\cdots g_{j-1}[g_i,g_j]g_{j+1} \cdots g_n.
%\end{eqnarray*}
%The signs $\varepsilon_0$ and $\varepsilon_1$ are determined by the usual Koszul sign rule:
%\begin{equation*}
%\varepsilon_0 = d_1 + \cdots + d_{i-1}, \quad \varepsilon_1  = d_i(d_{i+1} + \cdots + d_{j-1}).
%\end{equation*}
%Here $d_i$ denotes the degree of $g_i$ as an element of $\mathfrak{g}_*[1]$. 

\begin{theorem}[Quillen \cite{quillen}]
\label{thm:CE}
The functor
\begin{equation*}
\mathrm{CE} \colon \mathrm{Lie}(\mathrm{Ch}_{\mathbb{Q}})^{\geq 1} \rightarrow \mathrm{coCAlg}^{\mathrm{nu}}(\mathrm{Ch}_{\mathbb{Q}})^{\geq 2}
\end{equation*}
is an equivalence of $\infty$-categories.
\end{theorem}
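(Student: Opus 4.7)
The plan is to construct a left adjoint $\mathcal{L}$ to $\mathrm{CE}$ and then verify that the unit and counit of the resulting adjunction are equivalences by reducing to a set of generators. For a simply-connected non-unital commutative coalgebra $C$, define $\mathcal{L}(C)$ to be the free graded Lie algebra $\mathrm{FreeLie}(C[-1])$, endowed with the unique derivation extending $d_C[-1]$ plus the bracket-valued map determined, after the shift, by the reduced coproduct $\bar{\Delta}\colon C \to C \otimes C$. The adjunction $\mathcal{L} \dashv \mathrm{CE}$ is then formal, flowing from the universal properties of the free Lie algebra and of the cofree commutative coalgebra on a chain complex (the latter being the underlying graded object of $\mathrm{CE}$).

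Next I would reduce to generators. Exactly as in the first proof of Theorem~\ref{thm:quillen2}, the $\infty$-category $\mathrm{coCAlg}^{\mathrm{nu}}(\mathrm{Ch}_{\mathbb{Q}})^{\geq 2}$ is generated under colimits by the trivial coalgebras $C_n$ on $\mathbb{Q}$ in degrees $n\geq 2$. A direct inspection shows $\mathcal{L}(C_n) \simeq \mathrm{FreeLie}(\mathbb{Q}[n-1])$, and since $\mathcal{L}$ preserves colimits, it therefore suffices to verify the counit $\mathrm{CE}(\mathrm{FreeLie}(\mathbb{Q}[n-1])) \to C_n$ is an equivalence in order to deduce essential surjectivity of $\mathrm{CE}$. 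Symmetrically, the unit $\mathfrak{g} \to \mathcal{L}(\mathrm{CE}(\mathfrak{g}))$ only needs to be checked on free Lie algebras, reducing to the same calculation; a standard argument using that both sides preserve sifted colimits and that free Lie algebras generate $\mathrm{Lie}(\mathrm{Ch}_{\mathbb{Q}})^{\geq 1}$ under such then yields full faithfulness.

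The single key input is the Koszul duality between the operads $\mathrm{Com}$ and $\mathrm{Lie}$, which produces a quasi-isomorphism of coalgebras
\begin{equation*}
\mathrm{CE}(\mathrm{FreeLie}(V)) \xrightarrow{\;\sim\;} V[1]
\end{equation*}
(with trivial coalgebra structure on the right) for any rational chain complex $V$ concentrated in degrees $\geq 1$. Concretely, $\mathrm{CE}(\mathrm{FreeLie}(V))$ carries a weight filtration indexed by the number of generators from $V$; the associated graded in weight $n$ is (up to a shift) the Koszul complex of the pair $(\mathrm{Com},\mathrm{Lie})$ applied to $V^{\otimes n}$, which is acyclic for $n\geq 2$ and identifies with $V$ for $n=1$. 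The simply-connectedness assumption guarantees that the symmetric powers $\mathrm{Sym}^n(\mathfrak{g}[1])$ become increasingly connected, making the weight filtration convergent.

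The main technical obstacle is propagating this cellular computation to arbitrary simply-connected objects: one must verify that the relevant $\infty$-categories are presented by the asserted cell structures and that the cobar construction $\mathcal{L}$ is well-behaved under the pushouts along cell attachments that build arbitrary objects from these generators. The rationality plays a crucial role, ensuring (via the vanishing of Tate constructions as in Remark~\ref{rmk:Tatevanishing}) that the symmetric-power functor $\mathrm{Sym}^n$ appearing in $\mathrm{CE}$ already computes its derived version, so that the dg-level equivalence automatically upgrades to an $\infty$-categorical one and no further cofibrant replacement issues intervene.
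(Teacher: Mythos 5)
Your overall strategy is sound and is in fact close to Quillen's original argument: construct an explicit chain-level cobar functor $\mathcal{L}$, form the bar--cobar adjunction, reduce to generators, and invoke operadic Koszul duality (the acyclicity of the Harrison/Koszul complex) as the sole computational input. This is a genuinely different route from the paper, which does not prove the theorem directly but instead deduces it from the general Ching--Harper result (Theorem~\ref{thm:chingharper}), a connectivity-based convergence argument for the bar--cobar adjunction over any connective operad. Your approach is closer to the first proof of Theorem~\ref{thm:quillen2} in Section~\ref{sec:rationalhomotopy}; it is more hands-on and self-contained, whereas the paper's route subsumes the statement into a broad operadic framework that applies uniformly (including in the spectral setting used later). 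Your remark that rationality, via the vanishing of Tate constructions (Remark~\ref{rmk:Tatevanishing}), is what lets the dg-level formula $\mathrm{Sym}^{\geq 1}(\mathfrak{g}[1])$ already compute the derived functor is exactly the right thing to emphasize and is the reason both proofs are available here.

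There is however a real directional error in the writeup, and it is worth fixing carefully because the natural transformations that exist a priori only go one way. You correctly identify $\mathcal{L}(C) = \mathrm{FreeLie}(C[-1])$ as the \emph{left} adjoint to $\mathrm{CE}$ (this is Quillen's adjunction; both sides of $\mathrm{Hom}_{\mathrm{Lie}}(\mathcal{L}(C),\mathfrak{g})$ and $\mathrm{Hom}_{\mathrm{coAlg}}(C,\mathrm{CE}(\mathfrak{g}))$ parametrize twisting morphisms / Maurer--Cartan elements). But then the unit of this adjunction must go $C \to \mathrm{CE}\,\mathcal{L}(C)$ and the counit must go $\mathcal{L}\,\mathrm{CE}(\mathfrak{g}) \to \mathfrak{g}$. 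Your proposal instead refers to a ``counit $\mathrm{CE}(\mathrm{FreeLie}(\mathbb{Q}[n-1])) \to C_n$'' and a ``unit $\mathfrak{g} \to \mathcal{L}(\mathrm{CE}(\mathfrak{g}))$''; these are the wrong names and, more importantly, the wrong directions --- no canonical map exists in those directions before the theorem is proved. (The maps you wrote down \emph{are} the unit and counit of the other, Francis--Gaitsgory-style adjunction in which $\mathrm{CE} = B_{\mathbf{Lie}}$ is the left adjoint and derived primitives the right; this is the framework the paper uses via Theorem~\ref{thm:koszul1}. But it is not the adjunction you set up, and conflating the two is what causes the sign slip.) Once the directions are corrected the argument goes through: the unit on the generator $C_n$ is the map $\mathbb{Q}[n] \to \mathrm{CE}(\mathrm{FreeLie}(\mathbb{Q}[n-1]))$, which your weight-filtration computation identifies as an equivalence, and the counit on free Lie algebras follows from the same computation together with the fact that $\mathcal{L}$ preserves quasi-isomorphisms of simply-connected coalgebras. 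The colimit-reduction step is then legitimate since $\mathrm{CE}$ preserves colimits (it is the stabilization $\mathrm{TAQ}_{\mathbf{Lie}}$, Theorem~\ref{thm:basterramandell}) and $\mathcal{L}$ preserves colimits as a left adjoint, while free Lie algebras generate $\mathrm{Lie}(\mathrm{Ch}_{\mathbb{Q}})^{\geq 1}$ under geometric realizations via the bar resolution.
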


We will place this theorem in a more general context in Section \ref{sec:koszul}. The inverse functor takes the derived primitives of a commutative coalgebra (shifted down in degree by 1). We conclude this section by highlighting one observation about Theorem \ref{thm:rationalhomotopy}. Let us write
\begin{equation*}
\Phi_0 \colon \mathcal{S}_{\mathbb{Q}}^{\geq 2} \rightarrow \mathrm{Sp}_{\mathbb{Q}}^{\geq 1}
\end{equation*}
for the composition of the functor $L_{\mathbb{Q}}$ with the forgetful functor from differential graded Lie algebras to $\mathrm{Sp}_{\mathbb{Q}}$ (which we have identified with $\mathrm{Ch}_{\mathbb{Q}}$). Also, write $\Theta_0$ for its left adjoint, which is the composition of the free Lie algebra functor with the inverse of the equivalence $L_{\mathbb{Q}}$. The reason for this seemingly strange notation is an analogy with the Bousfield--Kuhn functor\index{Bousfield--Kuhn functor}, which we discuss in Section \ref{sec:periodicity}. We now have two different adjunctions between the $\infty$-categories of rational spaces and of rational spectra, summarized in the following diagram (left adjoints on top):
\[
\begin{tikzcd}
\mathrm{Sp}_{\mathbb{Q}}^{\geq 1} \ar[shift left]{r}{\Theta_0} & \mathcal{S}_{\mathbb{Q}}^{\geq 2} \ar[shift left]{l}{\Phi_0} \ar[shift left]{r}{\Sigma^\infty_{\mathbb{Q}}} & \mathrm{Sp}_{\mathbb{Q}}^{\geq 2} \ar[shift left]{l}{\Omega^\infty}.
\end{tikzcd}
\]
The horizontal composition from right to left computes the derived primitives of the cofree coalgebra, shifted down by one. In fact it is rather easy to see that for a rational vector space $V$, the primitives of $\mathrm{cofree}(V)$ are precisely $V$. From this one can deduce that $\Phi_0 \Omega^\infty$ is precisely the functor $\Sigma^{-1}$ that shifts down by one. Consequently, the composition of left adjoints $\Sigma^\infty_{\mathbb{Q}}\Theta_0$ is $\Sigma$, the functor shifting up by one. Alternatively, one can verify directly that the Chevalley--Eilenberg homology of a free Lie algebra on a chain complex $V$ is quasi-isomorphic to $V[1]$. Variations on these observations will play an important role in Sections \ref{sec:periodicity} and \ref{sec:Liealgebras}.

\section{Monads and their algebras}
\label{sec:monads}

\index{monad}
As shown by the second proof of Theorem \ref{thm:quillen2}, the yoga of monads and comonads can be very useful when attempting to `model' a given $\infty$-category $\mathcal{C}$ by some homotopy theory of algebras or coalgebras. We already used it to identify the $\infty$-category of simply-connected pointed spaces with that of coalgebras for the comonad $\Sigma^\infty\Omega^\infty$. Later on, we will relate localizations of the $\infty$-category of spaces to algebras over the spectral Lie operad using similar techniques. To help the reader we offer this short section, which collects some basic facts about (co)monads and the associated (co)simplicial objects. We will make systematic use of these throughout the rest of this survey. More background can be found in Section 4.7 of \cite{ha} and in \cite{riehlverity}.

Let $\mathcal{C}$ be an $\infty$-category. Then the $\infty$-category $\mathrm{Fun}(\mathcal{C},\mathcal{C})$ admits a monoidal structure, with tensor product given by composition of functors. A \emph{monad} (resp. \emph{comonad}) is a monoid (resp. comonoid) in this monoidal $\infty$-category. If
\[
\begin{tikzcd}
\mathcal{C} \ar[shift left]{r}{F} & \mathcal{D} \ar[shift left]{l}{G}
\end{tikzcd}
\]
is an adjoint pair, with $F$ left adjoint, then the composite $GF$ gives a monad on $\mathcal{C}$ (and dually the functor $FG$ gives a comonad on $\mathcal{D}$). Writing $\eta: \mathrm{id}_{\mathcal{C}} \rightarrow GF$ for the unit and $\varepsilon: FG \rightarrow \mathrm{id}_{\mathcal{D}}$ for the counit of the adjunction, the multiplication and unit maps of $GF$ are given by
\begin{equation*}
GFGF \xrightarrow{G\varepsilon F} GF \quad\quad \text{and} \quad\quad \mathrm{id}_{\mathcal{C}} \xrightarrow{\eta} GF.
\end{equation*}
In ordinary category theory these maps would then have to satisfy an associativity and a unitality condition. In the setting of $\infty$-categories, these constraints become extra data rather than properties; a monoid object can be encoded by a certain diagram of shape $\mathbf{\Delta}^{\mathrm{op}}$, cf. Section 4.1 of \cite{ha}.

A typical example to have in mind is the one where $\mathcal{D}$ is the $\infty$-category of commutative algebras in $\mathcal{C}$ (and we assume $\mathcal{C}$ to be a sufficiently nice symmetric monoidal $\infty$-category), with $G$ the forgetful functor and $F$ the free commutative algebra functor. The monad $GF$ then assigns to an object $X$ of $\mathcal{C}$ the underlying object of the free commutative algebra on $X$. An important example of a comonad, already used in the previous section, is the functor $\Sigma^\infty\Omega^\infty$ on the $\infty$-category of spectra.

\index{algebra for a monad}
If $T$ is a monad on an $\infty$-category $\mathcal{C}$, one can speak of \emph{$T$-algebras} in $\mathcal{C}$. Such an algebra is an object $X$ which is first of all equipped with an `action'
\begin{equation*}
\mu\colon TX \rightarrow X.
\end{equation*}
Again, in ordinary category this map would then have to satisfy an associativity condition (with respect to the multiplication of $T$) and a unitality condition (using the unit map of $T$). Working with $\infty$-categories means one has to encode these constraints in a coherent way; we refer to Section 4.2 of \cite{ha} for details. One important point for us is that any $T$-algebra $X$ in particular gives rise to an augmented simplicial object in the $\infty$-category of $T$-algebras of the form
\[
\begin{tikzcd}
\cdots \ar{r} \ar[shift left = 2]{r} \ar[shift right = 2]{r} & T^2 X \ar[shift right]{l} \ar[shift left]{l} \ar[shift left]{r} \ar[shift right]{r} & TX \ar{r} \ar{l} & X.
\end{tikzcd}
\]
The degeneracy maps use the unit of $T$, while the face maps describe the monoid multiplication of $T$ and the action map $TX \rightarrow X$. This augmented simplicial object is canonically contractible when considered as a diagram in the underlying $\infty$-category $\mathcal{C}$. Indeed it admits `extra degeneracies' $X \rightarrow TX$, and similarly $T^{\bullet} X \rightarrow T^{\bullet +1} X$, given by the unit of $T$. It follows that the diagram above expresses $X$ as the colimit of the simplicial object $T^{\bullet +1} X$, thus describing $X$ as a colimit of a diagram of \emph{free} $T$-algebras. We write $\mathrm{Alg}_T(\mathcal{C})$ for the $\infty$-category of $T$-algebras in $\mathcal{C}$. Dually, if $Q$ is a comonad on $\mathcal{C}$, we write $\mathrm{coAlg}_Q(\mathcal{C})$ for the $\infty$-category of $Q$-coalgebras.

\begin{remark}
\label{rmk:leftTmodule}
The definition of a $T$-algebra is really a special instance of the definition of a left module $M$ in an $\infty$-category $\mathcal{C}$ for an associative algebra $A$ in some monoidal $\infty$-category $\mathcal{D}$ that acts on $\mathcal{C}$. In our case $T$ plays the role of $A$ and $\mathcal{D} = \mathrm{Fun}(\mathcal{C},\mathcal{C})$. In fact, this interpretation as modules also inspires some notation which will be useful. For an associative algebra $A$ with a right module $M$ and left module $N$ one can form the \emph{two-sided bar construction}, which is the simplicial object
\[
\begin{tikzcd}
\cdots M \otimes A^{\otimes 2} \otimes N \ar{r} \ar[shift left = 2]{r} \ar[shift right = 2]{r} & M \otimes A \otimes N \ar[shift right]{l} \ar[shift left]{l} \ar[shift left]{r} \ar[shift right]{r} & M \otimes N \ar{l}
\end{tikzcd}
\]
for which we write $\mathrm{Bar}(M, A, N)_{\bullet} := M \otimes A^{\otimes \bullet} \otimes N$. The face maps use the action of $A$ on $M$ and $N$ for the outer faces and the multiplication of $A$ for the inner faces. The colimit of $\mathrm{Bar}(M, A, N)_{\bullet}$ computes the \emph{relative tensor product} $M \otimes_A N$. With this notation, the simplicial object we used above the remark can be written $\mathrm{Bar}(T,T,X)_\bullet$. \index{bar construction}
\end{remark}

For any adjoint pair
\[
\begin{tikzcd}
\mathcal{C} \ar[shift left]{r}{F} & \mathcal{D} \ar[shift left]{l}{G},
\end{tikzcd}
\]
giving a monad $T := GF$ on $\mathcal{C}$, the functor $G$ can canonically be factored as
\begin{equation*}
\mathcal{D} \xrightarrow{\varphi} \mathrm{Alg}_T(\mathcal{C}) \xrightarrow{\mathrm{forget}} \mathcal{C}.
\end{equation*}
Indeed, for $X \in \mathcal{D}$ the object $G(X)$ has a natural $T$-algebra structure with action map given by
\begin{equation*}
TG(X) = GFG(X) \xrightarrow{G\varepsilon} G(X).
\end{equation*}
One says that the adjoint pair $(F,G)$ is \emph{monadic} if the comparison functor $\varphi: \mathcal{D} \rightarrow \mathrm{Alg}_T(\mathcal{C})$ is an equivalence of $\infty$-categories. The Barr--Beck theorem is a surprisingly useful recognition criterion for monadic adjunctions, generalized to higher category theory by Lurie \cite{ha} (see \cite{riehlverity} for an alternative approach):
\index{monadic adjunction}

\index{monadicity}
\begin{theorem}[Barr--Beck, Lurie]
\label{thm:barrbeck}
An adjoint pair $F: \mathcal{C} \rightleftarrows \mathcal{D}: G$ is monadic if and only if the following two conditions are satisfied:
\begin{itemize}
\item[(a)] The functor $G$ is \emph{conservative}, i.e., a morphism $f$ in $\mathcal{D}$ is an equivalence if and only if $G(f)$ is an equivalence.
\item[(b)] If $X_{\bullet}$ is a simplicial object in $\mathcal{D}$ such that $G(X_{\bullet})$ is split (i.e., admits an augmentation with extra degeneracies), then the colimit of $X_{\bullet}$ exists in $\mathcal{D}$ and is preserved by $G$. 
\end{itemize}
\end{theorem}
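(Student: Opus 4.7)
The plan is the standard Barr--Beck one: necessity is a quick unwinding of definitions, while for sufficiency I would exhibit a left adjoint $L$ to the comparison functor $\varphi\colon \mathcal{D} \to \mathrm{Alg}_T(\mathcal{C})$ (with $T := GF$) via the monadic bar resolution, and then verify that its unit and counit are equivalences using exactly conditions (a) and (b).

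For necessity, if $\varphi$ is an equivalence then $G$ is identified with the forgetful functor $U\colon \mathrm{Alg}_T(\mathcal{C}) \to \mathcal{C}$. Conservativity of $U$ is immediate because a map of $T$-algebras is an equivalence iff its underlying map is. For (b), a $U$-split simplicial object $Y_\bullet$ in $\mathrm{Alg}_T(\mathcal{C})$ is one whose underlying simplicial object in $\mathcal{C}$ is split; splittings are absolute colimits, so the augmentation $Y_{-1}$ witnessed in $\mathcal{C}$ lifts uniquely to a colimit of $Y_\bullet$ in $\mathrm{Alg}_T(\mathcal{C})$ which $U$ manifestly preserves.

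For sufficiency, assume (a) and (b). Given a $T$-algebra $(X,\mu)$, I would consider the bar resolution $\mathrm{Bar}(T,T,X)_\bullet = T^{\bullet+1} X$ of Remark \ref{rmk:leftTmodule} and lift it to a simplicial object $F T^\bullet X$ in $\mathcal{D}$ with $G(FT^n X) = T^{n+1} X$; the face and degeneracy data are assembled from the unit $\eta$, the monad multiplication, the action $\mu$, and the counit $\varepsilon\colon FG \to \mathrm{id}_{\mathcal{D}}$. Because the underlying bar resolution in $\mathcal{C}$ is split via $\eta$ with augmentation $X$, hypothesis (b) ensures that
\[
L(X,\mu) := \mathrm{colim}_{\bullet}\, F T^\bullet X
\]
exists in $\mathcal{D}$ and that $GL(X,\mu) \simeq X$. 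The adjunction $L \dashv \varphi$ then follows from this colimit presentation together with $F \dashv G$. To check the unit $X \to \varphi L(X,\mu)$ is an equivalence in $\mathrm{Alg}_T(\mathcal{C})$, I apply the conservative forgetful functor $U$: the resulting map $X \to GL(X,\mu)$ is the equivalence just exhibited. To check the counit $L\varphi Y \to Y$ is an equivalence in $\mathcal{D}$, I apply $G$ and recognise the result as the augmentation of the split bar resolution $(GF)^{\bullet+1} Y \to GY$, hence an equivalence; condition (a) then upgrades this to an equivalence in $\mathcal{D}$.

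The main obstacle is $\infty$-categorical coherence. In a 1-category the construction of $L$ and the triangle identities reduce to drawing a few commutative diagrams, but here the bar resolution must be produced as an $\infty$-functor $\mathrm{Alg}_T(\mathcal{C}) \to \mathrm{Fun}(\mathbf{\Delta}^{\mathrm{op}}, \mathcal{D})$, the adjunction $L \dashv \varphi$ must be specified as a coherent piece of data, and each equivalence above must track arbitrarily high homotopies. This bookkeeping is precisely the content of Lurie's treatment in Section 4.7 of \cite{ha}, which assembles the bar resolution using the $\infty$-categorical theory of module objects and exploits the absolute nature of split colimits to isolate (a) and (b) as exactly the right hypotheses.
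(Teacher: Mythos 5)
The paper does not actually prove Theorem \ref{thm:barrbeck}; it states the result and cites Lurie's treatment in Section 4.7 of \cite{ha} (with \cite{riehlverity} as an alternative). Your sketch is a correct outline of precisely Lurie's argument: necessity by identifying $G$ with the forgetful functor, sufficiency by constructing a left adjoint $L$ to the comparison functor as the geometric realization of the lifted bar resolution $FT^{\bullet}X$, using hypothesis (b) to guarantee the colimit exists and satisfies $GL \simeq \mathrm{id}$, hypothesis (a) to upgrade the resulting equivalences in $\mathcal{C}$ to equivalences in $\mathcal{D}$ and $\mathrm{Alg}_T(\mathcal{C})$, and then correctly flagging that the genuine content is the $\infty$-categorical bookkeeping of the simplicial resolution and the adjunction data, which is exactly what Lurie's machinery supplies. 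So this is the same route as the paper's cited source, not a different one.

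One small point worth tightening if you were to fill this in: the assertion that ``the adjunction $L \dashv \varphi$ follows from this colimit presentation together with $F \dashv G$'' is doing a fair amount of work. You want $\mathrm{Map}_{\mathcal{D}}(L(X,\mu), Y) \simeq \lim_{\bullet} \mathrm{Map}_{\mathcal{C}}(T^{\bullet}X, GY)$, and separately $\mathrm{Map}_{\mathrm{Alg}_T}(X, \varphi Y) \simeq \lim_{\bullet} \mathrm{Map}_{\mathcal{C}}(T^{\bullet}X, GY)$; the second identification uses that every $T$-algebra is canonically the colimit of its bar resolution by free algebras (the split simplicial object described above Remark \ref{rmk:leftTmodule}), which is itself a nontrivial $\infty$-categorical fact. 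Making this precise is one of the places where the coherence burden you mention actually bites.
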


Condition (b) might seem cryptic at first sight: however, it is automatically satisfied if $\mathcal{D}$ admits all geometric realizations (i.e., colimits of $\mathbf{\Delta}^{\mathrm{op}}$-shaped diagrams) and $G$ preserves these, which will suffice for our applications. We usually denote the colimit of a simplicial object $X_{\bullet}$ by $|X_{\bullet}|$. The typical split simplicial object to have in mind is the diagram above Remark \ref{rmk:leftTmodule} describing $T^{\bullet + 1} X$.

Any adjoint pair $F: \mathcal{C} \rightleftarrows \mathcal{D}: G$ gives rise to a simplicial resolution of the identity functor of $\mathcal{D}$. To be precise, it gives a simplicial object $(FG)^{\bullet + 1}$ with an augmentation to $\mathrm{id}_{\mathcal{D}}$:
\[
\begin{tikzcd}
\cdots \ar{r} \ar[shift left = 2]{r} \ar[shift right = 2]{r} & (FG)^2 \ar[shift right]{l} \ar[shift left]{l} \ar[shift left]{r} \ar[shift right]{r} & FG \ar{r}{\varepsilon} \ar{l} & \mathrm{id}_{\mathcal{D}}.
\end{tikzcd}
\]
In the notation of bar constructions this simplicial object can be written as $\mathrm{Bar}(F, GF, G)_{\bullet}$. The degeneracy maps use the unit $\eta\colon \mathrm{id}_{\mathcal{C}} \rightarrow GF$, the face maps use the counit $\varepsilon$. A dual comment applies to give a cosimplicial resolution of the identity functor of $\mathcal{C}$. We used exactly this resolution, associated to the adjunction $(\Sigma^\infty, \Omega^\infty)$, in the second proof of Theorem \ref{thm:quillen2}. The following is a useful criterion for monadicity:

\begin{lemma}
\label{lem:monadicitycriterion}
Suppose $F: \mathcal{C} \rightleftarrows \mathcal{D}: G$ is an adjoint pair and that $\mathcal{D}$ admits colimits of $G$-split simplicial objects. Then this pair is monadic if and only if for every object $X$ of $\mathcal{C}$, the map
\begin{equation*}
|(FG)^{\bullet + 1}|(X) \rightarrow X
\end{equation*}
arising from the simplicial resolution described above is an equivalence. 
\end{lemma}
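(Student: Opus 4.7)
The plan is to invoke Theorem~\ref{thm:barrbeck}. Let $T := GF$ be the induced monad on $\mathcal{C}$ and $\varphi\colon \mathcal{D} \to \mathrm{Alg}_T(\mathcal{C})$ the comparison functor, sending $X$ to $GX$ with its canonical $T$-algebra structure. The standing hypothesis that $\mathcal{D}$ admits colimits of $G$-split simplicial objects allows me to construct a left adjoint $\psi\colon \mathrm{Alg}_T(\mathcal{C}) \to \mathcal{D}$ by $\psi(A) := |FT^\bullet A|$: this colimit exists because $FT^\bullet A$ is $G$-split, its $G$-image being the standard split bar resolution $T^{\bullet+1} A$ of the $T$-algebra $A$. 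A direct manipulation using the $F \dashv G$ adjunction and the universal property of the bar resolution in $\mathrm{Alg}_T(\mathcal{C})$ identifies the counit of $\psi \dashv \varphi$ at $X \in \mathcal{D}$ with precisely the augmentation $|(FG)^{\bullet+1}(X)| \to X$ of the statement. The $(\Rightarrow)$ direction of the lemma is then immediate.

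For $(\Leftarrow)$, assuming the counit is always an equivalence, I would verify the two conditions of Theorem~\ref{thm:barrbeck}. Conservativity of $G$ is easy: if $G(f)$ is an equivalence for some $f\colon X \to Y$ in $\mathcal{D}$, then each $(FG)^n(f)$ is an equivalence by functoriality, so $|(FG)^{\bullet+1}|(f)$ is an equivalence on realizations, which by naturality and the hypothesis coincides with $f$ itself.

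The real work lies in verifying that $G$ preserves colimits of $G$-split simplicial objects. Given such a $Y_\bullet$ with $W := |Y_\bullet|$ in $\mathcal{D}$ and $C := |GY_\bullet|$ in $\mathcal{C}$, the goal is to show the canonical map $C \to GW$ is an equivalence. The approach is a bisimplicial comparison: consider $Z_{\bullet, \bullet'} := (FG)^{\bullet'+1}(Y_\bullet) = FT^{\bullet'} G Y_\bullet$. Realizing in $\bullet'$ first at each level $n$ recovers $Y_n$ by the hypothesis, producing diagonal realization $W$. Realizing in $\bullet$ first, at each level $\bullet' = k$ the simplicial object $FT^k G Y_\bullet$ has $G$-image $T^{k+1} G Y_\bullet$, split with colimit $T^{k+1} C$; since $F$ preserves all colimits, $|FT^k G Y_\bullet|_\bullet \simeq FT^k C$, so the diagonal realization also identifies with $|FT^{\bullet'} C|_{\bullet'} \simeq \psi(C)$, where $C$ carries the $T$-algebra structure inherited from the structures on each $GY_n$. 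Combining the resulting identification $W \simeq \psi(C)$ with the hypothesis $W \simeq \psi(GW)$ and the conservativity of $G$ established above then pins down $C \simeq GW$. Carefully comparing the two bar-type resolutions $FT^\bullet C$ and $FT^\bullet GW$ of $W$ via the natural map $C \to GW$ is the technical heart of the argument.
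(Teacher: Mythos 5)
Your setup is correct: constructing $\psi$ by $\psi(A) = |FT^\bullet A|$, identifying the counit of $\psi \dashv \varphi$ with the augmentation $|(FG)^{\bullet+1}X| \to X$, the $(\Rightarrow)$ direction, and the conservativity of $G$ in the $(\Leftarrow)$ direction all go through as you describe (and you are right that ``$X$ of $\mathcal{C}$'' in the statement must mean $X$ of $\mathcal{D}$).

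The gap is the final step. Your bisimplicial computation does give $W \simeq \psi(C)$, and combined with the hypothesis at $W$ one obtains an equivalence $\psi(C) \simeq \psi(GW)$, which one can check is $\psi$ applied to the comparison map $\lambda\colon C \to GW$. But ``conservativity of $G$'' cannot then ``pin down $C \simeq GW$'': $\lambda$ is a morphism of $\mathcal{C}$ (or of $\mathrm{Alg}_T(\mathcal{C})$), so there is nothing for $G$ to detect. What you actually need is conservativity of $\psi$, i.e.\ essentially the identification $G\psi \simeq U$ with the forgetful functor --- but that is exactly the assertion that $G$ preserves $G$-split realizations, which is the thing you set out to prove. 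The argument is circular. Equivalently: the triangle identity plus the counit hypothesis show $\psi(\eta'_A)$ is always an equivalence, but passing to $\eta'_A$ itself being an equivalence (which is essential surjectivity of $\varphi$) requires conservativity of $\psi$, and your proof never supplies it. In fact the hypotheses you have in hand are not enough: take $\mathcal{C} = \mathrm{Set}$, $T$ the free abelian group monad, and $\mathcal{D}$ the reflective subcategory of reduced abelian groups inside $\mathrm{Alg}_T(\mathcal{C}) = \mathrm{Ab}$, with $G$ the forgetful functor and $F(S) = \mathbb{Z}^{(S)}$. Free abelian groups are reduced, so $GF = T$ and $\varphi$ is the fully faithful, non-surjective inclusion $\mathcal{D} \hookrightarrow \mathrm{Ab}$; $\mathcal{D}$ is cocomplete, and the counit $\psi\varphi X = LX \to X$ is an isomorphism for every reduced $X$, yet the pair is not monadic. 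So the $(\Leftarrow)$ direction needs the additional (here implicit) hypothesis that $G$ preserves $G$-split realizations --- and once you assume that, your conservativity observation plus Theorem~\ref{thm:barrbeck} close the argument directly, with no bisimplicial comparison required.
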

%\begin{proof}
%The `only if' direction is clear, so let us focus on the case where the map of the statement is assumed to be an equivalence for all $X$. If $f: X \rightarrow Y$ is a map in $\mathcal{D}$ for which $G(f)$ is an equivalence, then $(FG)^{\bullet + 1}(X) \rightarrow (FG)^{\bullet + 1}(Y)$ is an equivalence of simplicial objects. Hence the induced map on realizations, which is $f$ itself, is an equivalence and consequently $G$ is conservative. If $X_{\bullet} \rightarrow X_{-1}$ is an augmented simplicial object which is $G$-split, we should show that $|X_{\bullet}| \rightarrow X_{-1}$ is an equivalence. Consider the diagram
%\[
%\begin{tikzcd}
%\vert(FG)^{\bullet + 1}(X_{\bullet})\vert \ar{r}\ar{d} & \vert(FG)^{\bullet + 1}(X_{-1})\vert \ar{d} \\
%\vert X_{\bullet}\vert \ar{r} & X_{-1}.
%\end{tikzcd}
%\]
%The vertical arrows are equivalences by assumption, whereas the top horizontal arrow is an equivalence by the assumption that $X_{\bullet} \rightarrow X_{-1}$ is $G$-split. Therefore the remaining arrow in the square is also an equivalence.
%\end{proof}

\section{Koszul duality}
\label{sec:koszul}

\index{Koszul duality}
The duality between Lie algebras and commutative coalgebras expressed by Theorem \ref{thm:CE} is a special case of what is usually called \emph{Koszul duality} or \emph{bar-cobar duality}. We will begin this section with a discussion of this duality for associative algebras and coalgebras. In the setting of differential graded algebras this goes back at least to the work of Adams \cite{adamscobar}, Priddy \cite{priddy}, and Moore \cite{moore}; we will phrase it in a more general context following Lurie (Section 4.3 of \cite{luriedagX}). The remainder of the section is a discussion of Koszul duality in the generality of (co)algebras for (co)operads. References for this material are \cite{getzlerjones,ginzburgkapranov,lodayvallette} in the differential graded context, \cite{chingbarcobar} in the context of stable homotopy theory, and \cite{francisgaitsgory} for a discussion at the more general level of $\infty$-categories. 

For the purposes of this discussion, let $\mathcal{C}$ be a symmetric monoidal $\infty$-category. We will assume it admits limits and colimits. Suppose $A$ is an associative algebra object of $\mathcal{C}$ equipped with an augmentation $\varepsilon: A \rightarrow \mathbf{1}$ to the unit of $\mathcal{C}$. Then the \emph{bar construction} of $A$\index{bar construction}, denoted $\mathrm{Bar}(A)$, is the relative tensor product $\mathbf{1} \otimes_A \mathbf{1}$, which by definition is the colimit of the simplicial object
\[
\begin{tikzcd}
\cdots A \otimes A \ar{r} \ar[shift left = 2]{r} \ar[shift right = 2]{r} & A \ar[shift right]{l} \ar[shift left]{l} \ar[shift left]{r} \ar[shift right]{r} & \mathbf{1}. \ar{l}
\end{tikzcd}
\]
The degeneracy maps of this simplicial object are constructed from the unit of $A$, whereas the face maps use the multiplication $A \otimes A \rightarrow A$ for the `inner' faces and the augmentation $\varepsilon$ for the `outer' faces. In the notation of Remark \ref{rmk:leftTmodule} it can be written as $\mathrm{Bar}(\mathbf{1}, A, \mathbf{1})_\bullet$.

The object $\mathrm{Bar}(A)$ admits a comultiplication, using the maps
\begin{equation*}
\mathbf{1} \otimes_A \mathbf{1} \simeq \mathbf{1} \otimes_A A \otimes_A \mathbf{1} \xrightarrow{\varepsilon} \mathbf{1} \otimes_A \mathbf{1} \otimes_A \mathbf{1} \rightarrow (\mathbf{1} \otimes_A \mathbf{1}) \otimes (\mathbf{1} \otimes_A \mathbf{1}).
\end{equation*}
(The last map arises from the universal property of the colimit defining the term $\mathbf{1} \otimes_A \mathbf{1} \otimes_A \mathbf{1}$; under the additional assumption that the tensor product on $\mathcal{C}$ commutes with geometric realizations in each variable, it is always an equivalence.) However, the situation is much better than just this: the object $\mathrm{Bar}(A)$ can be upgraded to a homotopy-coherent associative coalgebra object of $\mathcal{C}$ equipped with a coaugmentation. Write $\mathrm{Alg}^{\mathrm{aug}}(\mathcal{C})$ for the $\infty$-category of augmented associative algebras in $\mathcal{C}$ and 
\begin{equation*}
\mathrm{coAlg}^{\mathrm{aug}}(\mathcal{C}) := \mathrm{Alg}^{\mathrm{aug}}(\mathcal{C}^{\mathrm{op}})^{\mathrm{op}}
\end{equation*}
for the $\infty$-category of coaugmented associative coalgebras. The generality of the following statement is first articulated by Lurie in \cite{luriedagX}, but its essential form traces back to Moore \cite{moore}:

\begin{theorem}
\label{thm:barcobar}
There is an adjoint pair of functors (in which $\mathrm{Bar}$ is left adjoint) as follows:
\[
\begin{tikzcd}
\mathrm{Alg}^{\mathrm{aug}}(\mathcal{C}) \ar[shift left]{r}{\mathrm{Bar}} & \mathrm{coAlg}^{\mathrm{aug}}(\mathcal{C}) \ar[shift left]{l}{\mathrm{Cobar}}.
\end{tikzcd}
\]
\end{theorem}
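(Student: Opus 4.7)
\emph{Proof proposal.} The plan is to construct $\mathrm{Cobar}$ dually to $\mathrm{Bar}$, upgrade both assignments to functors between the augmented (co)algebra $\infty$-categories, and then establish the adjunction by identifying the mapping spaces on both sides with a common $\infty$-groupoid of \emph{twisting morphisms}, the homotopy-coherent analogue of Maurer--Cartan elements.

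Concretely, I would first define $\mathrm{Cobar}(C)$ for a coaugmented coalgebra $C$ as the totalization of the cosimplicial object whose $n$-th term is $C^{\otimes n}$, with coface maps built from the comultiplication $C \to C \otimes C$ (inner cofaces) and the coaugmentation $\mathbf{1} \to C$ (outer cofaces); this is the literal dual of the simplicial object defining $\mathrm{Bar}$. The multiplication on $\mathrm{Cobar}(C)$ is dual to the comultiplication sketched on $\mathrm{Bar}(A)$: it arises from concatenation $C^{\otimes m} \otimes C^{\otimes n} \to C^{\otimes m+n}$ together with insertion of the coaugmentation, provided that $\otimes$ in $\mathcal{C}$ commutes with totalizations in each variable (otherwise this must be handled operadically).

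For the adjunction itself, I would produce a natural equivalence
\[
\mathrm{Map}_{\mathrm{coAlg}^{\mathrm{aug}}(\mathcal{C})}(\mathrm{Bar}(A), C) \simeq \mathrm{Map}_{\mathrm{Alg}^{\mathrm{aug}}(\mathcal{C})}(A, \mathrm{Cobar}(C)),
\]
both sides being identified with an $\infty$-groupoid $\mathrm{Tw}(A,C)$ of twisting morphisms from $A$ to $C$. A coalgebra map out of $\mathrm{Bar}(A) = |\mathrm{Bar}(\mathbf{1}, A, \mathbf{1})_\bullet|$ to $C$ is, by the universal property of the colimit, a coherent simplicial family of maps compatible with the multiplication of $A$ and the comultiplication of $C$; dually, an algebra map $A \to \mathrm{Cobar}(C)$ is a coherent cosimplicial family. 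In both cases the data distils to a morphism $\bar{A} \to \bar{C}$ between augmentation ideals satisfying a Maurer--Cartan compatibility, together with all higher homotopies, so both mapping spaces are equivalent to $\mathrm{Tw}(A,C)$ and the adjunction follows by Yoneda.

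The hardest part, and the reason the theorem is subtle in this generality, is the \emph{coherence} of the coalgebra structure on $\mathrm{Bar}(A)$. In the classical differential graded context the comultiplication is given by an explicit deconcatenation formula and the necessary identities are elementary; in a general symmetric monoidal $\infty$-category one must produce a genuinely coassociative $\infty$-coalgebra together with all higher compatibilities assembling into the adjunction. Lurie's approach in \cite{luriedagX} sidesteps the direct construction by exhibiting $\mathrm{Bar}$ via an $\infty$-operadic universal property, from which both the coalgebra structure and the adjunction follow by formal manipulations --- at the cost of considerable technical setup.
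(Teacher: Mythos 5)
Your high-level strategy matches the paper's (which follows Lurie): identify both mapping spaces
\[
\mathrm{Map}_{\mathrm{coAlg}^{\mathrm{aug}}(\mathcal{C})}(\mathrm{Bar}(A), C)
\quad\text{and}\quad
\mathrm{Map}_{\mathrm{Alg}^{\mathrm{aug}}(\mathcal{C})}(A, \mathrm{Cobar}(C))
\]
with a common space of ``twisting'' data, so the adjunction is a formal consequence of Yoneda. The paper says exactly this and notes the relation to classical twisting cochains. So you have the right idea.

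However, there is a genuine gap, and it is precisely at the step you treat as the endpoint rather than the beginning: you assert the existence of an $\infty$-groupoid $\mathrm{Tw}(A,C)$ of twisting morphisms and that both mapping spaces are equivalent to it, but you never actually construct $\mathrm{Tw}(A,C)$ in a way that makes sense in an arbitrary symmetric monoidal $\infty$-category. Saying that the data ``distils to a morphism $\bar{A} \to \bar{C}$ satisfying a Maurer--Cartan compatibility, together with all higher homotopies'' is not a definition in this generality --- in a general $\mathcal{C}$ there is no internal mapping object and no dg structure to write a Maurer--Cartan equation in, and the ``higher homotopies'' are exactly the content of the theorem, not something one can wave at. The paper's sketch supplies the missing ingredient: $\mathrm{Tw}(A,C)$ is defined as the space of lifts of the pair $(C,A)$, regarded as an augmented algebra object of $\mathcal{C}^{\mathrm{op}} \times \mathcal{C}$, to an augmented algebra object of the twisted arrow category $\mathrm{TwArr}(\mathcal{C})$. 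This construction is what packages the ``Maurer--Cartan data plus all coherences'' correctly, and it simultaneously produces the coalgebra structure on $\mathrm{Bar}(A)$ and the algebra structure on $\mathrm{Cobar}(C)$, so one never needs the naive concatenation multiplication on $\mathrm{Cobar}(C)$ (which, as you rightly flag, would otherwise require $\otimes$ to commute with totalizations --- an assumption the theorem does not make). Your last paragraph gestures toward Lurie's technique but mischaracterizes it as an ``$\infty$-operadic universal property for $\mathrm{Bar}$''; the actual mechanism is the twisted arrow category, and without naming it the proposal is missing the central construction of the proof.
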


\index{cobar construction}
Here $\mathrm{Cobar}$ is the bar construction applied to the opposite category, i.e., it sends a coaugmented coalgebra $C$ to the totalization of the cosimplicial object 
\[
\begin{tikzcd}
\mathbf{1} \ar[shift left]{r} \ar[shift right]{r} & C  \ar{r} \ar[shift left = 2]{r} \ar[shift right = 2]{r} \ar{l} & C \otimes C \ar[shift right]{l} \ar[shift left]{l} \cdots.
\end{tikzcd}
\]
The proof of Theorem \ref{thm:barcobar} proceeds by showing that the two mapping spaces $\mathrm{Map}_{\mathrm{Alg}^{\mathrm{aug}}(\mathcal{C})}(A, \mathrm{Cobar}(C))$ and $\mathrm{Map}_{\mathrm{coAlg}^{\mathrm{aug}}(\mathcal{C})}(\mathrm{Bar}(A), C)$ are both equivalent to the space of lifts of the pair $(C,A)$, thought of as an augmented algebra object of the $\infty$-category $\mathcal{C}^{\mathrm{op}} \times \mathcal{C}$, to an augmented algebra object of the twisted arrow category $\mathrm{TwArr}(\mathcal{C})$. This is closely related to the more classical notion of twisting cochains.

\index{operad} \index{cooperad}
The aim of the rest of this section is to apply this general setup for bar-cobar duality to the case of operads and cooperads. We write $\mathrm{SymSeq}(\mathcal{C})$ for the $\infty$-category of symmetric sequences in $\mathcal{C}$; its objects are sequences of objects $\mathcal{O} = \{\mathcal{O}(n)\}_{n \geq 1}$ of $\mathcal{C}$ where the $n$th term $\mathcal{O}(n)$ is equipped with an action of the symmetric group $\Sigma_n$. To any such sequence we associate a functor
\begin{equation*}
F_{\mathcal{O}}\colon \mathcal{C} \rightarrow \mathcal{C}\colon X \mapsto \bigoplus_{n \geq 1} (\mathcal{O}(n) \otimes X^{\otimes n})_{h\Sigma_n}.
\end{equation*}
The $\infty$-category $\mathrm{SymSeq}(\mathcal{C})$ carries a monoidal structure, called the \emph{composition product} and usually denoted by $\circ$, which is essentially characterized by the fact that the assignment \index{composition product}
\begin{equation*}
\mathrm{SymSeq}(\mathcal{C}) \rightarrow \mathrm{Fun}(\mathcal{C}, \mathcal{C})\colon \mathcal{O} \mapsto F_{\mathcal{O}}
\end{equation*}
is monoidal (see Section 4.1.2 of \cite{brantner} or \cite{haugseng}). Here the $\infty$-category on the right is monoidal via the composition of functors. The unit for the composition product is the symmetric sequence $\mathbf{1}$, which has the monoidal unit of $\mathcal{C}$ as its first term and all other terms equal to $0$. In these terms, an operad (resp. a cooperad) in $\mathcal{C}$ is a monoid (resp. a comonoid) in $\mathrm{SymSeq}(\mathcal{C})$ with respect to the composition product.

%\begin{definition}
%\label{def:operad}
%An \emph{operad in $\mathcal{C}$} (resp. a \emph{cooperad in $\mathcal{C}$}) is a monoid (resp. a comonoid) in $\mathrm{SymSeq}(\mathcal{C})$.
%\end{definition}

Since we excluded the case $n=0$ from our definitions above, all operads and cooperads we consider here are in fact \emph{non-unital}. If $\mathcal{O}$ is an operad, then the associated functor $F_{\mathcal{O}}$ has the structure of a monad (and dually for a cooperad). One can then define an $\mathcal{O}$-algebra in $\mathcal{C}$ to be precisely an $F_{\mathcal{O}}$-algebra. 

\begin{remark}
\label{rmk:dpcoalgebras}
%The basic example of an operad (which exists in any $\mathcal{C}$) is the (non-unital) commutative operad $\mathbf{Com}$, which has $\mathbf{Com}(n)$ equal to the monoidal unit for all $n \geq 1$. Its algebras are non-unital commutative algebras in the usual sense, simply because the monad $F_{\mathbf{Com}}$ is precisely the free non-unital commutative algebra functor. 
Consider a cooperad $\mathcal{O}$. Observe that coalgebras for the comonad $F_{\mathcal{O}}$ are objects $X$ of $\mathcal{C}$ equipped with a `comultiplication map'
\begin{equation*}
X \rightarrow \bigoplus_{n \geq 1} (\mathcal{O}(n) \otimes X^{\otimes n})_{h\Sigma_n}
\end{equation*}
and further data recording its coassociativity and counitality. In the special case where $\mathcal{O}$ is the commutative cooperad (which as a symmetric sequence has $\mathcal{O}(n)$ equal to the monoidal unit of $\mathcal{C}$ for each $n \geq 1$), this notion of coalgebra is generally \emph{not} the same as that of a commutative coalgebra. One important difference is that the $n$-fold comultiplication of a commutative coalgebra $C$ gives a map
\begin{equation*}
C \rightarrow (C^{\otimes n})^{h\Sigma_n},
\end{equation*}
but this map need not factor through $(C^{\otimes n})_{h\Sigma_n}$. For this reason, the coalgebras for the comonad $F_{\mathcal{O}}$ are sometimes called \emph{conilpotent divided power $\mathcal{O}$-coalgebras} (cf. \cite{francisgaitsgory}). The adjective conilpotent refers to the direct sum, the term divided powers refers to the fact that $F_{\mathcal{O}}$ involves coinvariants for the symmetric groups, rather than invariants. 
\end{remark}

If $f\colon \mathcal{O} \rightarrow \mathcal{P}$ is a map of operads in $\mathcal{C}$, one obtains an adjunction between the corresponding $\infty$-categories of algebras:
\[
\begin{tikzcd}
\mathrm{Alg}_{\mathcal{O}} \ar[shift left]{r}{f_!} & \mathrm{Alg}_{\mathcal{P}} \ar[shift left]{l}{f^*}.
\end{tikzcd}
\]
Here the right adjoint $f^*$ is restriction along $f$. Informally, the left adjoint is the relative tensor product $\mathcal{P} \circ_{\mathcal{O}} -$, using the analogy between operads with their algebras and rings with their modules. More precisely, one first observes that $f_!$ sends free $\mathcal{O}$-algebras to free $\mathcal{P}$-algebras. As we described in Section \ref{sec:monads}, any $\mathcal{O}$-algebra $X$ admits a natural simplicial resolution in terms of free $\mathcal{O}$-algebras, namely the bar construction $\mathrm{Bar}(F_{\mathcal{O}}, F_{\mathcal{O}}, X)_\bullet$. Then $f_!X$ can be computed as the colimit of the simplicial object
\[
\begin{tikzcd}
\bigl(\cdots F_{\mathcal{P}} F_{\mathcal{O}}^2 X \ar{r} \ar[shift left = 2]{r} \ar[shift right = 2]{r}& F_{\mathcal{P}} F_{\mathcal{O}} X \ar[shift right]{l} \ar[shift left]{l} \ar[shift left]{r} \ar[shift right]{r} & F_{\mathcal{P}} X \bigr) \ar{l} =: \mathrm{Bar}(F_{\mathcal{P}}, F_{\mathcal{O}}, X).
\end{tikzcd}
\]

Let us specialize to the case where the $\infty$-category $\mathcal{C}$ is stable and $\mathcal{O}$ is \emph{reduced}, meaning $\mathcal{O}(1)$ is the monoidal unit of $\mathcal{C}$. The latter assumption gives essentially unique morphisms of operads $i\colon \mathbf{1} \rightarrow \mathcal{O}$ and $p\colon \mathcal{O} \rightarrow \mathbf{1}$, including the first term and projecting onto it respectively. Note that we may identify $\mathrm{Alg}_{\mathbf{1}}(\mathcal{C})$ with $\mathcal{C}$ itself, since $F_{\mathbf{1}}$ is the identity functor of $\mathcal{C}$. Then $i^*$ is the forgetful functor and $i_!$ the free $\mathcal{O}$-algebra functor. The functor $p^*$ can be identified with the \emph{trivial algebra functor} $\mathrm{triv}_{\mathcal{O}}$, equipping an object $X$ of $\mathcal{C}$ with the $\mathcal{O}$-algebra structure for which all maps
\begin{equation*}
(\mathcal{O}(n) \otimes X^{\otimes n})_{h\Sigma_n} \rightarrow X
\end{equation*}
are zero when $n > 1$ (recall that $\mathcal{C}$ is stable, so has a zero object). The left adjoint $p_!$ computes the \emph{derived indecomposables} of an $\mathcal{O}$-algebra $X$. We will denote it by $\mathrm{TAQ}_{\mathcal{O}}$, which stands for \emph{topological Andr\'{e}--Quillen homology}. \index{topological Andr\'{e}--Quillen homology} The reason for this terminology is that in the special case $\mathcal{O} = \mathbf{Com}$, this functor is closely related to the homology of commutative rings as defined in terms of the cotangent complex by Andr\'{e} and Quillen \cite{quillencotangent}. The two adjunctions just discussed give a diagram
\[
\begin{tikzcd}
\mathcal{C} \ar[shift left]{r}{\mathrm{free}_{\mathcal{O}}} & \mathrm{Alg}_{\mathcal{O}}(\mathcal{C}) \ar[shift left]{l}{\mathrm{forget}} \ar[shift left]{r}{\mathrm{TAQ}_{\mathcal{O}}} & \mathcal{C} \ar[shift left]{l}{\mathrm{triv}_{\mathcal{O}}}
\end{tikzcd}
\]
in which both horizontal composites are equivalent to the identity (because $pi = \mathrm{id}$).

In fact the functor $\mathrm{TAQ}_{\mathcal{O}}$ can be characterized by a universal property:

\begin{theorem}[Basterra--Mandell \cite{basterramandell}, Lurie {\cite[Theorem~7.3.4.7]{ha}}]
\label{thm:basterramandell}
Suppose the symmetric monoidal $\infty$-category $\mathcal{C}$ is stable, presentable, and the tensor product preserves colimits in each variable separately. Then the adjoint pair $(\mathrm{TAQ}_{\mathcal{O}}, \mathrm{triv}_{\mathcal{O}})$ exhibits $\mathcal{C}$ as the stabilization of $\mathrm{Alg}_{\mathcal{O}}(\mathcal{C})$. In other words, $\mathrm{TAQ}_{\mathcal{O}}$ is the initial colimit-preserving functor from $\mathrm{Alg}_{\mathcal{O}}(\mathcal{C})$ to a presentable stable $\infty$-category.
\end{theorem}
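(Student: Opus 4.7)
The plan is to verify the universal property of stabilization directly. Since $\mathcal{C}$ is stable and presentable and $\mathrm{TAQ}_{\mathcal{O}}$ preserves colimits (as a left adjoint), the universal colimit-preserving functor $\Sigma^\infty_{\mathrm{Alg}} \colon \mathrm{Alg}_{\mathcal{O}}(\mathcal{C}) \to \mathrm{Sp}(\mathrm{Alg}_{\mathcal{O}}(\mathcal{C}))$ induces a colimit-preserving factorization $\mathrm{TAQ}_{\mathcal{O}} \simeq \widetilde{\mathrm{TAQ}} \circ \Sigma^\infty_{\mathrm{Alg}}$, and it suffices to show $\widetilde{\mathrm{TAQ}}$ is an equivalence. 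First check the preliminaries: $\mathrm{Alg}_{\mathcal{O}}(\mathcal{C})$ is presentable because $F_{\mathcal{O}}$ is an accessible monad preserving sifted colimits, and it is pointed because $\mathcal{O}$ is non-unital and reduced, so $F_{\mathcal{O}}(0) = 0$ and the zero object of $\mathcal{C}$ carries a unique algebra structure which is both initial and terminal.

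To construct a candidate inverse, observe that $\mathrm{triv}_{\mathcal{O}}$ preserves limits (right adjoint) and, in particular, commutes with $\Omega$. Hence for each $X \in \mathcal{C}$ the sequence $\{\mathrm{triv}_{\mathcal{O}}(\Omega^{-n} X)\}_{n \geq 0}$ assembles into a spectrum object in $\mathrm{Alg}_{\mathcal{O}}(\mathcal{C})$, giving a functor $T \colon \mathcal{C} \to \mathrm{Sp}(\mathrm{Alg}_{\mathcal{O}}(\mathcal{C}))$. The composite $\widetilde{\mathrm{TAQ}} \circ T$ is readily identified with $\mathrm{id}_{\mathcal{C}}$ via the counit equivalence $\mathrm{TAQ}_{\mathcal{O}} \circ \mathrm{triv}_{\mathcal{O}} \simeq \mathrm{id}_{\mathcal{C}}$, which holds because $p \circ i$ is the identity at the level of operads.

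The main content, and the main obstacle, is the other composite. One must show that for every spectrum object $\{A_n\}$ in $\mathrm{Alg}_{\mathcal{O}}(\mathcal{C})$ the algebra structure on each $A_n$ is equivalent to the trivial one; equivalently, for any infinitely deloopable $A$, the structure maps $(\mathcal{O}(k) \otimes A^{\otimes k})_{h\Sigma_k} \to A$ with $k \geq 2$ must become null. I expect the argument to proceed via Goodwillie calculus: the monad $F_{\mathcal{O}}$ decomposes as a sum whose $k$th summand $(\mathcal{O}(k) \otimes (-)^{\otimes k})_{h\Sigma_k}$ is $k$-homogeneous, and since $\mathcal{O}$ is reduced the first (linear) summand is the identity. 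The linearization of $F_{\mathcal{O}}$ at the zero object is therefore the identity monad on $\mathcal{C}$, whose algebras are just objects of $\mathcal{C}$. The hard part is promoting this infinitesimal identification to an equivalence $\mathrm{Sp}(\mathrm{Alg}_{\mathcal{O}}(\mathcal{C})) \simeq \mathcal{C}$: concretely, one must show that sufficiently many loopings of any fixed $k$-homogeneous operation on an $\mathcal{O}$-algebra become null, a convergence statement for the Goodwillie tower of the forgetful-trivial adjunction that will presumably require some care in handling the interaction between the symmetric monoidal structure on $\mathcal{C}$ and loop objects.

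Once the equivalence $\mathrm{Sp}(\mathrm{Alg}_{\mathcal{O}}(\mathcal{C})) \simeq \mathcal{C}$ is established, $\widetilde{\mathrm{TAQ}}$ becomes the identity under this identification and $\mathrm{TAQ}_{\mathcal{O}}$ is identified with $\Sigma^\infty_{\mathrm{Alg}}$. This yields the claimed universal property: $\mathrm{TAQ}_{\mathcal{O}}$ is the initial colimit-preserving functor from $\mathrm{Alg}_{\mathcal{O}}(\mathcal{C})$ to a presentable stable $\infty$-category, exhibiting $\mathcal{C}$ as the stabilization of $\mathrm{Alg}_{\mathcal{O}}(\mathcal{C})$ with right adjoint $\mathrm{triv}_{\mathcal{O}}$ playing the role of $\Omega^\infty$.
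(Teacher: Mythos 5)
Your preliminaries (presentability of $\mathrm{Alg}_{\mathcal{O}}(\mathcal{C})$, pointedness via $F_{\mathcal{O}}(0)=0$, the construction of $T = \partial\mathrm{triv}_{\mathcal{O}}$ as a spectrum-level lift of $\mathrm{triv}_{\mathcal{O}}$, and the identification $\widetilde{\mathrm{TAQ}} \circ T \simeq \mathrm{id}_{\mathcal{C}}$ via the chain rule applied to $p \circ i = \mathrm{id}$) are all sound and match the paper's set-up. But the route you outline for the remaining direction is both harder than necessary and, as you have phrased it, not quite the right thing to prove. You propose to show that for any spectrum object $\{A_n\}$ in $\mathrm{Alg}_{\mathcal{O}}(\mathcal{C})$, each $A_n$ carries a trivial algebra structure. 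This objectwise statement is not what one needs (and is not true in general): a single looping $\Omega A$ need not be a trivial algebra, and even after arbitrarily many loopings the structure maps do not literally vanish level by level. What becomes true is that the entire $\infty$-category $\mathrm{Sp}(\mathrm{Alg}_{\mathcal{O}}(\mathcal{C}))$ is equivalent to $\mathcal{C}$, i.e., the structure trivializes coherently across the whole spectrum object, not pointwise in each degree. Trying to force the objectwise triviality via a Goodwillie convergence argument is exactly the thicket you flag as requiring ``some care,'' and it is the wrong thicket.

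The paper's argument uses a different key lemma and, crucially, the \emph{other} adjunction in the diagram. Stabilize the monadic adjunction $(\mathrm{free}_{\mathcal{O}}, \mathrm{forget})$ between $\mathcal{C}$ and $\mathrm{Alg}_{\mathcal{O}}(\mathcal{C})$, and use the general fact that the stabilization of a monadic adjunction is monadic. Hence $\mathrm{Sp}(\mathrm{Alg}_{\mathcal{O}}(\mathcal{C}))$ is monadic over $\mathcal{C}$ via $\partial\mathrm{forget}$, with monad the linearization of $F_{\mathcal{O}}$. Because $\mathcal{O}$ is reduced and the tensor product preserves colimits in each variable, this linearization is $X \mapsto \mathcal{O}(1) \otimes X \simeq X$, the identity monad; so $\partial\mathrm{forget}$ is an equivalence, and therefore so is its left adjoint $\partial\mathrm{free}_{\mathcal{O}}$. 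The chain rule then identifies $\mathrm{TAQ}_{\mathcal{O}}$ with $\Sigma^\infty_{\mathrm{Alg}}$ under this equivalence, exactly as you wanted. You had the right observation (linearization of $F_{\mathcal{O}}$ is the identity) but attached it to the wrong adjunction; routing it through the monadicity of the stabilized free--forgetful pair dispenses with the convergence statement entirely.
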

\begin{proof}[Sketch of proof]
Take the diagram above the statement of the theorem, stabilize the $\infty$-category $\mathrm{Alg}_{\mathcal{O}}(\mathcal{C})$, and replace the functors by their linearizations (or first derivatives) to obtain the following diagram:
\[
\begin{tikzcd}
\mathcal{C} \ar[shift left]{r}{\partial\mathrm{free}_{\mathcal{O}}} & \mathrm{Sp}(\mathrm{Alg}_{\mathcal{O}}(\mathcal{C})) \ar[shift left]{l}{\partial\mathrm{forget}} \ar[shift left]{r}{\partial\mathrm{TAQ}_{\mathcal{O}}} & \mathcal{C}. \ar[shift left]{l}{\partial\mathrm{triv}_{\mathcal{O}}}
\end{tikzcd}
\]
The chain rule $\partial(G \circ F) \cong \partial(G) \circ \partial (F)$ guarantees that the horizontal composites are still equivalent to the identity. Moreover, it is a general fact (and not hard to verify) that the stabilization of a monadic adjunction is monadic, so that the pair of functors on the left exhibits $\mathrm{Sp}(\mathrm{Alg}_{\mathcal{O}}(\mathcal{C}))$ as monadic over $\mathcal{C}$. The relevant monad is the linearization of the functor $\mathrm{forget} \circ \mathrm{free}_{\mathcal{O}}$; the latter is explicitly given by assigning to $X \in \mathcal{C}$ the object
\begin{equation*}
\bigoplus_{n \geq 1} (\mathcal{O}(n) \otimes X^{\otimes n})_{h\Sigma_n}.
\end{equation*}
The linearization of this clearly is the monad
\begin{equation*}
X \mapsto \mathcal{O}(1) \otimes X \cong X,
\end{equation*}
from which it follows that $\partial \mathrm{free}_{\mathcal{O}}\colon \mathcal{C} \rightarrow \mathrm{Sp}(\mathrm{Alg}_{\mathcal{O}}(\mathcal{C}))$ is an equivalence. This implies the theorem.
\end{proof}

\begin{remark}
One can paraphrase Theorem \ref{thm:basterramandell} by saying that $\mathrm{TAQ}_{\mathcal{O}}$ is the universal homology theory for $\mathcal{O}$-algebras. For specific choices of $\mathcal{O}$ it reproduces familiar notions of homology. As an example, take $\mathcal{C}$ to be the $\infty$-category $\mathrm{Ch}_{\mathbb{Z}}$ of chain complexes of abelian groups. As alluded to above, $\mathrm{TAQ}_{\mathbf{Com}}$ reproduces Andr\'{e}--Quillen homology of commutative rings. For associative algebras it gives a degree shift of Hochschild homology (when working over a field) or of Shukla homology (for general rings). For Lie algebras it produces a degree shift of the Chevalley--Eilenberg homology already discussed in Section \ref{sec:rationalhomotopy}. The fact that $\mathrm{TAQ}_{\mathcal{O}}$ preserves colimits, together with the equivalence $\mathrm{TAQ}_{\mathcal{O}} \circ \mathrm{free}_{\mathcal{O}} \cong \mathrm{id}_{\mathcal{C}}$, makes this homology theory a useful tool to understand the `cell structure' of an algebra, meaning the way that $X$ can be built from free algebras on generators of $\mathcal{C}$ (the cells) by pushouts (the cell attachments). This is similar to how singular homology can be used to investigate the minimal cell structure of a topological space. In case $\mathcal{O}$ is the operad $\mathbf{E}_n$ of little $n$-discs, this perspective is used to great effect in \cite{galatiuskupersrw} and its sequels.
\end{remark}

\index{Koszul duality}
The starting point of Koszul duality from the point of view of operads, which originates in \cite{ginzburgkapranov} and \cite{getzlerjones}, is the following. Again consider an operad $\mathcal{O}$ for which $\mathcal{O}(1)$ is the monoidal unit of $\mathcal{C}$, so that projection to this term gives an augmentation $\varepsilon\colon \mathcal{O} \rightarrow \mathbf{1}$. Then $\mathcal{O}$ becomes an augmented associative algebra in $\mathrm{SymSeq}(\mathcal{C})$ and thus it makes sense to speak of its bar construction $\mathrm{Bar}(\mathcal{O})$, which is a cooperad. 

\begin{example}
\label{ex:operads}
One of the most important examples is the duality between the commutative operad and the Lie operad. To be precise, the results of Ginzburg--Kapranov \cite{ginzburgkapranov} and Getzler--Jones \cite{getzlerjones} show that the cobar construction of the commutative cooperad in chain complexes of $R$-modules (for some commutative ring $R$) give the Lie operad with a degree shift. Dually, the bar construction of the Lie operad produces the commutative cooperad, again with a shift. In Section \ref{sec:Lie} we will return to a variant of this example in the context of stable homotopy theory in order to define spectral Lie algebras. Another example is the bar construction of the associative operad, which is the linear dual of the associative operad with a degree shift. More generally, Fresse \cite{fresse} proves that the bar construction of the operad given by the singular chains of the little discs operad $\mathbf{E}_n$ is the cooperad $C^*\mathbf{E}_n^{\vee}$ (shifted $n$ times). This is what is often referred to as the \emph{self-duality} of the $\mathbf{E}_n$-operad. Conjecturally this self-duality already holds for the operad $\Sigma^\infty_+\mathbf{E}_n$ in the $\infty$-category of spectra. At the time of writing this has not yet been resolved, but at least the underlying symmetric sequence of the cooperad $\mathrm{Bar}(\Sigma^\infty_+\mathbf{E}_n)$ has the expected homotopy type \cite{chingbarcobar}.
\end{example}

\begin{remark}
In fact, for operads and cooperads in a stable $\infty$-category $\mathcal{C}$ for which the tensor product is exact in each variable separately, the process of forming bar and cobar constructions is invertible; the unit and counit maps
\begin{equation*}
\mathcal{O} \rightarrow \mathrm{Cobar}(\mathrm{Bar}(\mathcal{O})) \quad\quad \mathrm{Bar}(\mathrm{Cobar}(\mathcal{Q})) \rightarrow \mathcal{Q}
\end{equation*}
are equivalences (cf. \cite{francisgaitsgory} and \cite{chingbarcobar}).
\end{remark}

We write $\mathrm{coAlg}^{\mathrm{dp}}_{\mathrm{Bar}(\mathcal{O})}(\mathcal{C})$ for the $\infty$-category of conilpotent divided power $\mathrm{Bar}(\mathcal{O})$-coalgebras (cf. Remark \ref{rmk:dpcoalgebras}). More briefly, this is the $\infty$-category of coalgebras for the associated comonad $F_{\mathrm{Bar}(\mathcal{O})}$.

\begin{theorem}[Francis--Gaitsgory \cite{francisgaitsgory}]
\label{thm:koszul1}
The functor $\mathrm{TAQ}_{\mathcal{O}}$ factors as a composite
\begin{equation*}
\mathrm{Alg}_{\mathcal{O}}(\mathcal{C}) \xrightarrow{B_{\mathcal{O}}} \mathrm{coAlg}^{\mathrm{dp}}_{\mathrm{Bar}(\mathcal{O})}(\mathcal{C}) \xrightarrow{\mathrm{forget}} \mathcal{C}.
\end{equation*}
In other words, for any $\mathcal{O}$-algebra $X$ the object $\mathrm{TAQ}_{\mathcal{O}}(X)$ naturally admits the structure of a conilpotent divided power $\mathrm{Bar}(\mathcal{O})$-coalgebra. Moreover, the functor $B_{\mathcal{O}}$ admits a right adjoint $C_{\mathrm{Bar}(\mathcal{O})}$.
\end{theorem}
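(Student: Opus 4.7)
My plan is to exhibit $B_{\mathcal{O}}(X)$ by recognizing $\mathrm{TAQ}_{\mathcal{O}}(X)$ as a relative tensor product $\mathrm{id} \otimes_{F_{\mathcal{O}}} X$ and then transplanting the construction of the coalgebra structure on $\mathrm{Bar}(A)$ described earlier in Section~\ref{sec:koszul}. Via the monoidal functor $\mathcal{O} \mapsto F_{\mathcal{O}}\colon \mathrm{SymSeq}(\mathcal{C}) \to \mathrm{End}(\mathcal{C})$, an $\mathcal{O}$-algebra $X$ is a left $F_{\mathcal{O}}$-module, while the augmentation $\varepsilon\colon \mathcal{O} \to \mathbf{1}$ makes the identity functor a right $F_{\mathcal{O}}$-module; with these identifications $\mathrm{TAQ}_{\mathcal{O}}(X)$ is the geometric realization $|\mathrm{Bar}(\mathrm{id}, F_{\mathcal{O}}, X)_\bullet|$ in the notation of Remark~\ref{rmk:leftTmodule}. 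A natural coaction is then produced, in complete parallel with the comultiplication on $\mathrm{Bar}(A)$ displayed in Section~\ref{sec:koszul}, by inserting $F_{\mathcal{O}}$ via its unit, applying $\varepsilon$ to the inserted copy, and finally using the colimit-preservation of the composition product:
\begin{equation*}
\mathrm{id} \otimes_{F_{\mathcal{O}}} X \simeq \mathrm{id} \otimes_{F_{\mathcal{O}}} F_{\mathcal{O}} \otimes_{F_{\mathcal{O}}} X \xrightarrow{\varepsilon} \mathrm{id} \otimes_{F_{\mathcal{O}}} \mathrm{id} \otimes_{F_{\mathcal{O}}} X \to (\mathrm{id} \otimes_{F_{\mathcal{O}}} \mathrm{id})\bigl(\mathrm{id} \otimes_{F_{\mathcal{O}}} X\bigr).
\end{equation*}

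Because $\mathcal{O} \mapsto F_{\mathcal{O}}$ is monoidal and preserves the relevant bar constructions (as they are colimits built from the composition product), there is an equivalence of comonads $\mathrm{id} \otimes_{F_{\mathcal{O}}} \mathrm{id} = \mathrm{Bar}(F_{\mathcal{O}}) \simeq F_{\mathrm{Bar}(\mathcal{O})}$ on $\mathcal{C}$. The displayed coaction therefore equips $\mathrm{TAQ}_{\mathcal{O}}(X)$ with the structure of a coalgebra for the comonad $F_{\mathrm{Bar}(\mathcal{O})}$, which by Remark~\ref{rmk:dpcoalgebras} is by definition a conilpotent divided power $\mathrm{Bar}(\mathcal{O})$-coalgebra. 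This would define $B_{\mathcal{O}}$ and the desired factorization. The principal obstacle is making the construction coherent at the level of $\infty$-categories: as written, the recipe only pins down the coaction and its coassociativity up to homotopy. Following Francis--Gaitsgory, I would address this by building a single $\infty$-categorical gadget --- analogous to the twisted-arrow construction appearing in the proof of Theorem~\ref{thm:barcobar} --- whose objects simultaneously encode an augmented $\mathcal{O}$-module and a coaugmented $\mathrm{Bar}(\mathcal{O})$-comodule together with a compatible pairing, and whose two projections recover $\mathrm{Alg}_{\mathcal{O}}(\mathcal{C})$ and $\mathrm{coAlg}^{\mathrm{dp}}_{\mathrm{Bar}(\mathcal{O})}(\mathcal{C})$. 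The functor $B_{\mathcal{O}}$ then falls out as the universal arrow associated to this pairing.

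For the right adjoint $C_{\mathrm{Bar}(\mathcal{O})}$, I would appeal to the $\infty$-categorical adjoint functor theorem. Under standard hypotheses ($\mathcal{C}$ presentable with tensor product separately cocontinuous), both source and target of $B_{\mathcal{O}}$ are presentable. The composite $\mathrm{forget} \circ B_{\mathcal{O}} = \mathrm{TAQ}_{\mathcal{O}}$ is a left adjoint, hence preserves all small colimits; combined with the fact that the forgetful functor from $\mathrm{coAlg}^{\mathrm{dp}}_{\mathrm{Bar}(\mathcal{O})}(\mathcal{C})$ is conservative and creates those colimits preserved by $F_{\mathrm{Bar}(\mathcal{O})}$ (sifted colimits, by direct inspection of its defining formula), this forces $B_{\mathcal{O}}$ itself to preserve small colimits. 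Accessibility being clear, the adjoint functor theorem delivers the desired $C_{\mathrm{Bar}(\mathcal{O})}$.
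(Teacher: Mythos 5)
Your outline is correct in its broad strokes, and the identification $\mathrm{TAQ}_{\mathcal{O}}(X) \simeq \mathrm{id}\otimes_{F_{\mathcal{O}}} X$ together with the monoidality of $\mathcal{O}\mapsto F_{\mathcal{O}}$ (hence $\mathrm{Bar}(F_{\mathcal{O}})\simeq F_{\mathrm{Bar}(\mathcal{O})}$) are exactly the ingredients needed. But the route you take is genuinely different from the one in the paper, and substantially harder than necessary. You build the coaction by hand, bar-construction term by bar-construction term, and then correctly observe that this leaves an $\infty$-coherence problem that must be resolved via a twisted-arrow-style apparatus. The paper avoids this entirely by exploiting that $\mathrm{TAQ}_{\mathcal{O}}$ is not merely a colimit but a \emph{left adjoint}, with right adjoint $\mathrm{triv}_{\mathcal{O}}$: any left adjoint $L\dashv R$ factors canonically and coherently through $\mathrm{coAlg}_{LR}$ (the formal dual of the monadic comparison functor from Section~\ref{sec:monads}), so the divided-power coalgebra structure on $\mathrm{TAQ}_{\mathcal{O}}(X)$ comes for free, with all coherences supplied by the adjunction data. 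The only remaining task is to identify the comonad $\mathrm{TAQ}_{\mathcal{O}}\circ\mathrm{triv}_{\mathcal{O}}$ with $F_{\mathrm{Bar}(\mathcal{O})}$, which the paper does in three lines by resolving $\mathrm{triv}_{\mathcal{O}}X$ by free algebras via $|\mathrm{Bar}(F_{\mathcal{O}},F_{\mathcal{O}},\mathrm{triv}_{\mathcal{O}}X)|\simeq\mathrm{triv}_{\mathcal{O}}X$ and pushing $\mathrm{TAQ}_{\mathcal{O}}$ through the realization. You should recognize that the obstacle you flagged is not intrinsic to the statement but an artifact of choosing to construct the coaction explicitly rather than letting the adjunction do the work. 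Your treatment of the right adjoint via the adjoint functor theorem agrees with the paper's first suggestion (the phrasing about the forgetful functor creating only the colimits preserved by $F_{\mathrm{Bar}(\mathcal{O})}$ is slightly off --- the forgetful functor from coalgebras over a comonad creates \emph{all} colimits, being a left adjoint --- but this only strengthens the argument); the paper additionally records that $C_{\mathrm{Bar}(\mathcal{O})}$ admits an explicit description as derived primitives via the cobar construction.
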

\begin{proof}[Sketch of proof]
As explained in Section \ref{sec:monads}, the fact that $\mathrm{TAQ}_{\mathcal{O}}$ is left adjoint formally implies that it factors through the $\infty$-category of coalgebras for the comonad $\mathrm{TAQ}_{\mathcal{O}} \circ \mathrm{triv}_{\mathcal{O}}$. Thus it suffices to identify this comonad with $F_{\mathrm{Bar}(\mathcal{O})}$. To do this, recall that we may resolve any $\mathcal{O}$-algebra $X$ by free algebras and get an equivalence
\begin{equation*}
|\mathrm{Bar}(F_{\mathcal{O}}, F_{\mathcal{O}}, X)| \simeq X.
\end{equation*}
Hence 
\begin{eqnarray*}
\mathrm{TAQ}_{\mathcal{O}}(\mathrm{triv}_{\mathcal{O}} X) & \simeq & \mathrm{TAQ}_{\mathcal{O}}|\mathrm{Bar}(F_{\mathcal{O}}, F_{\mathcal{O}}, \mathrm{triv}_{\mathcal{O}} X)| \\
& \simeq & |\mathrm{Bar}(\mathrm{id}_{\mathcal{C}}, F_{\mathcal{O}}, \mathrm{id}_{\mathcal{C}})|(X) \\
& \simeq & F_{\mathrm{Bar}(\mathcal{O})}(X).
\end{eqnarray*}
The existence of the right adjoint $C_{\mathrm{Bar}(\mathcal{O})}$ can be deduced from the adjoint functor theorem. Alternatively, it may be constructed explicitly as the \emph{derived primitives} of a $\mathrm{Bar}(\mathcal{O})$-coalgebra, which is a construction formally dual to that of the derived indecomposables functor $\mathrm{TAQ}_{\mathcal{O}}$.
\end{proof}

The real challenge in understanding Koszul duality lies in identifying appropriate subcategories of $\mathrm{Alg}_{\mathcal{O}}(\mathcal{C})$ and $\mathrm{coAlg}^{\mathrm{dp}}_{\mathrm{Bar}(\mathcal{O})}(\mathcal{C})$ on which the functors $B_{\mathcal{O}}$ and $C_{\mathrm{Bar}(\mathcal{O})}$ give an equivalence of $\infty$-categories. Francis and Gaitsgory \cite{francisgaitsgory} describe several cases and generally conjecture that it should suffice to restrict to what are called pro-nilpotent algebras and ind-conilpotent coalgebras. A partial result goes as follows:

\begin{theorem}[Ching--Harper \cite{chingharper}]
\label{thm:chingharper}
Let $R$ be a commutative ring spectrum and $\mathcal{O}$ an operad in the $\infty$-category $\mathrm{Mod}_R$ of $R$-module spectra with $\mathcal{O}(1) = R$. Assume that $R$ and the terms $\mathcal{O}(n)$ are connective. Then the restriction 
\[
\begin{tikzcd}
\mathrm{Alg}_{\mathcal{O}}(\mathrm{Mod}_R)^{\geq 1} \ar[shift left]{r}{B_{\mathcal{O}}} & \mathrm{coAlg}_{\mathrm{Bar}(\mathcal{O})}(\mathrm{Mod}_R)^{\geq 1} \ar[shift left]{l}{C_{\mathrm{Bar}(\mathcal{O})}}
\end{tikzcd}
\]
of the adjunction of Theorem \ref{thm:koszul1} to connected objects is an equivalence of $\infty$-categories.
\end{theorem}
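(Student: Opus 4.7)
The plan is to verify that, on connected objects, both the unit $\eta_X\colon X \to C_{\mathrm{Bar}(\mathcal{O})} B_\mathcal{O}(X)$ and the counit $\varepsilon_C\colon B_\mathcal{O} C_{\mathrm{Bar}(\mathcal{O})}(C) \to C$ of the adjunction from Theorem \ref{thm:koszul1} are equivalences. Recall that $B_\mathcal{O}$ is $\mathrm{TAQ}_\mathcal{O}$ equipped with its natural $\mathrm{Bar}(\mathcal{O})$-coalgebra structure, while $C_{\mathrm{Bar}(\mathcal{O})}$ computes derived primitives. Both composites admit explicit (co)simplicial descriptions: $C_{\mathrm{Bar}(\mathcal{O})}B_\mathcal{O}(X)$ is the totalization of the cobar resolution built from the comonad $F_{\mathrm{Bar}(\mathcal{O})}$ applied to $B_\mathcal{O}(X)$, and dually for $B_\mathcal{O} C_{\mathrm{Bar}(\mathcal{O})}(C)$.

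First I would verify the unit on free $\mathcal{O}$-algebras. For a connected $R$-module $M$, the object $B_\mathcal{O}(\mathrm{free}_\mathcal{O}(M))$ is equivalent to $M$ (since $\mathrm{TAQ}_\mathcal{O} \circ \mathrm{free}_\mathcal{O} \simeq \mathrm{id}$), and inspection of the induced coalgebra structure identifies it with the cofree $\mathrm{Bar}(\mathcal{O})$-coalgebra $F_{\mathrm{Bar}(\mathcal{O})}(M)$. The derived primitives $C_{\mathrm{Bar}(\mathcal{O})} F_{\mathrm{Bar}(\mathcal{O})}(M)$ then recover $\mathrm{free}_\mathcal{O}(M)$ by the explicit bar-cobar pairing $\mathrm{Cobar}(\mathrm{Bar}(\mathcal{O})) \simeq \mathcal{O}$ available for operads in $\mathrm{Mod}_R$. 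From this base case one bootstraps to arbitrary connected $X$ via the monadic bar resolution $X \simeq |\mathrm{Bar}(F_\mathcal{O},F_\mathcal{O},X)|$: since $B_\mathcal{O}$ preserves colimits, and the connectivity hypothesis ensures $C_{\mathrm{Bar}(\mathcal{O})}$ respects the relevant geometric realizations on connected inputs, the equivalence propagates from the resolution to $X$.

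The main obstacle is convergence. The cosimplicial object computing $C_{\mathrm{Bar}(\mathcal{O})}$ involves terms of the form $\bigoplus_k (\mathrm{Bar}(\mathcal{O})(k) \otimes X^{\otimes k})_{h\Sigma_k}$, and its totalization does not generally converge. This is precisely where the hypotheses enter: if $X$ is $n$-connected with $n \geq 0$, then $X^{\otimes k}$ is at least $(k(n+1)-1)$-connected, and combined with the connectivity of $R$ and of each $\mathcal{O}(m)$, the $k$-th stage contributes only above a linearly growing range of degrees. A standard connectivity-of-totalizations argument then shows that the Tot-tower converges, the successive fibers becoming arbitrarily connective, so that in any bounded range of degrees the totalization agrees with a finite stage. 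The same connectivity estimates are what allow $C_{\mathrm{Bar}(\mathcal{O})}$ to commute with the geometric realizations of bar resolutions used in the reduction step.

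Finally, the counit on connected coalgebras is handled dually. One resolves $C$ by its canonical cosimplicial cofree resolution $F_{\mathrm{Bar}(\mathcal{O})}^{\bullet+1}(C)$, reduces to the case of cofree coalgebras $F_{\mathrm{Bar}(\mathcal{O})}(N)$ with $N$ connected, verifies $\varepsilon$ there directly from the bar-cobar duality between $\mathcal{O}$ and $\mathrm{Bar}(\mathcal{O})$, and once again uses connectivity to interchange $B_\mathcal{O}$ with the relevant totalization. I expect the hardest part throughout to be the coordination of these convergence statements, so that the iterated bar-cobar formalism behaves well on both sides simultaneously; there is no single deep conceptual obstruction, but the connectivity bookkeeping needs to be carried out very carefully to ensure that every resolution genuinely converges in the connected range.
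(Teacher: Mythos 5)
The paper does not prove this theorem; it is quoted as a result of Ching--Harper, so there is no in-text proof to compare against. I will therefore assess your argument on its own terms.

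Your overall plan — verify the unit on free algebras, the counit on cofree coalgebras, and bootstrap via (co)simplicial resolutions, using connectivity to control the interchange of the relevant limits and colimits — is a sensible strategy and is broadly in the spirit of the literature. But the base-case computation contains a concrete error that makes the account internally inconsistent.

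You assert that $B_{\mathcal{O}}(\mathrm{free}_{\mathcal{O}}(M))$ has underlying object $M$ \emph{and} that inspection of the coalgebra structure identifies it with the cofree coalgebra $F_{\mathrm{Bar}(\mathcal{O})}(M)$. These two statements cannot both hold: the underlying object of $F_{\mathrm{Bar}(\mathcal{O})}(M)$ is $\bigoplus_{k\geq 1}(\mathrm{Bar}(\mathcal{O})(k)\otimes M^{\otimes k})_{h\Sigma_k}$, not $M$. The correct identification is that $B_{\mathcal{O}}(\mathrm{free}_{\mathcal{O}}(M))$ is the \emph{trivial} $\mathrm{Bar}(\mathcal{O})$-coalgebra on $M$. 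To see this, apply $\mathrm{TAQ}_{\mathcal{O}}$ to the unit $\mathrm{free}_{\mathcal{O}}(M)\to\mathrm{triv}_{\mathcal{O}}(M)$ of the $(\mathrm{TAQ}_{\mathcal{O}},\mathrm{triv}_{\mathcal{O}})$-adjunction; the triangle identity forces the resulting coaction $M\to F_{\mathrm{Bar}(\mathcal{O})}(M)$ to be a section of the projection, hence the inclusion of the $k=1$ summand, i.e.\ the trivial coaction. Dually, $C_{\mathrm{Bar}(\mathcal{O})}(F_{\mathrm{Bar}(\mathcal{O})}(N))$ is the \emph{trivial} $\mathcal{O}$-algebra on $N$ (with underlying object $N$), not the free $\mathcal{O}$-algebra on $N$; this identity holds with no hypotheses because the cobar resolution of a cofree coalgebra has extra codegeneracies. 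Your stated base case $C_{\mathrm{Bar}(\mathcal{O})}F_{\mathrm{Bar}(\mathcal{O})}(M)\simeq\mathrm{free}_{\mathcal{O}}(M)$ is therefore false already at the level of underlying objects.

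This misidentification also mislocates where the hypotheses do real work. The genuinely hard half of the base case is the isomorphism $C_{\mathrm{Bar}(\mathcal{O})}\bigl(\mathrm{triv}^{\mathrm{co}}(M)\bigr)\simeq\mathrm{free}_{\mathcal{O}}(M)$: the cobar construction applied to a trivial coalgebra has \emph{no} extra codegeneracy, so its convergence to $F_{\mathcal{O}}(M)$ is exactly where the connectivity of $M$ and of the terms $\mathcal{O}(n)$ enter, and is a nontrivial step that your sketch (having put the wrong coalgebra in the wrong place) passes over. By contrast the counit on cofree coalgebras is an equivalence for formal reasons and needs no connectivity at all. Once the base case is corrected, the framework you outline — bootstrap from free algebras along the bar resolution $\mathrm{Bar}(F_{\mathcal{O}},F_{\mathcal{O}},X)_\bullet$, using connectivity estimates to justify pulling $C_{\mathrm{Bar}(\mathcal{O})}$ (a totalization) past a geometric realization — is viable, though as you rightly flag, that interchange is precisely where the remaining technical content sits.
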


\begin{remark}
\label{rmk:tstructures}
In the presence of a useful notion of connectivity (such as a $t$-structure on $\mathcal{C}$), the strategy of proof of Theorem \ref{thm:chingharper} goes through much more generally.
\end{remark}

In particular, one may apply Theorem \ref{thm:chingharper} to the case where $R = H\mathbb{Q}$ and $\mathcal{O}$ is the Lie operad to retrieve Quillen's theorem \ref{thm:CE}.

\section{Spectral Lie algebras}
\label{sec:Lie}

\index{spectral Lie algebra}
In this section we describe an extension of the theory of Lie algebras to the $\infty$-category $\mathrm{Sp}$ of spectra. This extension has only recently begun to be exploited and promises to be very useful.

The non-unital commutative operad $\mathbf{Com}$ in the $\infty$-category of spectra is the operad parametrizing non-unital commutative ring spectra: it has $\mathbf{Com}(n) = \mathbb{S}$, the sphere spectrum, in every degree $n \geq 1$. The relevant structure maps 
\begin{equation*}
\mathbf{Com}(n) \otimes \mathbf{Com}(k_1) \otimes \cdots \otimes \mathbf{Com}(k_n) \rightarrow \mathbf{Com}(k_1 + \cdots + k_n)
\end{equation*}
are the canonical equivalences determined by the fact that $\mathbb{S}$ is the unit of the smash product. We can take the termwise Spanier--Whitehead dual $\mathbf{Com}^{\vee}$ to obtain the non-unital commutative cooperad, which of course still has every term equal to the sphere spectrum $\mathbb{S}$. The following definition is inspired by the duality between the Lie operad and the commutative cooperad in chain complexes described in Example \ref{ex:operads}:

\index{cobar construction}
\index{spectral Lie operad}
\begin{definition}
The \emph{spectral Lie operad} $\mathbf{L}$ is the cobar construction $\mathrm{Cobar}(\mathbf{Com}^{\vee})$. We write $\mathrm{Lie}(\mathrm{Sp})$ for the $\infty$-category of $\mathbf{L}$-algebras in $\mathrm{Sp}$ and refer to its objects as \emph{spectral Lie algebras}. 
\end{definition}

\begin{remark}
The commutative cooperad can be constructed in any symmetric monoidal $\infty$-category $\mathcal{C}$, since it only uses the unit of the symmetric monoidal structure. Hence one can make the above definition of Lie algebras in essentially any context, although one should usually require $\mathcal{C}$ to be stable for this to be useful.
\end{remark}

The operad $\mathbf{L}$ was first constructed by Salvatore \cite{salvatore} and Ching \cite{ching}. The spectra $\mathbf{L}(n)$ can be described very explicitly as the Spanier--Whitehead duals of certain finite simplicial complexes, which we will now explain. These descriptions originate in the work of Johnson on the Goodwillie derivatives of the identity \cite{johnson} (on which we will have more to say below) and were reformulated in the form we present below by Arone--Mahowald \cite{aronemahowald}. Write $\mathbf{P}(n)$ for the set of partitions of (meaning equivalence relations on) the set $\{1, \ldots, n\}$. We regard $\mathbf{P}(n)$ as a partially ordered set under refinement of partitions. It has a minimal (resp. maximal) element, namely the trivial partition with only one equivalence class (resp. the discrete partition). We write $\mathbf{P}^+(n)$ (resp. $\mathbf{P}^-(n)$) for the subset of $\mathbf{P}(n)$ obtained by discarding the minimal element (resp. the maximal element). Also, we write $\mathbf{P}^{\pm}(n)$ for the intersection $\mathbf{P}^+(n) \cap \mathbf{P}^-(n)$. Note that the symmetric group $\Sigma_n$ naturally acts on $\mathbf{P}(n)$, as well as on the various subsets we have defined. 

\index{partition complex}
\begin{definition}
The \emph{$n$th partition complex} $\Pi_n$ is the $\Sigma_n$-space $|\mathbf{P}^{\pm}(n)|$ obtained as the geometric realization of the nerve of the poset $\mathbf{P}^{\pm}$. Furthermore, we define
\begin{equation*}
K_n := |\mathbf{P}(n)|/(|\mathbf{P}^+(n)| \cup |\mathbf{P}^-(n)|).
\end{equation*}
\end{definition}

The space $K_n$ is homotopy equivalent to the double suspension $\Sigma S\Pi_n$ of the partition complex. Here $S$ denotes unreduced suspension, whereas $\Sigma$ denotes reduced suspension, with $S\Pi_n$ regarded as pointed at one of the two cone points.

\begin{example}
The space $\Pi_2$ is empty, so that $K_2$ is $S^1$ with trivial $\Sigma_2$-action. The space $\Pi_3$ is discrete and has three points, corresponding to the partition $(12)(3)$ and its permutations. As a $\Sigma_3$-space it is isomorphic to $\Sigma_3/\Sigma_2$. Hence $K_3$ is homotopy equivalent to a wedge of two 2-spheres, although this identification disregards the $\Sigma_3$-action. The space $\Pi_4$ is a one-dimensional simplicial complex homotopy equivalent to a wedge of six circles, so that $K_4$ is equivalent to a wedge of six 3-spheres. Again one should be careful that this is an identification of the homotopy type of the underlying space, disregarding the action of $\Sigma_4$.
\end{example}

The connection between the cohomology of partition complexes and Lie algebras goes back to work of Hanlon \cite{hanlon}, Stanley \cite{stanley}, Joyal \cite{joyallie}, and Barcelo \cite{barcelo}. Ching \cite{ching} established a topological refinement of this connection; to be precise, he observed that the terms of the cobar construction of the commutative cooperad are precisely the Spanier--Whitehead duals of the $K_n$:
\begin{equation*}
\mathbf{L}(n) \cong (\Sigma^\infty K_n)^{\vee}.
\end{equation*}
The operad structure of $\mathbf{L}$ is reflected in a cooperad structure on the collection of spaces $\{K_n\}_{\geq 1}$. Roughly speaking it can be described by observing that $K_n$ is homeomorphic to a certain space of weighted rooted trees with $n$ leaves, with comultiplication defined by decomposing trees into smaller subtrees grafted along a common edge. In general the space $K_n$ is homotopy equivalent to a wedge of $(n-1)!$ spheres of dimension $n-1$. Detailed results on the equivariant topology of the partition complexes can be found in the works of Arone--Dwyer \cite{aronedwyer} and Arone--Brantner \cite{aronebrantner}. 

Write $\mathbf{Lie}$ for the usual Lie operad in abelian groups. Taking the integral homology of the spectra $\mathbf{L}(n)$ gives an operad in graded abelian groups, which is precisely a degree shift of $\mathbf{Lie}$ thought of as sitting in homological degree zero. Indeed, as graded abelian groups one has
\begin{equation*}
H_*\mathbf{L}(n) \cong \mathbf{Lie}(n)[1-n],
\end{equation*}
but it is better to write
\begin{equation*}
H_*\mathbf{L}(n) \cong (\mathbf{Lie}(n)[1]) \otimes (\mathbb{Z}[-1])^{\otimes n}
\end{equation*}
to make the action of $\Sigma_n$ explicit. It acts by permuting the $n$ factors of $\mathbb{Z}[-1]$ on the right-hand side; said differently, the $\Sigma_n$-action on $H_*\mathbf{L}(n)$ is the same as the action on $\mathbf{Lie}(n)$ twisted by the sign representation. These identifications are compatible with the operad structures on both sides. One way to prove these facts is to relate the homology of the cobar construction that defines $\mathbf{L}$ to the algebraic cobar construction for the commutative cooperad in graded abelian groups.

\begin{remark}
This shifting of degree for an operad is a rather harmless procedure. Indeed, endowing a graded abelian group $M$ with the structure of an algebra for the operad $H_*\mathbf{L}$ is the same thing as giving $M[-1]$ the structure of a graded Lie algebra. The occurrence of these shifts also explains the degree shift occurring in Theorem \ref{thm:CE}.
\end{remark}

\index{Goodwillie tower of the identity}
A direct connection between the spectral Lie operad and the homotopy theory of spaces (and indeed, one of the original motivations for studying this operad) is given by the \emph{Goodwillie derivatives of the identity}. We refer to the chapter of Arone and Ching in this same volume for a survey of Goodwillie calculus, but let us summarize what is relevant for us here. The Goodwillie tower of the identity functor on the $\infty$-category $\mathcal{S}_*$ gives, for each pointed space $X$, a tower of spaces interpolating between $X$ and its stable homotopy type:
\[
\begin{tikzcd}
&& \vdots \ar{d} \\
&& P_3 X \ar{d} \\
&& P_2 X \ar{d} \\
X \ar{rr}\ar{urr}\ar{uurr} && P_1 X = \Omega^\infty\Sigma^\infty X.
\end{tikzcd}
\]
The fiber $D_n X$ of the map
\begin{equation*}
P_n X \rightarrow P_{n-1} X
\end{equation*}
is called the \emph{$n$th homogenous layer} of the tower. It turns out to be an infinite loop space $D_n X = \Omega^\infty \mathbf{D}_n X$ associated with a spectrum $\mathbf{D}_n X$ which can be written as 
\begin{equation*}
\mathbf{D}_n X \simeq (\partial_n \mathrm{id} \otimes \Sigma^\infty X^{\otimes n})_{h\Sigma_n}
\end{equation*}
for some spectrum $\partial_n \mathrm{id}$ with $\Sigma_n$-action called the \emph{$n$th derivative} of the identity functor. 

The connection to spectral Lie algebras is that $\partial_n \mathrm{id}$ can be identified with $\mathbf{L}(n)$. The homotopy type of $\partial_n \mathrm{id}$ was first determined by Johnson \cite{johnson}. We follow a line of reasoning due to Arone--Ching \cite{aroneching}, because it relates directly to our earlier discussion of cobar constructions. They prove that the map
\begin{equation*}
\mathrm{id} \rightarrow \mathrm{Tot}\bigl(\mathrm{Cobar}(\Omega^\infty, \Sigma^\infty\Omega^\infty, \Sigma^\infty)^{\bullet}\bigr)
\end{equation*}
arising from the cosimplicial resolution of the identity functor via $\Omega^\infty\Sigma^\infty$ (see Section \ref{sec:monads}) induces an equivalence of derivatives
\begin{equation*}
\partial_*\mathrm{id} \simeq \mathrm{Tot}\bigl(\mathrm{Cobar}(\partial_*\Omega^\infty, \partial_*(\Sigma^\infty\Omega^\infty), \partial_*\Sigma^\infty)^\bullet\bigr).
\end{equation*}
To interpret the right-hand side, one uses that the functors $\Omega^\infty$ and $\Sigma^\infty$ are linear (in Goodwillie's sense), that the derivatives of $\Sigma^\infty\Omega^\infty$ can be identified with the commutative cooperad $\mathbf{Com}^{\vee}$ (with the cooperad structure corresponding to the fact that this functor is a comonad), and that the chain rule for functors from the $\infty$-category $\mathrm{Sp}$ to itself states
\begin{equation*}
\partial_*(GF) \simeq \partial_*G \circ \partial_*F.
\end{equation*}
The right-hand side denotes the composition product of symmetric sequences. Putting these ingredients together gives
\begin{equation*}
\partial_*\mathrm{id} \simeq \mathrm{Tot}\bigl(\mathrm{Cobar}(\mathbf{1}, \mathbf{Com}^{\vee}, \mathbf{1})^\bullet\bigr) = \mathrm{Cobar}(\mathbf{Com}^{\vee})
\end{equation*}
and the right-hand side is precisely our definition of $\mathbf{L}$. Thus, the Goodwillie tower produces a spectral sequence converging to the homotopy groups of $X$, starting from the homotopy groups of the spectrum
\begin{equation*}
\bigoplus_{n \geq 1} (\mathbf{L}(n) \otimes \Sigma^\infty X^{\otimes n})_{h\Sigma_n},
\end{equation*}
which is the \emph{free} spectral Lie algebra on the suspension spectrum of $X$. This \emph{Goodwillie spectral sequence} has been used very succesfully by Behrens \cite{behrensEHP}, who combines it with the EHP sequence to reproduce a significant part of Toda's calculations of unstable homotopy groups of spheres.

A starting point for most calculations with the Goodwillie tower is the homology of the spectra $\mathbf{D}_n X$, which has been studied in great detail by Arone--Mahowald \cite{aronemahowald} and Arone--Dwyer \cite{aronedwyer} in the case where $X$ is a sphere. For example, when this sphere is of odd dimension then the layers $\mathbf{D}_n X$ are contractible whenever $n$ is not a power of a prime. When $n=2^k$, its cohomology with coefficients in $\mathbb{F}_2$ is free over the subalgebra $\mathcal{A}^*_{k-1}$ of the Steenrod algebra $\mathcal{A}^*$ generated by $\mathrm{Sq}^1, \ldots, \mathrm{Sq}^{2^{k-1}}$ (and a similar statement holds at odd primes). This has very useful consequences for the analysis of the $v_n$-periodic homotopy groups of the Goodwillie tower. Also, the spaces $\Pi_{p^k}$ are closely related to Tits buildings for the groups $\mathrm{GL}_k(\mathbb{F}_p)$, and using this Arone--Dwyer relate the calculation of the mod $p$ homology of $\mathbf{D}_{p^k}(X)$ to that of $\mathrm{GL}_k(\mathbb{F}_p)$ with coefficients in the Steinberg module. The reader can find a much more elaborate discussion of these results in the chapter of Arone--Ching in this same volume. These homology calculations also lead to a theory of power operations for spectral Lie algebras; we refer to Behrens \cite{behrensEHP} and Antol\'{i}n-Camarena \cite{antolincamarena} for the case of $\mathbb{F}_2$-coefficients, Kjaer \cite{kjaer} for $\mathbb{F}_p$-coefficients with $p > 2$, and Brantner \cite{brantner} for the case of Morava $E$-theory (in particular including $p$-complete complex $K$-theory).

\section{Periodic unstable homotopy theory}
\label{sec:periodicity}

\index{chromatic homotopy theory}
In this section we discuss some fundamental concepts of chromatic homotopy theory and emphasize their role in unstable homotopy theory. The reader can find a much more thorough exposition of the chromatic perspective on stable homotopy theory in the chapter of Barthel and Beaudry in this volume or consult some of the standard references \cite{devinatzhopkinssmith,hopkins,hopkinssmith,ravenelgreen,ravenelorange}.

The rational homotopy groups of a pointed space $X$ are the result of considering the homotopy classes of maps $[S^k, X]_*$ from spheres to $X$ and subsequently inverting the action of the degree $p$ maps $p: S^k \rightarrow S^k$ for all primes $p$. As demonstrated by Serre's calculation of the rational homotopy groups of spheres and the rational homotopy theory of Quillen and Sullivan, these rational homotopy groups are surprisingly tractable invariants.

If one wants to move beyond rational homotopy theory, a natural starting point is the `mod $p$ homotopy groups' $[S^k/p, X]_* =: \pi_{k-1}(X; \mathbb{Z}/p)$, where $S^k/p$ denotes the cofiber of the degree $p$ map on $S^k$ (i.e. a mod $p$ Moore space). These are groups when $k \geq 2$, which are abelian if $k \geq 3$. The complexity of calculating them is on par with that of the usual homotopy groups. However, it turns out one can again invert the action of a certain map to make the problem more tractable. To be precise, there is a certain map
\begin{equation*}
\alpha: \Sigma^d S^k/p \rightarrow S^k/p
\end{equation*}
seemingly due to Barratt, described by Adams in \cite{adams}. Here for $p$ odd one has $d = 2(p-1)$ and $k \geq 3$, and for $p=2$ one should take $d=8$ and $k \geq 5$. The crucial feature of this map is that it induces an isomorphism on complex $K$-theory. In particular, any number of iterates of (suspensions of) $\alpha$ is \emph{not} null-homotopic. In fact, $\alpha$ induces multiplication by the $(p-1)$st power of the Bott class (for $p$ odd) or its fourth power (for $p=2$). As such one can think of it as a geometric manifestation of Bott periodicity. The map $\alpha$ provides an action of the graded ring $\mathbb{Z}[\alpha]$ (with $|\alpha| = d$) on the graded abelian group $\pi_*(X; \mathbb{Z}/p)$ (with $\ast \geq 2$) and one defines the \emph{$v_1$-periodic mod $p$ homotopy groups} of $X$ to be
\begin{equation*}
v_1^{-1}\pi_*(X; \mathbb{Z}/p) := \mathbb{Z}[\alpha^{\pm 1}] \otimes_{\mathbb{Z}[\alpha]} \pi_*(X; \mathbb{Z}/p).
\end{equation*}
The rational and $v_1$-periodic homotopy groups of spaces form the beginning of a hierarchy of \emph{$v_n$-periodic homotopy groups}, which we will define shortly. This hierarchy is closely related to the sequence of `prime' localizations $L_{H\mathbb{Q}}$, $L_{K(1)}$, $L_{K(2)}$, $\ldots$, of stable homotopy theory at the Morava $K$-theories, which play the role of prime fields in the $\infty$-category of spectra. The latter picture is discussed in detail in the chapter of Barthel--Beaudry in this volume. While the focus for them is mostly on these chromatic localizations (and the closely related localizations $L_n$), for us the fundamental ingredient of chromatic homotopy theory will be the periodicity results of Hopkins and Smith \cite{hopkinssmith} (and the finite localizations $L_n^f$). We begin with a brief recollection of the thick subcategory theorem.

\index{Morava $K$-theory}
From now on we fix a prime $p$ and work in the $\infty$-categories of $p$-local spectra and spaces. We will often leave the adjective $p$-local implicit. We say a spectrum $X$ is \emph{of type $\geq n$} if $K(i)_*X = 0$ for $i < n$, and we say $X$ is \emph{of type $n$} if it is of type $\geq n$ and additionally $K(n)_*X \neq 0$. It turns out that if a finite spectrum $X$ has $K(n-1)_*X = 0$ then also $K(i)_*X = 0$ for $i < n - 1$, so that the former is a sufficient condition for $X$ to be of type $\geq n$. Write $\mathrm{Sp}^{\mathrm{fin}}_{(p)}$ for the $\infty$-category of $p$-local finite spectra and $\mathrm{Sp}^{\mathrm{fin}}_{\geq n}$ for the subcategory of those $X$ which are of type $\geq n$. This subcategory is \emph{thick}: if two terms in a cofiber sequence are contained in it, then so is the third, and moreover it is closed under retracts. These thick subcategories form a nested sequence
\begin{equation*}
\cdots \subseteq \mathrm{Sp}^{\mathrm{fin}}_{\geq n+1} \subseteq \mathrm{Sp}^{\mathrm{fin}}_{\geq n} \subseteq \cdots \subseteq  \mathrm{Sp}^{\mathrm{fin}}_{\geq 0}  = \mathrm{Sp}^{\mathrm{fin}}_{(p)}.
\end{equation*}
The fact that these are proper inclusions is a highly nontrivial result of Mitchell \cite{mitchell}: for every $n$ there exists a finite spectrum of type $n$. The following explains the fundamental importance of this notion for stable homotopy theory:
\index{thick subcategory}

\begin{theorem}[The thick subcategory theorem, Hopkins--Smith \cite{hopkinssmith}]
Every thick subcategory of $\mathrm{Sp}^{\mathrm{fin}}_{(p)}$ is of the form $\mathrm{Sp}^{\mathrm{fin}}_{\geq n}$ for some $n$.
\end{theorem}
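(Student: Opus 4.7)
The plan is to prove, for any thick subcategory $\mathcal{T} \subseteq \mathrm{Sp}^{\mathrm{fin}}_{(p)}$, the equality $\mathcal{T} = \mathrm{Sp}^{\mathrm{fin}}_{\geq n(\mathcal{T})}$, where $n(\mathcal{T})$ denotes the smallest type attained by a nonzero object of $\mathcal{T}$ (with the convention $n(\{0\}) = \infty$ and $\mathrm{Sp}^{\mathrm{fin}}_{\geq \infty} = \{0\}$). The forward inclusion $\mathcal{T} \subseteq \mathrm{Sp}^{\mathrm{fin}}_{\geq n(\mathcal{T})}$ is tautological: each nonzero $Z \in \mathcal{T}$ has a well-defined type, which is at least $n(\mathcal{T})$ by minimality, so $Z$ lies in $\mathrm{Sp}^{\mathrm{fin}}_{\geq n(\mathcal{T})}$.

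The reverse inclusion is the substantive content. Writing $n = n(\mathcal{T})$ and fixing a witness $X \in \mathcal{T}$ of type exactly $n$ (the case $n = \infty$ is vacuous), it suffices to establish the following \emph{class invariance} statement: for any finite $p$-local spectrum $Y$ of type $\geq n$, one has $Y \in \langle X \rangle$, where $\langle X \rangle$ denotes the thick subcategory generated by $X$. Granting this, every $Y \in \mathrm{Sp}^{\mathrm{fin}}_{\geq n}$ lies in $\langle X \rangle \subseteq \mathcal{T}$, completing the proof. The proof of class invariance invokes the two deepest inputs from chromatic stable homotopy theory: the \emph{periodicity theorem} of Hopkins--Smith, which equips every finite type $n$ spectrum $W$ with a $v_n$-self map $v\colon \Sigma^d W \to W$ acting as an isomorphism on $K(n)$-homology and as zero on $K(m)$-homology for $m \neq n$, and the \emph{nilpotence theorem} of Devinatz--Hopkins--Smith, which says that a self-map of a finite spectrum invisible to every Morava $K$-theory must be smash-nilpotent.

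Concretely, a short cellular induction shows that $\langle X \rangle$ is closed under smashing with arbitrary finite $p$-local spectra; in particular $X \wedge DY \in \langle X \rangle$, where $DY$ denotes the Spanier--Whitehead dual of $Y$. The periodicity theorem equips $X \wedge DY$ with a $v_n$-self map $v$, which the nilpotence theorem certifies is not smash-nilpotent whenever $Y$ has type exactly $n$, since $v$ is then detected on $K(n)$-homology; across the adjunction $[\Sigma^N X, Y]_* \cong \pi_*(X \wedge DY)$ this self-map and its iterates translate into essential maps $f\colon \Sigma^N X \to Y$, whose cofibers remain in $\langle X \rangle$. Iterating this construction produces in $\langle X \rangle$ a sequence of cofibers of strictly increasing type, and an induction on type, terminating by the nilpotence-theoretic observation that a finite $p$-local spectrum with all Morava $K$-theories vanishing is contractible, completes the argument. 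The main obstacle is unmistakably the class invariance step itself, whose proof genuinely requires the combined force of periodicity and nilpotence; granting those, the remainder is a formal thick-subcategory exercise together with an induction on chromatic height.
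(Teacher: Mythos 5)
The paper states this theorem without proof, citing Hopkins--Smith, so I will assess your argument on its own terms. Your reduction to the class invariance statement, and the observation that $\langle X \rangle$ is closed under smashing with finite $p$-local spectra, are both correct, but the proof of class invariance as written has several genuine gaps. The cited adjunction is wrong: $[\Sigma^N X, Y] \cong \pi_N(DX \wedge Y)$, not $\pi_N(X \wedge DY)$, and this cannot be silently fixed because the two objects play different roles. The spectrum $X \wedge DY$ lies in $\langle X \rangle$ (smash $X$ with a finite spectrum), whereas $DX \wedge Y$ lies in $\langle X \rangle$ only if $Y$ already does, which is precisely the conclusion sought. More seriously, the assertion that the cofibers of your maps $f\colon \Sigma^N X \to Y$ ``remain in $\langle X \rangle$'' is circular, since the cofiber of a map from an object of $\langle X \rangle$ into $Y$ is guaranteed to lie in $\langle X \rangle$ only when $Y$ does. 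The proposed ``induction on type'' also lacks a termination mechanism: type takes all values in $\mathbb{N} \cup \{\infty\}$, and nothing forces the construction to reach a contractible spectrum after finitely many steps. Finally, the periodicity theorem is not in fact an input to the thick subcategory theorem; Hopkins--Smith derive it from the nilpotence theorem alone, with periodicity a separate and harder consequence.

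The missing idea is a retract trick, not an induction on type. Set $R = DX \wedge X$, a ring spectrum with unit $u\colon \mathbb{S} \to R$, and let $\bar{R}$ be the cofiber of $u$. The boundary map $\Sigma^{-1}\bar{R} \to \mathbb{S}$, smashed with $Y$, vanishes on every Morava $K$-theory: for $m < n$ because $K(m)_*Y = 0$, and for $m \geq n$ because $u$ is split injective on $K(m)_*$, being the unit of a nonzero $K(m)_*$-algebra. The smash nilpotence theorem then forces the $k$-fold iterate $\Sigma^{-k}\bar{R}^{\wedge k} \wedge Y \to Y$ to be null for $k$ large. On the other hand, a genuine induction on $k$ (using the octahedral axiom) shows that the cofiber $C_k$ of this iterate lies in $\langle X \rangle$, since the successive layers are of the form $\Sigma^{-j}\bar{R}^{\wedge j} \wedge R \wedge Y$, i.e.\ $R$ smashed with a finite spectrum, hence in $\langle X \rangle$. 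When the iterate is null, $Y$ becomes a retract of $C_k$, and closure of $\langle X \rangle$ under retracts completes the proof of class invariance.
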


The filtration of the $\infty$-category of finite $p$-local spectra above also gives a filtration
\begin{equation*}
\cdots \subseteq \mathrm{Sp}_{\geq n+1} \subseteq \mathrm{Sp}_{\geq n} \subseteq \cdots \subseteq \mathrm{Sp}_{\geq 0}  = \mathrm{Sp}_{(p)}
\end{equation*}
of the $\infty$-category of all (not necessarily finite) $p$-local spectra by localizing subcategories. Here we write $\mathrm{Sp}_{\geq n}$ for the smallest subcategory of $\mathrm{Sp}$ which contains $\mathrm{Sp}^{\mathrm{fin}}_{\geq n}$ and is closed under colimits. As with any filtration, the goal is now to understand its `associated graded' and subsequently investigate how the layers fit together. In this survey we will be almost exclusively concerned with the first aspect.

To define what we mean by associated graded, first observe that the tower of subcategories above gives a corresponding tower of localizations
\begin{equation*}
\cdots \rightarrow L_n^f \mathrm{Sp} \rightarrow L_{n-1}^f \mathrm{Sp} \rightarrow \cdots \rightarrow L_0^f \mathrm{Sp} = \mathrm{Sp}_{\mathbb{Q}}.
\end{equation*}
Here $L_n^f \mathrm{Sp}$ is the localization (in the sense of Definition \ref{def:localization}) of $\mathrm{Sp}_{(p)}$ at the set of maps $f\colon X \rightarrow Y$ whose cofiber is contained in $\mathrm{Sp}_{\geq n+1}$. Said differently, it is the quotient $\mathrm{Sp}_{(p)}/\mathrm{Sp}_{\geq n+1}$ computed in the $\infty$-category of stable $\infty$-categories and exact functors. Since $\mathrm{Sp}_{\geq n+1}$ is closed under colimits in $\mathrm{Sp}_{(p)}$, all these localizations are reflective. We write $L_n^f\colon \mathrm{Sp}_{(p)} \rightarrow \mathrm{Sp}_{(p)}$ for the composite of the localization functor $\mathrm{Sp}_{(p)} \rightarrow L_n^f \mathrm{Sp}$ and its right adjoint. With this notation we have natural transformations $L_n^f \rightarrow L_{n-1}^f$. As a consequence of the thick subcategory theorem, one can in fact characterize $L_n^f$ as the localization functor which kills a \emph{single} finite type $n+1$ spectrum $V$. The layers of our filtration can now be described as follows:

\index{$v_n$-periodic}
\begin{definition}
The $\infty$-category $\mathrm{Sp}_{v_n}$ of \emph{$v_n$-periodic spectra} is the quotient $\mathrm{Sp}_{\geq n}/\mathrm{Sp}_{\geq n+1}$, meaning it is the universal stable $\infty$-category which receives an exact functor from $\mathrm{Sp}_{\geq n}$ which is identically zero on the subcategory $\mathrm{Sp}_{\geq n+1}$.
\end{definition}

\index{$v_n$ self-map}
This definition might seem abstract, but we will recast it in more concrete (and perhaps more familiar) terms using another fundamental result of Hopkins and Smith. To state it we need some terminology. A \emph{$v_n$ self-map} of a $p$-local spectrum $X$ is a map $v\colon \Sigma^d X \rightarrow X$, for some integer $d \geq 0$, with the property that $K(n)_*v$ is an isomorphism and $K(m)_*v$ is nilpotent whenever $m \neq n$. If $n=0$ one always has $d=0$ and one should require that $v$ acts by multiplication by a rational number on $K(0)_*X = H_*(X;\mathbb{Q})$. For general $n\geq 1$, replacing $v$ by a sufficiently high power if necessary one can always arrange that on $K(n)_*X$ it acts by a power of $v_n \in K(n)_*$ and by zero on $K(m)_*X$ for $m \neq n$. The typical examples to keep in mind are the following: multiplication by $p$ is a $v_0$ self-map (and exists for any spectrum), whereas the Adams map $\alpha$ is a $v_1$ self-map of the mod $p$ Moore spectrum, which is of type 1.

\begin{theorem}[The periodicity theorem, Hopkins--Smith \cite{hopkinssmith}]
\label{thm:periodicity}
If $X$ is a finite spectrum of type $\geq n$, then it admits a $v_n$ self-map. Furthermore, $v_n$ self-maps are asymptotically unique in the sense that for $f\colon X \rightarrow Y$ a map of finite spectra and $v_n$ self-maps $v,w$ of $X,Y$ respectively, there exist $M,N \gg 0$ for which the square
\[
\begin{tikzcd}
\Sigma^{Md} X \ar{d}{v^N} \ar{r}{f} & \Sigma^{Ne} Y \ar{d}{w^M} \\
X \ar{r}{f} & Y
\end{tikzcd}
\]
commutes up to homotopy, where $d$ and $e$ are the degrees of $v$ and $w$ respectively. In particular, taking $f$ to be the identity, any two $v_n$ self-maps on $X$ become homotopic after sufficiently many iterations.
\end{theorem}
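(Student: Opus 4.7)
The plan is to leverage the thick subcategory theorem together with the nilpotence theorem of Devinatz--Hopkins--Smith, and to prove asymptotic uniqueness first so that it can be used to propagate existence through thick subcategories.

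For asymptotic uniqueness, suppose $v\colon \Sigma^d X \to X$ and $w\colon \Sigma^e Y \to Y$ are $v_n$ self-maps of finite spectra and $f\colon X\to Y$ is arbitrary. I would work in the mapping spectrum $F(X,Y)$, on which precomposition with iterates of $v$ and postcomposition with iterates of $w$ define two self-maps. After replacing $v$ and $w$ by suitable powers one can arrange that they act by the same power of $v_n$ on $K(n)_*$, so the difference of these two self-maps is zero on $K(n)$-homology; on $K(m)_*$ for $m\neq n$ both summands act nilpotently by hypothesis, so the difference acts nilpotently there as well. The nilpotence theorem then forces this difference self-map of $F(X,Y)$ to be nilpotent on homotopy groups, and evaluating on $f \in \pi_*F(X,Y)$ gives precisely the asserted asymptotic commutativity of the square. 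Specializing to $X=Y$ and $f=\mathrm{id}$ yields the final statement that any two $v_n$ self-maps on a single spectrum share a common iterate.

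For existence, let $\mathcal{V}_n \subseteq \mathrm{Sp}^{\mathrm{fin}}_{\geq n}$ be the full subcategory of those finite $p$-local spectra that admit a $v_n$ self-map. I would verify that $\mathcal{V}_n$ is thick. Closure under retracts is immediate: if $X$ is a retract of $Y \in \mathcal{V}_n$ via maps $i\colon X\to Y$ and $r\colon Y\to X$, then $r \circ v_Y^N \circ i$ is a self-map of $X$ whose Morava $K$-theory action is a retract of that of $v_Y^N$, hence is a $v_n$ self-map. Closure under cofibers: given $X\to Y\to Z$ with $v_n$ self-maps $v_X$ on $X$ and $v_Y$ on $Y$, the asymptotic uniqueness proved above produces common iterates fitting into a morphism of cofiber sequences, and the induced self-map on $Z$ is seen to be a $v_n$ self-map by chasing the long exact sequences in $K(m)$-homology. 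By the thick subcategory theorem, to conclude it now suffices to exhibit a single type $n$ spectrum carrying a $v_n$ self-map.

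The main obstacle is precisely this last step, which is the genuine arithmetic heart of the theorem. I would proceed by induction on $n$, constructing a generalized Moore spectrum $V = V(i_0,i_1,\ldots,i_{n-1})$ realizing the quotient of $BP$ by a regular sequence, with $BP_*V \cong BP_*/(p^{i_0}, v_1^{i_1}, \ldots, v_{n-1}^{i_{n-1}})$, obtained by iteratively coning off $v_k$ self-maps on type-$k$ predecessors whose existence is furnished by the inductive hypothesis. The algebraic action of $v_n$ on $BP_*V$ must then be lifted to a genuine self-map of $V$, and the obstructions to such a lift live in successive filtration quotients of the Adams--Novikov spectral sequence for the endomorphism ring spectrum of $V$. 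The key point is that by taking the exponents $i_k$ large enough one can force these obstruction classes to lie above a vanishing line for the Adams--Novikov $E_2$-term, so that they must vanish. Establishing the relevant vanishing line estimates and carrying out the delicate inductive choice of the $i_k$ is by far the most technical step; the surrounding formal infrastructure is just the packaging with the nilpotence theorem and thick subcategory theorem described above.
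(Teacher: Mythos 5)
The paper does not prove this theorem; it is quoted as a black box with a citation to Hopkins--Smith, so there is no ``paper proof'' to compare against. Judged on its own, your outline is a faithful reconstruction of the Hopkins--Smith strategy: deduce asymptotic uniqueness from the nilpotence theorem, use it to show that the class of finite $p$-local spectra admitting a $v_n$ self-map is thick, and reduce existence to exhibiting a single type-$n$ complex with a $v_n$ self-map, which is the real arithmetic content.

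One step is stated incorrectly rather than merely compressed. After normalizing degrees and $K(n)$-actions, the nilpotence theorem (self-map form, so one needs $K(m)_*(D)$ nilpotent for all $0 \le m \le \infty$; the case $m = \infty$ is automatic because $D$ has positive degree and $H\mathbb{F}_{p*}F(X,Y)$ is finite-dimensional) gives that $D = w_* - v^*$ is a nilpotent self-map of the finite spectrum $F(X,Y)$, say $D^{\circ k} \simeq 0$. But ``evaluating on $f$'' then yields the binomial identity
\begin{equation*}
\sum_{j=0}^{k} (-1)^j \binom{k}{j}\, w^{k-j} f\, v^{j} = 0,
\end{equation*}
which is not the asserted commuting square. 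To extract $w^{M} f \simeq f v^{N}$ one must additionally use that $w_*$ and $v^*$ commute as operators on $\pi_* F(X,Y)$ (automatic here, since one is post- and the other pre-composition) and that $\pi_* F(X,Y)$ has bounded $p$-torsion in the relevant degrees, so that choosing $k = p^m$ with $m \gg 0$ makes all the middle binomial coefficients divisible by the torsion exponent and kills the cross terms, leaving $w^{p^m} f \simeq f v^{p^m}$. Without this extra torsion/binomial argument the conclusion does not follow. The remaining elisions (the $5$-lemma chase in Morava $K$-theory for the cofiber case, and the generalized Smith--Toda construction with the vanishing-line obstruction argument --- which in the original is run in the classical mod $p$ Adams spectral sequence rather than Adams--Novikov, though both appear in the literature) are honestly flagged as such and are consistent with the known proof.
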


Now suppose $V$ is a finite spectrum of type $n$ and $v\colon \Sigma^d V \rightarrow V$ is a $v_n$ self-map. For any spectrum $X$ we can define its $v$-periodic homotopy groups by
\begin{equation*}
v^{-1}\pi_*(X; V) := \mathbb{Z}[v^{\pm 1}] \otimes_{\mathbb{Z}[v]}\pi_*\mathrm{Map}(V,X).
\end{equation*}
An equivalent description is as follows. Since $V$ is finite, the mapping spectrum $F(V,X)$ is equivalent to the smash product $V^{\vee} \otimes X$, with $V^{\vee}$ the Spanier--Whitehead dual of $V$. Form the \emph{telescope}
\begin{equation*}
v^{-1}V^{\vee} := \varinjlim(V^{\vee} \xrightarrow{v^{\vee}} \Sigma^{-d}V^{\vee} \xrightarrow{v^{\vee}} \cdots).
\end{equation*}
Then $v^{-1}\pi_*(X; V) \cong \pi_*(v^{-1}V^{\vee} \otimes X)$.

\index{$v_n$-periodic equivalence}
\begin{definition}
A map of spectra is a \emph{$v_n$-periodic equivalence} if it induces an isomorphism on $v$-periodic homotopy groups.
\end{definition}

This definition only depends on $n$; indeed, the thick subcategory theorem implies that it is independent of the choice of $V$, whereas the asymptotic uniqueness of $v_n$ self-maps gives independence of the choice of $v$. It is common to write $T(n) = v^{-1}V^{\vee}$, so that a $v_n$-periodic equivalence is by definition a $T(n)_*$-equivalence. The notation is a little ambiguous, because $T(n)$ depends on choices. However, the associated notion of equivalence does not. The basic facts to keep in mind are the following:
\begin{itemize}
\item[(a)] As a consequence of the periodicity theorem, any $v_n$ self-map of a finite spectrum of type $\geq n$ is a $v_n$-periodic equivalence.
\item[(b)] If $W$ is a finite spectrum of type $\geq n+1$, then $W \rightarrow 0$ is a $v_n$-periodic equivalence. This is immediate from (a) and the fact that the null map $W \xrightarrow{0} W$ is a $v_n$ self-map.
\end{itemize}

We now characterize $\mathrm{Sp}_{v_n}$ in terms of the $v_n$-periodic equivalences and describe two ways in which it can be realized as a full subcategory of $\mathrm{Sp}_{(p)}$:

\begin{proposition}
\label{prop:MnfvsTn}
The $\infty$-category $\mathrm{Sp}_{v_n}$ of $v_n$-periodic spectra is the localization (in the sense of Definition \ref{def:localization}) of $\mathrm{Sp}_{(p)}$ at the $v_n$-periodic equivalences. It is equivalent to the following two subcategories of $\mathrm{Sp}_{(p)}$:
\begin{itemize}
\item[(1)] The full subcategory $\mathrm{Sp}_{T(n)}$ of $T(n)$-local spectra in the sense of Bousfield.
\item[(2)] The full subcategory $M_n^f\mathrm{Sp}$ of spectra of the form $M_n^f X$, where $M_n^f$ denotes the fiber of the natural transformation $L_n^f \rightarrow L_{n-1}^f$.
\end{itemize}
Furthermore, the functors
\[
\begin{tikzcd}
M_n^f\mathrm{Sp} \ar[shift left]{r}{L_{T(n)}} & \mathrm{Sp}_{T(n)} \ar[shift left]{l}{M_n^f}
\end{tikzcd}
\]
are mutually inverse equivalences.
\end{proposition}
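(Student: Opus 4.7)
The plan is to break the proof into three steps: first identifying $\mathrm{Sp}_{v_n}$ with the full subcategory $M_n^f\mathrm{Sp}$, then comparing $M_n^f\mathrm{Sp}$ with $\mathrm{Sp}_{T(n)}$ via the functors $L_{T(n)}$ and $M_n^f$, and finally recognizing the resulting composite as the localization at $v_n$-periodic equivalences.

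For the first identification, I would begin by observing that $\mathrm{Sp}_{\geq n}$ coincides with the kernel of $L_{n-1}^f$, since both are the localizing subcategory of $\mathrm{Sp}_{(p)}$ generated by $\mathrm{Sp}_{\geq n}^{\mathrm{fin}}$. On this subcategory the defining fiber sequence $M_n^f \to L_n^f \to L_{n-1}^f$ degenerates, and $M_n^f|_{\mathrm{Sp}_{\geq n}} \simeq L_n^f|_{\mathrm{Sp}_{\geq n}}$ exhibits $M_n^f\mathrm{Sp}$ as the reflective localization of $\mathrm{Sp}_{\geq n}$ at maps with cofiber in $\mathrm{Sp}_{\geq n+1}$. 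The universal property of the quotient $\mathrm{Sp}_{\geq n}/\mathrm{Sp}_{\geq n+1}$ then produces an equivalence $\mathrm{Sp}_{v_n} \simeq M_n^f\mathrm{Sp}$.

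Next, to identify $M_n^f\mathrm{Sp}$ with $\mathrm{Sp}_{T(n)}$, I would establish two basic inputs. First, every object of $\mathrm{Sp}_{\geq n+1}$ is $T(n)$-acyclic: for a finite type-$\geq n+1$ spectrum $Y$ each $K(m)_*(v \otimes \mathrm{id}_Y)$ vanishes (using $K(n)_*Y = 0$ and the fact that $v$ is a $v_n$ self-map), so the nilpotence theorem forces $v\otimes\mathrm{id}_Y$ to be nilpotent and thus $T(n)\otimes Y = 0$; this extends to all of $\mathrm{Sp}_{\geq n+1}$ because $T(n)$-acyclics form a localizing ideal. Second, every $L_{n-1}^f$-local spectrum is $T(n)$-acyclic: since $L_{n-1}^f$ is smashing and $T(n) \in \mathrm{Sp}_{\geq n}$, one has $T(n)\otimes L_{n-1}^f Z \simeq L_{n-1}^f T(n) \otimes Z = 0$. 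Together these give $L_{T(n)}L_n^f \simeq L_{T(n)}$ and $L_{T(n)}L_{n-1}^f \simeq 0$, so applying $L_{T(n)}$ to the defining fiber sequence yields $L_{T(n)}M_n^f \simeq L_{T(n)}$; in particular $L_{T(n)}M_n^f X \simeq X$ whenever $X$ is already $T(n)$-local, producing one of the two composites.

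The hard direction will be showing $M_n^f L_{T(n)} X \simeq X$ on $M_n^f\mathrm{Sp}$, equivalently that $M_n^f$ kills every $T(n)$-acyclic. Since $M_n^f$ is smashing, this reduces to the Bousfield-class identity $\langle M_n^f S\rangle = \langle T(n)\rangle$. One direction is straightforward from $M_n^f T(n) \simeq T(n)$, which uses that $T(n)$ is itself both $L_n^f$-local (a further application of the nilpotence theorem to $\mathrm{Map}(W, T(n))$ for finite $W$ of type $\geq n+1$) and $L_{n-1}^f$-acyclic. The reverse inequality is the main obstacle, and I would handle it by combining the smashing-localization identity $\langle L_n^f S\rangle = \bigvee_{m=0}^n \langle T(m)\rangle$ with the fiber sequence $M_n^f S \to L_n^f S \to L_{n-1}^f S$. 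Once the equivalence $M_n^f\mathrm{Sp} \simeq \mathrm{Sp}_{T(n)}$ is in place, the final claim that $\mathrm{Sp}_{v_n}$ is the localization of $\mathrm{Sp}_{(p)}$ at the $v_n$-periodic equivalences is immediate from the universal property of Bousfield localization together with the identification of $T(n)_*$-equivalences with $v_n$-periodic equivalences.
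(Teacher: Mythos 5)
Your overall decomposition is sound and the first identification $\mathrm{Sp}_{v_n}\simeq M_n^f\mathrm{Sp}$ matches the paper's (recognizing $M_n^f\mathrm{Sp}$ as the image of $L_n^f$ restricted to $\mathrm{Sp}_{\geq n}$ and invoking the universal property of the Verdier quotient). Your two preliminary observations -- that finite type $\geq n+1$ spectra are $T(n)$-acyclic by the nilpotence theorem, and that $L_{n-1}^f$-local spectra are $T(n)$-acyclic because $T(n)\in\mathrm{Sp}_{\geq n}$ and $L_{n-1}^f$ is smashing -- are exactly the two facts the paper needs to see that $X\to L_n^f X$ and $M_n^f X\to L_n^f X$ are $T(n)_*$-equivalences. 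So far so good, though note that in your nilpotence argument $K(m)_*(v\otimes\mathrm{id}_Y)$ does not \emph{vanish} for $m\neq n$; it is \emph{nilpotent} (it vanishes only for $m=n$, where both source and target are zero).

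The genuine departure from the paper is how you handle the ``hard direction,'' and it is here that the sketch is under-developed. You reduce to the Bousfield-class inequality $\langle M_n^f\mathbb{S}\rangle\leq\langle T(n)\rangle$ and propose to get it by ``combining the smashing-localization identity $\langle L_n^f\mathbb{S}\rangle=\bigvee_{m=0}^n\langle T(m)\rangle$ with the fiber sequence $M_n^f\mathbb{S}\to L_n^f\mathbb{S}\to L_{n-1}^f\mathbb{S}$.'' As written this does not close: from the fiber sequence one only gets $\langle M_n^f\mathbb{S}\rangle\leq\bigvee_{m\leq n}\langle T(m)\rangle$, and knowing that $L_n^f Y$ and $L_{n-1}^f Y$ have the same Bousfield class when $T(n)_*Y=0$ does not show that the localization map between them is an equivalence. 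To make your route work you need two further inputs: that $M_n^f T(m)\simeq 0$ for $m<n$ (because $T(m)$ is already $L_{n-1}^f$-local, so $L_n^f T(m)\to L_{n-1}^f T(m)$ is the identity), and that $\langle M_n^f\mathbb{S}\rangle$ is idempotent under smash; then $\langle M_n^f\mathbb{S}\rangle=\langle M_n^f\mathbb{S}\rangle\wedge\langle M_n^f\mathbb{S}\rangle\leq\langle M_n^f\mathbb{S}\rangle\wedge\bigvee_{m\leq n}\langle T(m)\rangle=\langle M_n^f\mathbb{S}\rangle\wedge\langle T(n)\rangle\leq\langle T(n)\rangle$. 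Alternatively -- and this is what the paper is implicitly doing -- there is a much more direct argument that bypasses the smashing identity entirely: if $T(n)\otimes Y=0$ then $\mathrm{Map}(F(n),L_n^f Y)\simeq F(n)^\vee\otimes L_n^f Y$ is acted on invertibly by the $v_n$ self-map $v$ of $F(n)$ (because the cofiber $F(n)/v$ has type $n+1$ and $L_n^f Y$ is $F(n+1)$-null), hence $\mathrm{Map}(F(n),L_n^f Y)\simeq T(n)\otimes L_n^f Y\simeq L_n^f(T(n)\otimes Y)=0$, so $L_n^f Y$ is $F(n)$-null, i.e.\ $L_{n-1}^f$-local, i.e.\ $M_n^f Y=0$. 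This two-line argument is cleaner and also supplies the inductive step in the proof of the very identity $\langle L_n^f\mathbb{S}\rangle=\bigvee_{m\leq n}\langle T(m)\rangle$ that you are taking as an input, so it is worth having it at hand rather than treating that identity as a black box.
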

\begin{proof}[Sketch of proof]
The $\infty$-category $\mathrm{Sp}_{T(n)}$ is the localization at the $v_n$-periodic equivalences (which equal $T(n)_*$-equivalences) by definition. The equivalence between $\mathrm{Sp}_{T(n)}$ and $M_n^f\mathrm{Sp}$ follows from the fact that all of the maps in the diagram
\[
\begin{tikzcd}
& X \ar{d} & \\
M_n^f X \ar{r} & L_n^f X \ar{r} & L_{T(n)} X
\end{tikzcd}
\]
are $v_n$-periodic equivalences. Finally, $\mathrm{Sp}_{v_n}$ is equivalent to $M_n^f\mathrm{Sp}$. Indeed, by construction $\mathrm{Sp}_{v_n}$ is the essential image of $\mathrm{Sp}_{\geq n}$ in $L_n^f\mathrm{Sp}$, which we identify with the full subcategory of $\mathrm{Sp}_{(p)}$ consisting of $L_n^f$-local spectra. This essential image is contained in $M_n^f\mathrm{Sp}$, because for $V \in \mathrm{Sp}_{\geq n}$ the spectrum $L_{n-1}^f V$ is null. It is also all of $M_n^f\mathrm{Sp}$, since the fiber of the map $X \rightarrow L_{n-1}^f X$ is a colimit of spectra of type $\geq n$.
\end{proof}

\begin{remark}
The spectra $M_n^f X$ are often called the \emph{monochromatic} or \emph{monocular} layers of $X$ (the latter term is used by Bousfield \cite{bousfieldtelescopic}, who attributes it to Ravenel). The equivalence between $M_n^f\mathrm{Sp}$ and $\mathrm{Sp}_{T(n)}$ is analogous to the equivalence between $p$-primary torsion and (derived) $p$-complete objects of the derived category $D(\mathbb{Z})$.
\end{remark}

% references stable story: DHS, HS, Ravenel
% unstable: Bousfield, Dror-Farjoun, seminar notes

We now proceed to the unstable case. The theory of localizations of unstable homotopy theory was developed by Bousfield \cite{bousfieldlocalizationperiodicity,bousfieldlocalizations,bousfieldtelescopic} and Dror-Farjoun \cite{drorfarjoun}. More details on the material we discuss below can also be found in \cite{heuts} and \cite{thursday}.

A pointed space $V$ is of type $\geq n$ precisely if its suspension spectrum is. Similarly, a $v_n$ self-map of such a space is a self-map $v$ such that $\Sigma^\infty v$ is a $v_n$ self-map of the suspension spectrum $\Sigma^\infty V$. Like before, one now defines the $v$-periodic homotopy groups of another pointed space $X$ by inverting the action of $v$ on the homotopy groups of the mapping space $\mathrm{Map}_*(V,X)$. A $v_n$-periodic equivalence of spaces is a map which induces isomorphisms on these periodic homotopy groups. Again we would like to formally invert the $v_n$-periodic equivalences of pointed spaces. One can deduce from results of Bousfield that this is possible:

\begin{theorem}[See \cite{bousfieldtelescopic} and \cite{heuts}]
\label{thm:Mnf}
The localization of $\mathcal{S}_*$ at the $v_n$-periodic equivalences (in the sense of Definition \ref{def:localization}) exists; we denote it by $M\colon \mathcal{S}_* \rightarrow \mathcal{S}_{v_n}$. It has the following properties:
\begin{itemize} 
\item[(1)] The functor $M$ preserves finite limits and filtered colimits.
\item[(2)] The $\infty$-category $\mathcal{S}_{v_n}$ is compactly generated. If $V$ is any pointed finite type space of type $n$, then $M(\Sigma V)$ is a compact object and generates $\mathcal{S}_{v_n}$ under colimits.
\item[(3)] The stabilization of $\mathcal{S}_{v_n}$ is equivalent to $\mathrm{Sp}_{v_n}$.
\end{itemize}
\end{theorem}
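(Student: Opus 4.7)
The plan is to invoke Bousfield–Dror-Farjoun localization machinery to build $M$, and then unpack properties (1)--(3) from explicit descriptions of $v_n$-local objects in terms of a single finite pointed space $V$ of type $n$ equipped with a $v_n$ self-map $v\colon\Sigma^d V\to V$. First I would observe that, by the thick subcategory theorem and the asymptotic uniqueness part of Theorem \ref{thm:periodicity}, a map $f\colon X\to Y$ of pointed spaces is a $v_n$-periodic equivalence if and only if it becomes one when tested with a single fixed pair $(V,v)$; explicitly, iff the induced map $v^{-1}\pi_*\mathrm{Map}_*(V,X)\to v^{-1}\pi_*\mathrm{Map}_*(V,Y)$ is an isomorphism. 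Equivalently, $X$ is $v_n$-local iff precomposition $v^*\colon\mathrm{Map}_*(V,X)\to\mathrm{Map}_*(\Sigma^dV,X)$ is an equivalence. Since this is a $\mathrm{Map}_*$-condition against a single morphism of compact pointed spaces, existence of the reflective localization $M\colon\mathcal{S}_*\to\mathcal{S}_{v_n}$ follows from Lurie's accessible reflection theorem (or Bousfield's small-object argument).

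For property (1), I would check both claims directly on local objects. Finite limits: because $V$ is a finite pointed space, $\mathrm{Map}_*(V,-)$ and $\mathrm{Map}_*(\Sigma^d V,-)$ preserve finite limits, so the condition ``$v^*$ is an equivalence'' is stable under finite limits of spaces. Filtered colimits: compactness of $V$ and $\Sigma^d V$ in $\mathcal{S}_*$ implies these same functors commute with filtered colimits, and hence a filtered colimit of $v_n$-local spaces is $v_n$-local. Thus the right adjoint $\iota\colon\mathcal{S}_{v_n}\hookrightarrow\mathcal{S}_*$ preserves both finite limits and filtered colimits, which forces $M$ to preserve filtered colimits (as a left adjoint whose right adjoint is continuous and filtered-cocontinuous), and left-exactness of $M$ follows from the closure of local objects under finite limits via the standard reflective-localization argument.

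For property (2), compactness of $M(\Sigma V)$ is a formal consequence of the preceding: $\Sigma V$ is compact in $\mathcal{S}_*$, and $\iota$ commutes with filtered colimits, so for any filtered diagram $\{Z_i\}$ in $\mathcal{S}_{v_n}$,
\begin{equation*}
\mathrm{Map}_{\mathcal{S}_{v_n}}(M(\Sigma V),\mathrm{colim}\,Z_i)\simeq\mathrm{Map}_{\mathcal{S}_*}(\Sigma V,\mathrm{colim}\,\iota Z_i)\simeq\mathrm{colim}\,\mathrm{Map}(\Sigma V,\iota Z_i).
\end{equation*}
For generation, one observes that a local space $Z$ with $\mathrm{Map}_*(\Sigma V,\iota Z)\simeq *$ must satisfy $\pi_*(Z;V)=0$ and in particular $v^{-1}\pi_*(Z;V)=0$; by the thick subcategory theorem this vanishing persists against every type-$n$ test space, so $Z$ is terminal. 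Since $\mathcal{S}_{v_n}$ is presentable, property (1), and the existence of a single compact generator gives compact generation in the usual sense.

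For property (3), the functor $M$ is left-exact, so it induces a functor on stabilizations $\mathrm{Sp}(M)\colon\mathrm{Sp}=\mathrm{Sp}(\mathcal{S}_*)\to\mathrm{Sp}(\mathcal{S}_{v_n})$ which is exact and colimit-preserving. Because $\Sigma^\infty$ sends $v_n$-periodic equivalences of spaces to $v_n$-periodic equivalences of spectra, $\mathrm{Sp}(M)$ sends $T(n)_*$-equivalences to equivalences, so it factors through $\mathrm{Sp}_{v_n}$, yielding a colimit-preserving exact comparison functor $\mathrm{Sp}_{v_n}\to\mathrm{Sp}(\mathcal{S}_{v_n})$. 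To check this is an equivalence I would verify it on a compact generator (namely a suitable $T(n)$-localized $\Sigma^\infty V$), using that the suspension spectrum of the compact generator $M(\Sigma V)$ of $\mathcal{S}_{v_n}$ is identified, via (2) and the comparison of telescopes, with the $T(n)$-local spectrum $\Sigma^\infty \Sigma V$, and computing mapping spectra on both sides in terms of $v$-periodic homotopy groups. The main obstacle is this last step: one must argue that the ``linearization'' of mapping spaces in $\mathcal{S}_{v_n}$ truly recovers mapping spectra in $\mathrm{Sp}_{v_n}$, which is subtle unstably and is where the framework of the Bousfield--Kuhn functor is essential.
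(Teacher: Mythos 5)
Your construction of $M$ as a reflective localization onto the class of pointed spaces $X$ for which $v^*\colon \mathrm{Map}_*(V,X) \to \mathrm{Map}_*(\Sigma^d V,X)$ is an equivalence does not work, and this is the central gap. The paper explicitly warns immediately after the statement of Theorem \ref{thm:Mnf} that the localization $M$ is \emph{not} reflective and does not arise from a left Bousfield localization; rather, it must be built as the composite of a left and then a right localization. One can see the problem concretely: any pointed space $X$ with $\mathrm{Map}_*(V,X)\simeq *$ --- for instance a sufficiently truncated space, since $V$ is highly connected --- satisfies your locality condition vacuously, yet $X\to *$ is a $v_n$-periodic equivalence. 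A reflective localization onto your proposed local subcategory would fix such an $X$ rather than collapse it, and hence would not invert all $v_n$-periodic equivalences. Put differently, your class of ``locals'' is strictly larger than the class of genuinely $W$-local objects (those $Z$ for which $\mathrm{Map}_*(f,Z)$ is an equivalence for every $v_n$-periodic equivalence $f$), and even the latter class is not the target of a reflective localization inverting exactly $W$.

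The paper's construction proceeds differently, via Bousfield's classification of nullifications (Theorem \ref{thm:Bousfieldclasses}): one first performs the genuine reflective nullification $L_n^f = P_{V_{n+1}}$ at a finite suspension $V_{n+1}$ of type $n+1$, then forms $M_n^f X$ as the fiber of $L_n^f X \to L_{n-1}^f X$, and finally passes to the $d_{n+1}$-connected cover $(M_n^f X)\langle d_{n+1}\rangle$. The full subcategory $\mathcal{M}_n^f \subset \mathcal{S}_*$ on objects of this form models $\mathcal{S}_{v_n}$: the maps $X \to L_n^f X \leftarrow M_n^f X$ are $v_n$-periodic equivalences, and a map in $\mathcal{M}_n^f$ is an equivalence precisely when it is a $v_n$-periodic equivalence. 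The second step (taking the fiber and the connected cover) is a coreflective, not reflective, operation, and it is exactly this step that your proposal misses. Consequently your derivations of properties (1)--(3), which all lean on the premise that $\mathcal{S}_{v_n}$ sits in $\mathcal{S}_*$ with a fully faithful right adjoint $\iota$ preserving filtered colimits, do not go through as written; they need to be replaced by the explicit analysis of $\mathcal{M}_n^f$ carried out in \cite{heuts}, building on \cite{bousfieldtelescopic}.
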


The reader should be warned that the localization $M$ is \emph{not} reflective; in particular, it does not arise directly from a left Bousfield localization on the level of model categories. Rather, one should think of it as the composition of a left and then a right localization, as will become apparent. Theorem \ref{thm:Mnf} is proved by explicitly constructing an $\infty$-category analogous to $M_n^f\mathrm{Sp}$ in the stable case. A crucial ingredient is Bousfield's classification of the localizations associated to `nullifying' a finite space. If $A$ is any space, one calls a space $X$ \emph{$P_A$-local} (or \emph{$A$-null}) if the evident map
\begin{equation*}
X \simeq \mathrm{Map}(*, X) \rightarrow \mathrm{Map}(A,X)
\end{equation*}
is a homotopy equivalence. 
% fact check the following discussion
Essentially, the space $A$ is contractible from the point of view of $X$. A general space $X$ admits a $P_A$-localization $X \rightarrow P_A X$, and the subcategory of $\mathcal{S}_*$ on pointed $P_A$-local spaces is precisely the localization of $\mathcal{S}_*$ with respect to the map $A \rightarrow *$. We write $\langle A \rangle$ for the collection of spaces $Y$ for which $P_A Y \simeq *$ and call it the \emph{Bousfield class} of $A$. These are partially ordered; we write $\langle A \rangle \leq \langle B \rangle$ if every $Y$ for which $P_A Y \simeq *$ also satisfies $P_B Y \simeq *$. One can regard the following as an unstable analog of the thick subcategory theorem. Its proof relies on the stable version.

\begin{theorem}[Bousfield {\cite[Theorem 9.15]{bousfieldlocalizationperiodicity}}]
\label{thm:Bousfieldclasses}
% cross-check with Mike's seminar notes, who lists weaker assumptions
Let $W$ and $W'$ be $p$-local finite pointed spaces that are also suspensions. Then the following are equivalent:
\begin{itemize}
\item[(1)] $\langle W \rangle \leq \langle W' \rangle$,
\item[(2)] $\mathrm{type}(W) \geq \mathrm{type}(W')$ and $\mathrm{conn}(W) \geq \mathrm{conn}(W')$, with $\mathrm{conn}(W)$ denoting the minimal $i$ with $\pi_i W \neq 0$. 
\end{itemize}
\end{theorem}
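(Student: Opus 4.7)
The plan is to reduce the unstable comparison of Bousfield classes to the stable thick subcategory theorem, using the connectivity hypothesis as the bridge between stable and unstable constructions. The starting observation is that for a suspension $W = \Sigma V$ one has $\mathrm{Map}_*(W, Y) \simeq \Omega\mathrm{Map}_*(V, Y)$, so being $W$-null is controlled by stable-looking data attached to the unstable space $Y$, which is what allows the stable machinery to apply at all.

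For the implication $(2) \Rightarrow (1)$, I would argue that one can build $W'$ from $W$ using unstable cofiber sequences and retracts, with all intermediate operations respecting nullification. By the stable thick subcategory theorem, $\mathrm{type}(W) \geq \mathrm{type}(W')$ implies that $\Sigma^\infty W'$ lies in the thick subcategory generated by $\Sigma^\infty W$ inside $\mathrm{Sp}^{\mathrm{fin}}_{(p)}$. Concretely, $\Sigma^\infty W'$ may be assembled from copies of $\Sigma^\infty W$ by iterating cofibers, desuspensions, retracts, and wedges. The role of the connectivity hypothesis is to guarantee that one can realize each such step at the level of pointed spaces rather than spectra: by Freudenthal/Blakers--Massey, stable cofiber sequences of sufficiently highly connected spaces lift to unstable cofiber sequences in the relevant range, and the upper bound $\mathrm{conn}(W) \geq \mathrm{conn}(W')$ ensures we never have to desuspend past the point where this breaks. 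Nullification is well-behaved under cofibers, retracts, wedges, and (de)suspension of spaces, so $P_W Y \simeq *$ propagates to $P_{W'} Y \simeq *$.

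For the implication $(1) \Rightarrow (2)$, I would produce test spaces witnessing the failure of either inequality. If $\mathrm{conn}(W) < \mathrm{conn}(W')$, take an Eilenberg--MacLane space $K(\mathbb{Z}/p, m)$ with $\mathrm{conn}(W) < m \leq \mathrm{conn}(W')$; such a space is $W$-null for connectivity reasons (any map from the more-connected $W$ into $K(\mathbb{Z}/p,m)$ is null) but visibly not $W'$-null, since $W'$ has a nonzero $\pi_m$. If $\mathrm{type}(W) < \mathrm{type}(W') = n$, then by Mitchell's theorem one can choose a finite type $n$ complex $V$ and take $Y$ to be a suitably high suspension of $V$ or of a mapping telescope built from $V$ using a $v_n$ self-map provided by Theorem \ref{thm:periodicity}. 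Such $Y$ has $K(n)$-homology (so $\Sigma^\infty Y$ is detected by $\Sigma^\infty W$ stably but not by $\Sigma^\infty W'$), and one extracts an unstable nullification statement distinguishing the two Bousfield classes.

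The main obstacle is the $(2) \Rightarrow (1)$ direction, specifically the bookkeeping that transfers the stable thick-subcategory construction to the unstable world. The thick subcategory theorem is a purely stable statement and produces a construction of $\Sigma^\infty W'$ from $\Sigma^\infty W$ using operations (most dangerously, desuspensions) that need not exist for spaces; the connectivity hypothesis is precisely the quantitative input that keeps every intermediate space in the stable range. Organizing an inductive argument that respects both the cellular structure produced by the stable classification and the connectivity bounds at each stage is the technical heart of the proof.
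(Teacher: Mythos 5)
First, a structural note: the paper does not actually prove this theorem. It is an external result cited from Bousfield's \emph{Localization and periodicity in unstable homotopy theory}, and the only hint given is the one-line remark that ``Its proof relies on the stable version'' of the thick subcategory theorem. So there is no proof in the paper to compare against; what follows is an assessment of your attempt on its own merits.

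The approach you outline --- reduce to the stable thick subcategory theorem for $(2) \Rightarrow (1)$, and use explicit test spaces for $(1) \Rightarrow (2)$ --- is indeed the right general strategy, and it is the strategy the paper alludes to. However, the directions are systematically reversed throughout, and this is not a cosmetic slip. With the paper's convention, $\langle W \rangle \leq \langle W' \rangle$ means the class of $W$-acyclics (spaces killed by $P_W$) is \emph{contained in} the class of $W'$-acyclics; by the standard Galois connection for nullification this is equivalent to $P_{W'} W \simeq *$, i.e. $W$ lies in the closed class generated by $W'$, which in turn is equivalent to the reverse containment of null classes: every $W'$-null space is $W$-null. In $(2) \Rightarrow (1)$ you write that ``one can build $W'$ from $W$'' and that ``$\Sigma^\infty W'$ lies in the thick subcategory generated by $\Sigma^\infty W$''. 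Both are the wrong way around: with $\mathrm{type}(W) \geq \mathrm{type}(W')$ it is $\Sigma^\infty W$ that lives in the thick subcategory generated by $\Sigma^\infty W'$, and the goal is to witness $W$ as built from $W'$. In $(1) \Rightarrow (2)$ (by contrapositive) you need to exhibit a space that is $W'$-null but \emph{not} $W$-null; you instead construct a space claimed to be $W$-null but not $W'$-null, which would falsify the opposite inequality. Moreover, even after swapping the roles, the claim is off: for $m$ strictly between $\mathrm{conn}(W)$ and $\mathrm{conn}(W')$ one does not have $\pi_m W' \neq 0$, and whether $K(\mathbb{Z}/p,m)$ is $V$-null is governed by the reduced cohomology of $V$, not by its homotopy. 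The argument that works is to take $m = \mathrm{conn}(W)$ and coefficients in the bottom homology of $W$; then the Eilenberg--MacLane space is $W'$-null (all relevant cohomology of $W'$ vanishes because $\mathrm{conn}(W') > m$) but not $W$-null ($H^m(W)$ is nonzero).

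Beyond the direction errors, the type half of $(1) \Rightarrow (2)$ is too vague to assess --- ``one extracts an unstable nullification statement distinguishing the two Bousfield classes'' is precisely the nontrivial step and is left unaddressed. And in $(2) \Rightarrow (1)$, the claim that the thick subcategory theorem supplies an explicit stable construction by cofibers, retracts, desuspensions and wedges that then lifts to spaces via Blakers--Massey in the range dictated by $\mathrm{conn}(W) \geq \mathrm{conn}(W')$ is a genuine gap, not a matter of bookkeeping. The thick subcategory theorem is an abstract consequence of the nilpotence theorem and does not hand you a finite desuspension-free recipe; Bousfield's actual argument is not a direct cell-by-cell lift of a stable construction, and a correct treatment requires more than connectivity control. (A natural first reduction you could make, which does work, is to replace $W'$ by $\Sigma^k W'$ for $k = \mathrm{conn}(W) - \mathrm{conn}(W')$, since $\langle \Sigma^k W' \rangle \leq \langle W' \rangle$; that at least aligns connectivities before invoking the stable input.) In summary: the high-level plan is the right one, but the inequality $\langle W \rangle \leq \langle W' \rangle$ has been unwound in the wrong direction at every step, and the two hardest points --- the unstable lift in $(2) \Rightarrow (1)$ and the type-detecting test space in $(1) \Rightarrow (2)$ --- remain unsupplied.
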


Thus, up to keeping track of connectivity, finite suspension spaces are still classified by their type, as was the case with finite spectra. Now pick a finite suspension $V_{n+1}$ of type $n+1$. We will denote $\mathrm{conn}(V_{n+1})$ by $d_{n+1}$ and the localization functor $P_{V_{n+1}}$ by $L_n^f$. The latter is of course slightly abusive, since $L_n^f$ depends not only on $n$, but also on the connectivity $d_{n+1}$ (but no more, according to Theorem \ref{thm:Bousfieldclasses}). More importantly, Theorem \ref{thm:Bousfieldclasses} immediately implies that for any pointed space $X$, the $v_i$-periodic homotopy groups of $L_n^f X$ vanish for $i > n$. Bousfield also proves that any pointed space $X$, the map $X \rightarrow L_n^f X$ is a $v_i$-periodic equivalence for $i \leq n$ (see \cite{bousfieldtelescopic}). In the stable case this implication can be reversed. Unstably one has to take a little care, because $L_n^f$ does not affect the homotopy groups of $X$ in dimensions below $d_{n+1}$. However, the following is true:

\begin{proposition}[Bousfield \cite{bousfieldtelescopic}]
A map $\varphi\colon X \rightarrow Y$ of $d_{n+1}$-connected pointed spaces is a $v_i$-periodic equivalence for every $0 \leq i \leq n$ if and only if $L_n^f(\varphi)$ is an equivalence of pointed spaces.
\end{proposition}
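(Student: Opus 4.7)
The plan is to prove both implications separately. The reverse implication is formal, while the forward one requires a genuine reduction through the chromatic filtration.

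For the \emph{if} direction, I would fit $\varphi$ into the naturality square
\[
\begin{tikzcd}
X \ar{r}{\varphi} \ar{d} & Y \ar{d} \\
L_n^f X \ar{r}{L_n^f(\varphi)} & L_n^f Y
\end{tikzcd}
\]
and invoke the preceding Bousfield result: the two vertical maps are $v_i$-periodic equivalences for every $0 \leq i \leq n$. Since an actual homotopy equivalence is trivially a $v_i$-periodic equivalence, the 2-out-of-3 property of $v_i$-periodic equivalences forces the top map to be one as well.

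For the \emph{only if} direction, assume $\varphi$ is a $v_i$-periodic equivalence for each $0 \leq i \leq n$. The same square then shows $L_n^f(\varphi)$ shares that property. Let $F$ denote the homotopy fiber of $L_n^f(\varphi)$. I would collect three observations: (a) the subcategory of $L_n^f$-local spaces is closed under homotopy limits (as $L_n^f$ is a reflective localization), so $F$ is $L_n^f$-local, and then Theorem \ref{thm:Bousfieldclasses} forces the $v_j$-periodic homotopy groups of $F$ to vanish for all $j > n$; (b) the long exact sequence in $v_i$-periodic homotopy applied to $F \to L_n^f X \to L_n^f Y$ makes the $v_i$-periodic homotopy groups of $F$ vanish for $0 \leq i \leq n$; (c) because $L_n^f$ does not modify homotopy groups in dimensions below $d_{n+1}$, the spaces $L_n^f X$ and $L_n^f Y$ remain $d_{n+1}$-connected, so $F$ is at least $(d_{n+1}-1)$-connected. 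Combining (a) and (b), every $v_i$-periodic homotopy group of $F$ vanishes, and the task becomes: show that a sufficiently connected $L_n^f$-local pointed space with vanishing $v_i$-periodic homotopy for all $i \geq 0$ is contractible.

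I would establish the last assertion by induction on $n$, using an unstable chromatic fiber square comparing $L_n^f F$ to $L_{n-1}^f F$ through a monochromatic layer $M_n^f F$. The base case $n=0$ reduces, after identifying $L_0^f$ with rationalization on simply-connected spaces, to the elementary fact that a simply-connected rational space with vanishing rational homotopy groups is contractible (for instance via the rational Hurewicz theorem, or via Quillen's Theorem \ref{thm:rationalhomotopy}). For the inductive step, $L_{n-1}^f F$ inherits all the vanishing hypotheses at chromatic heights $0 \leq i \leq n-1$ and is covered by the induction hypothesis, while $M_n^f F$ should be controlled by the $v_n$-periodic homotopy of $F$, which vanishes by assumption.

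The main obstacle is precisely this inductive step: one needs the unstable chromatic fracture behaviour that recovers an $L_n^f$-local space from $L_{n-1}^f F$ together with a monochromatic piece detected by $v_n$-periodic homotopy. This is the technical heart of Bousfield's work on telescopic localizations, and the $d_{n+1}$-connectivity hypothesis is essential to rule out the low-dimensional phenomena where $L_n^f$ acts trivially and cannot see periodic information.
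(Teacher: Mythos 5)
The paper does not actually prove this proposition; it is attributed to Bousfield (\cite{bousfieldtelescopic}) and cited without an argument. I can therefore only evaluate your proposal on its own merits.

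Your \emph{if} direction is correct and complete: it is a formal two-out-of-three argument and does not even need the connectivity hypothesis. Your reduction of the \emph{only if} direction to showing that a sufficiently connected $L_n^f$-local space with vanishing $v_i$-periodic homotopy for all $i$ is contractible is also sound (modulo one small correction: the fiber $F$ of $L_n^f(\varphi)$ is in general only $(d_{n+1}-1)$-connected, not $d_{n+1}$-connected, so the inductive statement must be phrased with enough slack, e.g.\ by arranging $d_{n+1} > d_n$ as the paper in fact assumes when constructing $M_n^f$). The base case $n=0$ is genuinely elementary.

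The gap is the inductive step, and it is not merely a matter of filling in details. You need the assertion that $M_n^f F$ is controlled by the $v_n$-periodic homotopy of $F$ — concretely, that a suitably connected space in the image of $M_n^f$ with vanishing $v_n$-periodic homotopy is contractible. But this is not a simpler sub-problem that the chromatic fracture square reduces you to; it is a statement of exactly the same depth and flavour as the proposition itself, just concentrated at a single chromatic height. (It is the assertion appearing a few lines later in the paper, ``a map between spaces in $\mathcal{M}_n^f$ is an equivalence if and only if it is a $v_n$-periodic equivalence,'' again cited from Bousfield.) As written, your argument is therefore circular in spirit: the induction on $n$ does not strictly decrease the difficulty, because at each stage you must still explain why $v_n$-periodic homotopy detects the monochromatic layer. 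Bousfield's actual proof breaks this circle by an explicit cellular analysis (building $M_n^f X$ out of iterated smash products and suspensions of a type-$n$ complex $V_n$, and showing that the resulting tower is controlled by $\mathrm{Map}_*(V_n, -)$); some such external input is unavoidable, and your outline needs to identify and invoke it rather than defer it.
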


We define $M_n^f$ in analogy with the stable case as the fiber of the natural transformation $L_n^f \rightarrow L_{n-1}^f$; the latter exists by Theorem \ref{thm:Bousfieldclasses} as long as we have arranged our choices so that $\mathrm{conn}(V_n) \leq \mathrm{conn}(V_{n+1})$, which we will always assume to be the case. For $X$ a pointed space, there are maps
\begin{equation*}
X \rightarrow L_n^f X \leftarrow M_n^f X
\end{equation*}
that are both $v_n$-periodic equivalences. Moreover, for $i \neq n$ the $v_i$-periodic homotopy groups of $M_n^f X$ vanish. Write $\mathcal{M}_n^f$ for the full subcategory of $\mathcal{S}_*$ on spaces of the form $(M_n^f X)\langle d_{n+1}\rangle$, where the brackets indicate the $d_{n+1}$-connected cover. The functor
\begin{equation*}
X \mapsto (M_n^f X) \langle d_{n+1} \rangle
\end{equation*}
is naturally related to the identity by a zig-zag of $v_n$-periodic equivalences; moreover, a map between spaces in $\mathcal{M}_n^f$ is an equivalence if and only if it is a $v_n$-periodic equivalence. From this one deduces the first statement of Theorem \ref{thm:Mnf}. For the remainder of the proof we refer the reader to \cite{heuts}.

% conservativity of Bousfield--Kuhn functors on finite spaces

\index{Bousfield--Kuhn functor}
We conclude this section with a review of the Bousfield--Kuhn functor and its adjoint. We refer to \cite{bousfieldtelescopic} and \cite{kuhntelescopic} for a much more thorough discussion. Suppose $V$ is a finite pointed space of type $n$ with a $v_n$ self-map $v\colon \Sigma^d V \rightarrow V$. Then for $X$ a pointed space, one can define a (pre)spectrum $\Phi_v X$ with constituent spaces
\begin{equation*}
(\Phi_v X)_0 = \mathrm{Map}_*(V,X), \, (\Phi_v X)_d = \mathrm{Map}_*(V,X), \, \ldots, (\Phi_v X)_{kd} = \mathrm{Map}_*(V,X), \, \ldots
\end{equation*}
and structure maps
\begin{equation*}
(\Phi_v X)_{kd} = \mathrm{Map}_*(V,X) \xrightarrow{v^*} \mathrm{Map}_*(\Sigma^d V, X) \cong \Omega^d(\Phi_v X).
\end{equation*}
This defines the \emph{telescopic functor}
\begin{equation*}
\Phi_v: \mathcal{S}_* \rightarrow \mathrm{Sp}
\end{equation*}
associated to $v$. By construction it satisfies $\pi_*\Phi_v X \cong v^{-1}\pi_*(X;V)$. In fact this functor takes values in $T(n)$-local spectra, so that one may replace the codomain by $\mathrm{Sp}_{T(n)}$ or, equivalently, $\mathrm{Sp}_{v_n}$. As a consequence of Theorem \ref{thm:periodicity} the telescopic functor $\Phi_v$ does not really depend on the choice of $v$, but only on $V$. We will write $\Phi_V$ instead of $\Phi_v$. In fact, the dependence on $V$ can be made (contravariantly) functorial using Theorem \ref{thm:periodicity}. This can be used to conveniently package the various telescopic functors into one. The \emph{Bousfield--Kuhn functor} is a functor
\begin{equation*}
\Phi_n: \mathcal{S}_* \rightarrow \mathrm{Sp}_{v_n}
\end{equation*}
satisfying the following properties (see \cite{kuhntelescopic}):
\begin{itemize}
\item[(1)] There are equivalences
\begin{equation*}
V^\vee \otimes \Phi_n X \simeq \Phi_V X,
\end{equation*}
natural in $X$ and $V$.
\item[(2)] The composition $\Phi_n\Omega^\infty$ is naturally equivalent to the localization functor $L_{T(n)}$. In particular, the $T(n)$-localization of any spectrum $E$ depends only on its zeroth space $\Omega^\infty E$, regardless of its structure as an infinite loop space.
\end{itemize}
The functor $\Phi_n$ is even characterized up to equivalence by property (1). The construction of $\Phi_n$ and the proof of its basic properties rely on the following trick of Kuhn:

\begin{lemma}[Kuhn \cite{kuhnmorava}]
\label{lem:kuhn}
For any $n \geq 0$ there exists a directed system 
\begin{equation*}
F(1) \rightarrow F(2) \rightarrow F(3) \rightarrow \cdots
\end{equation*}
of finite spectra of type $n$ equipped with a map
\begin{equation*}
\varinjlim_k F(k) \rightarrow \mathbb{S}
\end{equation*}
which is a $T(m)_*$-equivalence for every $m \geq n$.
\end{lemma}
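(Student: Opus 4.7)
The plan is to realize $\varinjlim_k F(k)$ as the fiber of the finite localization $L_{n-1}^f$ applied to $\mathbb{S}$, and then to extract a sequential cofinal subsystem from the resulting filtered colimit presentation.

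First, I would form the cofiber sequence
\[
C_n^f \mathbb{S} \longrightarrow \mathbb{S} \longrightarrow L_{n-1}^f \mathbb{S},
\]
where $C_n^f$ denotes the fiber of the localization transformation. Because $L_{n-1}^f$ is a smashing (finite) localization on $\mathrm{Sp}_{(p)}$, generated by inverting maps whose cofiber is a single finite type $\geq n$ spectrum, the standard theory of finite localizations identifies $C_n^f\mathbb{S}$ canonically as the filtered colimit
\[
C_n^f \mathbb{S} \simeq \varinjlim_{W \in (\mathrm{Sp}^{\mathrm{fin}}_{\geq n})_{/\mathbb{S}}} W
\]
over the category of finite $p$-local spectra of type $\geq n$ equipped with a map to $\mathbb{S}$.

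Second, I would verify that $C_n^f\mathbb{S} \to \mathbb{S}$ is a $T(m)_*$-equivalence for every $m \geq n$. By the fiber sequence, this amounts to showing $T(m)_*(L_{n-1}^f\mathbb{S}) = 0$. Picking a finite type $m$ spectrum $V_m$ with a $v_m$ self-map $v$ so that $T(m) \simeq v^{-1}V_m^{\vee}$, and using that $L_{n-1}^f$ is smashing, one has
\[
T(m) \otimes L_{n-1}^f \mathbb{S} \simeq v^{-1}\bigl(L_{n-1}^f V_m^{\vee}\bigr) \simeq 0,
\]
the last equivalence because $V_m^{\vee}$ is finite of type $m \geq n$ and hence killed by $L_{n-1}^f$.

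Third, I would pass to a sequential subsystem. Since $p$-local finite spectra have countably many equivalence classes, the indexing category $(\mathrm{Sp}^{\mathrm{fin}}_{\geq n})_{/\mathbb{S}}$ is essentially countable and filtered, and any such category admits a cofinal functor from $(\mathbb{N},\leq)$: enumerate the objects and inductively pick upper bounds. This produces the desired directed system $F(1) \to F(2) \to \cdots$ together with the map on the colimit.

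The step I expect to be delicate is arranging each $F(k)$ to be of type exactly $n$ rather than merely type $\geq n$. If the intended reading of the lemma is "type $\geq n$" the construction above suffices as stated; otherwise one needs an additional refinement, for instance by systematically replacing a type $>n$ member $W$ by a finite type $n$ spectrum mapped into a cofiber presentation of $W$ via a $v_n$-self-map (possible by the periodicity theorem), while preserving the map to $\mathbb{S}$. The technical care is in ensuring this refinement does not destroy the $T(m)_*$-equivalence for $m>n$, which is guaranteed as long as the replacements sit in the same overcategory $(\mathrm{Sp}^{\mathrm{fin}}_{\geq n})_{/\mathbb{S}}$ whose colimit has already been identified with $C_n^f\mathbb{S}$.
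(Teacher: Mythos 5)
Your proof is correct in outline but takes a genuinely different route from the paper. The paper's argument is an explicit induction on $n$: starting from a finite type $n-1$ spectrum $V$ with a $v_{n-1}$ self-map $v$, it considers the cofibers $\Sigma^{-1}V/v^k$ of the powers $v^k$, which are finite of type exactly $n$, form a directed system mapping to $V$, and whose colimit differs from $V$ by the telescope $v^{-1}V$, which is $T(m)$-acyclic for $m\ge n$. (Base case $n=1$: $\varinjlim_k \mathbb{S}^{-1}/p^k \to \mathbb{S}$.) This is close to Kuhn's original construction and produces the type $n$ condition automatically. Your argument instead goes through the abstract theory of smashing finite localizations: you realize the desired colimit as the acyclification $C_n^f\mathbb{S}$ of $\mathbb{S}$ with respect to $L_{n-1}^f$, use that $C_n^f\mathbb{S}$ is the filtered colimit of finite type $\ge n$ spectra over $\mathbb{S}$, and verify $T(m)$-acyclicity of $L_{n-1}^f\mathbb{S}$ by the smashing property. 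That is a clean structural packaging of the same phenomenon: the paper's proof is essentially constructing this cellular approximation by hand, one $v$-self-map cofiber at a time, whereas you appeal to its universal characterization. The trade-off is exactly the one you flag: the abstract argument naturally yields type $\ge n$ objects rather than type exactly $n$, and repairing this requires an extra argument. The cleanest fix (and one that works) is to observe that the full subcategory of $(\mathrm{Sp}^{\mathrm{fin}}_{\ge n})_{/\mathbb{S}}$ consisting of maps $W\to\mathbb{S}$ with $W$ of type exactly $n$ is cofinal: given any $W\to\mathbb{S}$, the object $W\vee V$ with $V$ a fixed finite type $n$ spectrum and the zero map $V\to\mathbb{S}$ receives the canonical inclusion from $W$ over $\mathbb{S}$, is of type exactly $n$, and the inclusion is cofinal because the wedge summand $V$ can be coned off again further along in the diagram. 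Your proposed refinement via $v_n$-self-map cofiber presentations of a type $>n$ object is more elaborate than necessary but would also work. In contrast, the paper's inductive approach sidesteps this entirely, at the cost of having to track the $v$-self-maps and cofiber sequences explicitly through the induction.
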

\begin{proof}[Sketch of proof]
In case $n=1$ this is familiar from algebra: the colimit $\varinjlim_k \mathbb{S}^{-1}/p^k$ is equivalent to the sphere spectrum after $p$-completion, in the same way that the derived $p$-completion of $\mathbb{Z}/p^\infty[-1]$ is $\mathbb{Z}_p$ in the $\infty$-category $D(\mathbb{Z})$. The general proof goes by induction on $n$: given a finite spectrum $V$ of type $n-1$, one picks a $v_{n-1}$ self-map $v$. The cofibers $\Sigma^{-1}V/v^k$ of the maps
\begin{equation*}
\Sigma^{-1} V \xrightarrow{v^k} \Sigma^{-1-kd} V
\end{equation*}
form a directed system with a map to $V$. The map from the colimit $\Sigma^{-1}V/v^\infty$ to $V$ is a $T(m)_*$-equivalence for $m \geq n$ because the telescope $v^{-1} V$ is $T(m)$-acyclic.
\end{proof}

To construct $\Phi_n$ one picks a system as in Lemma \ref{lem:kuhn} and defines $\Phi_n := \varprojlim_k \Phi_{F(k)}$. Property (1) is now fairly easily deduced from the identification
\begin{equation*}
V^{\vee} \otimes \Phi_{F(k)} \simeq F(k)^\vee \otimes \Phi_V.
\end{equation*}
Note also that Lemma \ref{lem:kuhn} implies that $\Phi_n$ is completely determined by the functors $F(k)^\vee \otimes \Phi_n$, which shows that (1) indeed characterizes $\Phi_n$ up to equivalence. This characterization also shows that the choice of $F(k)$ is inessential to the definition of $\Phi_n$. Property (2), striking as it may be, is quite easily proved as well. Again it suffices to check it for $\Phi_V$. We should argue that for any spectrum $E$ there is a natural equivalence $\Phi_V\Omega^\infty(E) \simeq L_{T(n)} V^{\vee} \otimes E$. Since $\Phi_V\Omega^\infty$ preserves $v_n$-periodic equivalences, we may without loss of generality assume that $E$ is already $T(n)$-local. But then the maps
\begin{equation*}
\mathrm{Map}_*(V, \Omega^\infty E) \xrightarrow{v^*} \Omega^d \mathrm{Map}_*(V, \Omega^\infty E)
\end{equation*} 
are equivalences and the formula we wrote down for $\Phi_V\Omega^\infty(E)$ is already an $\Omega$-spectrum. Even better, it is clearly the $\Omega$-spectrum $V^{\vee} \otimes E$.

Since $\Phi_n$ sends $v_n$-periodic equivalences to equivalences (essentially by construction), it factors through the localization $\mathcal{S}_{v_n}$. We still denote the resulting functor by
\begin{equation*}
\Phi_n\colon \mathcal{S}_{v_n} \rightarrow \mathrm{Sp}_{v_n}.
\end{equation*}
The following will be crucial:

\begin{proposition}[Bousfield \cite{bousfieldtelescopic}]
The functor $\Phi_n$ admits a left adjoint $\Theta_n\colon \mathrm{Sp}_{v_n} \rightarrow \mathcal{S}_{v_n}$.
\end{proposition}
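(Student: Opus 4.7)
The plan is to apply the adjoint functor theorem for presentable $\infty$-categories (\cite{htt}, Corollary 5.5.2.9), which asserts that a functor between presentable $\infty$-categories admits a left adjoint if and only if it preserves small limits and is accessible. The proof then reduces to checking presentability of both sides and verifying that $\Phi_n$ preserves limits and is accessible.

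Presentability is straightforward: $\mathcal{S}_{v_n}$ is compactly generated by part (2) of Theorem \ref{thm:Mnf}, while $\mathrm{Sp}_{v_n}$ is a reflective localization of the presentable $\infty$-category $\mathrm{Sp}_{(p)}$, via its identification with $\mathrm{Sp}_{T(n)}$ in Proposition \ref{prop:MnfvsTn}.

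For preservation of limits, I would exploit the expression $\Phi_n \simeq \varprojlim_k \Phi_{F(k)}$ arising from Lemma \ref{lem:kuhn} and the construction preceding the proposition. It suffices to check that each telescopic functor $\Phi_{F(k)}$ preserves limits, which in turn follows from the fact that $\Phi_{F(k)}X$ is assembled from the iterated mapping spaces $\mathrm{Map}_*(F(k),X)$ via structure maps induced by a chosen $v_n$-self map: the functor $\mathrm{Map}_*(F(k),-)$ preserves limits, and the passage to the associated spectrum respects limits in the input. The countable inverse limit over $k$ then also preserves limits.

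The accessibility verification is where the main obstacle lies. Each $\Phi_{F(k)}$ preserves filtered colimits because $F(k)$ is a compact object, but the countable inverse limit defining $\Phi_n$ does not commute with arbitrary filtered colimits, and indeed $\Phi_n$ will typically fail to preserve all filtered colimits. The way out is the general fact that in any presentable $\infty$-category $\omega_1$-filtered colimits commute with countable limits. This implies that $\Phi_n$ at least preserves $\omega_1$-filtered colimits, which is sufficient for accessibility. Combining presentability, limit-preservation and accessibility, the adjoint functor theorem produces the desired left adjoint $\Theta_n\colon \mathrm{Sp}_{v_n} \to \mathcal{S}_{v_n}$.
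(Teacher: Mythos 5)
Your approach — invoking the presentable adjoint functor theorem — is genuinely different from the paper's. The paper instead constructs $\Theta_n$ directly on a generating family: for $E$ a finite type $n$ spectrum, it identifies the functor $X \mapsto \mathrm{Map}(E,\Phi_n X) \simeq \Omega^\infty \Phi_E X$ as corepresented by the image in $\mathcal{S}_{v_n}$ of a finite type $n$ space $E'$ with $\Sigma^\infty E' \simeq E$, and then extends over colimits. That argument produces $\Theta_n$ explicitly on generators, whereas yours gives abstract existence; both are legitimate strategies in principle.

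However, your verification that $\Phi_n$ preserves limits has a genuine gap. You assert that ``the passage to the associated spectrum respects limits in the input,'' but this is false in general: spectrification of a prespectrum is computed as a (filtered) colimit, and hence is a left adjoint that does not commute with limits. The step cannot be justified without the observation — which is exactly the pivot of the paper's proof — that for $X \in \mathcal{S}_{v_n}$ the structure maps $\mathrm{Map}_*(V,X) \to \Omega^d\mathrm{Map}_*(V,X)$ induced by a $v_n$ self-map are already equivalences, so the defining telescope of $\Phi_V(X)$ stabilizes (the prespectrum is an $\Omega$-prespectrum on $\mathcal{S}_{v_n}$). Only with this in hand does $\Phi_{F(k)}$ restricted to $\mathcal{S}_{v_n}$ become a limit-preserving functor. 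Since this is the same key fact the paper uses for corepresentability, once you invoke it you are not saving any work over the paper's more direct argument. A secondary, non-fatal issue: you claim $\Phi_n$ will typically fail to preserve filtered colimits, but in fact it does preserve them (as the paper uses elsewhere; one checks this after smashing with $F(k)^\vee$, reducing to $\Phi_{F(k)}$). Your $\omega_1$-filtered detour is therefore unnecessary, though it does not introduce an error.
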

\begin{proof}[Sketch of proof]
An explicit construction of $\Theta_n$ is described in \cite{bousfieldtelescopic} and also in \cite{kuhntelescopic}. To argue existence, roughly one can do the following. It suffices to show that for every spectrum $E$, the functor
\begin{equation*}
\mathcal{S}_{v_n} \rightarrow \mathcal{S}: X \mapsto \mathrm{Map}(E, \Phi_n X)
\end{equation*}
is corepresentable by some object $\Theta_n(E)$. Moreover, it suffices to consider a collection of objects $E$ which generate $\mathrm{Sp}_{v_n}$ under colimits, such as the finite spectra of type $n$. For such an $E$, the functor above takes the form $X \mapsto \Omega^\infty\Phi_E X$. Considering the definition of the telescopic functor $\Phi_E$, one sees that for $X \in \mathcal{S}_{v_n}$ the colimit defining this functor becomes eventually constant, showing that indeed our functor is corepresented by (the image in $\mathcal{S}_{v_n}$ of) a finite type $n$ space $E'$ for which $\Sigma^\infty E' \simeq E$ in $\mathrm{Sp}_{v_n}$.
\end{proof}

We now have two adjunctions relating the $\infty$-category $\mathcal{S}_{v_n}$ of $v_n$-periodic spaces to its stable counterpart $\mathrm{Sp}_{v_n}$, organized in the following diagram:
\[
\begin{tikzcd}
\mathrm{Sp}_{v_n} \ar[shift left]{r}{\Theta_n} & \mathcal{S}_{v_n} \ar[shift left]{r}{\Sigma^\infty_{v_n}} \ar[shift left]{l}{\Phi_n} & \mathrm{Sp}_{v_n}. \ar[shift left]{l}{\Omega^\infty_{v_n}}
\end{tikzcd}
\]
The adjunction on the right is the stabilization of $\mathcal{S}_{v_n}$, which we already mentioned in Theorem \ref{thm:Mnf}. Property (2) of the Bousfield--Kuhn functor in fact implies that the horizontal composites $\Phi_n\Omega^\infty_{v_n}$ and $\Sigma^\infty_{v_n}\Theta_n$ are both equivalent to the identity functor of $\mathrm{Sp}_{v_n}$. In this way the diagram above is very much analogous to the one at the end of Section \ref{sec:rationalhomotopy} and the one right above Theorem \ref{thm:basterramandell}.

\section{Lie algebras and $v_n$-periodic spaces}
\label{sec:Liealgebras}

\index{spectral Lie algebra}
The aim of this section is to outline a proof of Theorem \ref{thm:vnperiodicspaces}, relating the $\infty$-category $\mathcal{S}_{v_n}$ to the $\infty$-category of spectral Lie algebras. More details can be found in \cite{heuts}. We also discuss to what extent there is a `model' for $\mathcal{S}_{v_n}$ in terms of commutative coalgebras, Koszul dual to the Lie algebra model provided by Theorem \ref{thm:vnperiodicspaces}. This second model is closely related to recent work of Behrens--Rezk \cite{behrensrezk}, as we will explain.

The proof of Theorem \ref{thm:vnperiodicspaces} begins with the following, showing that the $\infty$-category $\mathcal{S}_{v_n}$ can indeed be expressed as some kind of algebras in $\mathrm{Sp}_{v_n}$. Throughout this section we will use the $\infty$-categories $\mathrm{Sp}_{v_n}$ and $\mathrm{Sp}_{T(n)}$ interchangeably.

\index{monadic adjunction}
\begin{theorem}[Eldred--Heuts--Mathew--Meier \cite{ehmm}]
\label{thm:ehmm}
The adjoint pair $(\Theta_n, \Phi_n)$ is monadic, i.e., the functor
\begin{equation*}
\varphi: \mathcal{S}_{v_n} \rightarrow \mathrm{Alg}_{\Phi_n\Theta_n}(\mathrm{Sp}_{v_n})
\end{equation*}
induced by $\Phi_n$ is an equivalence of $\infty$-categories.
\end{theorem}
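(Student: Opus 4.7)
The plan is to apply the Barr--Beck--Lurie monadicity theorem (Theorem~\ref{thm:barrbeck}), reformulated as in Lemma~\ref{lem:monadicitycriterion}: one must check that $\Phi_n$ is conservative on $\mathcal{S}_{v_n}$ and that for every $X \in \mathcal{S}_{v_n}$ the canonical augmentation
\[
|(\Theta_n\Phi_n)^{\bullet+1}(X)| \longrightarrow X
\]
is an equivalence in $\mathcal{S}_{v_n}$. The existence of the geometric realizations that appear here is guaranteed by Theorem~\ref{thm:Mnf}, which says $\mathcal{S}_{v_n}$ is compactly generated and therefore admits all colimits.

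Conservativity is essentially immediate from the construction. Property~(1) of the Bousfield--Kuhn functor gives $V^\vee \otimes \Phi_n(X) \simeq \Phi_V(X)$, and as $V$ ranges over finite type $n$ spaces the Spanier--Whitehead duals $V^\vee$ generate $\mathrm{Sp}_{v_n}$ under colimits; consequently $\Phi_n(f)$ is an equivalence in $\mathrm{Sp}_{v_n}$ if and only if each $\Phi_V(f)$ is, i.e., if and only if $f$ is a $v_n$-periodic equivalence. By Theorem~\ref{thm:Mnf}, these are precisely the equivalences in $\mathcal{S}_{v_n}$, so $\Phi_n$ detects equivalences there.

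The real work is the bar resolution convergence. A natural first case is $X = \Theta_n(E)$ for some $E \in \mathrm{Sp}_{v_n}$: such ``free'' objects carry a canonical extra degeneracy $\Theta_n(\eta_E)\colon \Theta_n E \to \Theta_n \Phi_n \Theta_n E$ on the simplicial object $(\Theta_n\Phi_n)^{\bullet+1}\Theta_n E$, coming from the unit $\eta\colon \mathrm{id} \to \Phi_n\Theta_n$, so the augmented simplicial object becomes split and its realization is automatically $\Theta_n E$. For general $X \in \mathcal{S}_{v_n}$, my plan is to approximate $X$ by its Goodwillie tower and reduce to the free case layer by layer. The homogeneous layers of the Goodwillie tower of the identity are, by the discussion in Section~\ref{sec:Lie}, built from expressions of the form $(\mathbf{L}(k) \otimes \Sigma^\infty X^{\otimes k})_{h\Sigma_k}$, which are stable in nature and thus lie (up to equivalences in $\mathcal{S}_{v_n}$) in the essential image of $\Theta_n$; on these pieces the previous paragraph applies.

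The main obstacle, as I anticipate it, is the convergence of the Goodwillie tower of the identity on $\mathcal{S}_{v_n}$. On general pointed spaces this tower need not converge, but the whole philosophical point of $v_n$-periodic homotopy theory is that the chromatic truncation forces the tower to converge. Making this precise requires substantial input from the Arone--Mahowald and Arone--Dwyer analysis of the partition complexes (carrying the derivatives of the identity, namely the spectral Lie operad $\mathbf{L}$) combined with the periodicity theorem (Theorem~\ref{thm:periodicity}) and the chromatic vanishing built into $\mathcal{S}_{v_n}$. Once tower convergence is established in a usable form, combining it with the free case above lets the bar resolution recover an arbitrary $X$, completing the verification of the monadicity criterion.
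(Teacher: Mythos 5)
Your verification of conservativity is essentially the right argument and matches the paper's (the paper just calls it "immediate from the construction of $\mathcal{S}_{v_n}$").

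The bar-resolution step, however, contains a genuine gap. Your strategy is to first handle the free case $X = \Theta_n E$ by the extra-degeneracy argument (fine), and then to reduce the general case to the free case via convergence of the Goodwillie tower of $\mathrm{id}_{\mathcal{S}_{v_n}}$. You flag this convergence as "the main obstacle" but assert that "chromatic truncation forces the tower to converge," citing Arone--Mahowald/Arone--Dwyer and the periodicity theorem as the source. This is false: Arone--Mahowald establish Goodwillie tower convergence for spheres, not for arbitrary objects of $\mathcal{S}_{v_n}$, and the paper itself (Section~\ref{sec:Liealgebras} and Section~\ref{sec:questions}) is explicit that the Goodwillie tower of the identity on $\mathcal{S}_{v_n}$ does \emph{not} converge in general. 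Concretely, double suspensions $W = \Sigma^2 V$ of type $n$ spaces with self-maps --- which correspond to \emph{free} spectral Lie algebras under $\Theta_n$ --- fail to be $\Phi_n$-good, as do wedges of spheres; the paper states outright that "most free spectral Lie algebras" are not Goodwillie complete. So precisely the objects you want to feed into your base case are counterexamples to the tower convergence your reduction requires. The irony is that the conclusion you are after (bar-resolution convergence) \emph{is} true for all $X$, but one cannot reach it through the Goodwillie tower of the identity, because that tower computes a genuinely different completion.

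The paper's actual argument avoids all of this. It applies Theorem~\ref{thm:barrbeck} directly by checking the other half of Barr--Beck: that $\Phi_n$ \emph{preserves geometric realizations}. The reduction is elementary: since equivalences of $T(n)$-local spectra are detected after smashing with a finite type $n$ complex, one need only show each $\Phi_V$ preserves geometric realizations; $\Phi_V$ is a filtered colimit of copies of $L_{T(n)}\Sigma^\infty\mathrm{Map}_*(V,-)$, and $\mathrm{Map}_*(V,-)$ preserves geometric realizations of diagrams of sufficiently highly connected spaces --- a connectivity bound that one can build into the construction of $\mathcal{S}_{v_n}$ by choosing the finite space $V_{n+1}$ defining $L_n^f$ to have dimension at least $\dim V$. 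No Goodwillie calculus and no convergence claims about the identity functor are needed; bar-resolution convergence then follows formally (via conservativity) rather than being verified by hand. You should replace the Goodwillie-tower reduction with this connectivity argument.
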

\begin{proof}[Sketch of proof]
By Theorem \ref{thm:barrbeck} it suffices to check that $\Phi_n$ is conservative (which is essentially immediate from the construction of $\mathcal{S}_{v_n}$) and that it preserves geometric realizations (i.e., colimits of simplicial objects). Since a map of $T(n)$-local spectra is an equivalence if and only if it is an equivalence after smashing with some finite type $n$ spectrum, it suffices to show that $\Phi_V$ preserves geometric realizations, for $V$ a finite space of type $n$. This functor can be expressed as the following colimit:
\begin{equation*}
\Phi_V(X) = \varinjlim(\Sigma^\infty\mathrm{Map}_*(V,X) \rightarrow \Sigma^{\infty-d}\mathrm{Map}_*(V,X) \rightarrow \cdots).
\end{equation*}
Therefore it suffices to show that the functor
\begin{equation*}
L_{T(n)}\Sigma^\infty\mathrm{Map}_*(V,-): \mathcal{S}_{v_n} \rightarrow \mathrm{Sp}_{T(n)}
\end{equation*}
preserves geometric realizations. Generally, a functor of the form $\mathrm{Map}_*(V,-)$ only preserves geometric realizations of diagrams of spaces which are at least $\mathrm{dim}(V)$-connected. But this will suffice; we can take the dimension of the space $V_{n+1}$ used to define the localization $L_n^f$ to be at least the dimension of $V$.
\end{proof}

\begin{remark}
Although the proof of Theorem \ref{thm:ehmm} is quite formal, the conclusion is perhaps surprising; it states that the $v_n$-periodic part $\mathcal{S}_{v_n}$ of unstable homotopy theory can be completely described in terms of stable homotopy theory, namely the stable $\infty$-category $\mathrm{Sp}_{v_n}$ and the monad $\Phi_n\Theta_n$.
\end{remark}

It now remains to argue that $\Phi_n\Theta_n$ is in fact the free spectral Lie algebra monad on $\mathrm{Sp}_{v_n}$. This will use some special features of the $\infty$-category of functors from the $\infty$-category of $T(n)$-local spectra to itself. It turns out that in this context the relation between operads and monads is much tighter than in a general symmetric monoidal $\infty$-category. To explain the situation we introduce some terminology:

\index{coanalytic functor}
\begin{definition}
\label{def:coanalytic}
A functor $F: \mathrm{Sp}_{T(n)} \rightarrow \mathrm{Sp}_{T(n)}$ is \emph{coanalytic} if it is equivalent to one of the form $F_{\mathcal{O}}$, with $\mathcal{O}$ a symmetric sequence of $T(n)$-local spectra:
\begin{equation*}
F(X) \simeq L_{T(n)}\bigoplus_{k \geq 1} (\mathcal{O}(k) \otimes X^{\otimes k})_{h\Sigma_k}.
\end{equation*}
We write $\mathrm{coAn}(\mathrm{Sp}_{T(n)})$ for the full subcategory of $\mathrm{Fun}(\mathrm{Sp}_{T(n)},\mathrm{Sp}_{T(n)})$ on the coanalytic functors.
\end{definition}

We discussed the assignment
\begin{equation*}
\mathrm{SymSeq}(\mathcal{C}) \rightarrow \mathrm{Fun}(\mathcal{C},\mathcal{C})\colon \mathcal{O} \mapsto F_{\mathcal{O}}
\end{equation*}
in Section \ref{sec:koszul}. For general $\mathcal{C}$ this is far from fully faithful. However, the $T(n)$-local setting is quite special:

\begin{proposition}
\label{prop:coanalytic}
The functor above gives an equivalence of $\infty$-categories $\mathrm{SymSeq}(\mathrm{Sp}_{T(n)}) \rightarrow \mathrm{coAn}(\mathrm{Sp}_{T(n)})$.
\end{proposition}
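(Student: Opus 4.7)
The plan is to verify essential surjectivity and fully faithfulness separately. Essential surjectivity is immediate from the definition of $\mathrm{coAn}(\mathrm{Sp}_{T(n)})$: every object is by hypothesis of the form $F_\mathcal{O}$ for some symmetric sequence $\mathcal{O}$. The substantive content is fully faithfulness, i.e., that the natural map
\begin{equation*}
\prod_{k \geq 1} \mathrm{Map}_{\mathrm{Sp}_{T(n)}}(\mathcal{O}(k), \mathcal{P}(k))^{h\Sigma_k} \longrightarrow \mathrm{Map}_{\mathrm{Fun}}(F_\mathcal{O}, F_\mathcal{P})
\end{equation*}
is an equivalence. The core input is the vanishing of the Tate construction $X^{tG} \simeq 0$ for any $T(n)$-local spectrum $X$ and any finite group $G$ (Kuhn; Hovey--Sadofsky), which implies $X^{hG} \simeq X_{hG}$ (cf.\ Remark \ref{rmk:Tatevanishing}, transplanted from the rational to the $T(n)$-local setting).

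First I would decompose $F_\mathcal{O} = \bigoplus_j F_{\mathcal{O},j}$ and $F_\mathcal{P} = \bigoplus_k F_{\mathcal{P},k}$ into their homogeneous summands, where $F_{\mathcal{O},k}(X) := L_{T(n)}(\mathcal{O}(k) \otimes X^{\otimes k})_{h\Sigma_k}$; each $F_{\mathcal{O},k}$ is a $k$-homogeneous functor in Goodwillie's sense, and the displayed direct sum is also its Goodwillie tower precisely because Tate vanishing forces the extensions between consecutive layers to split. It follows that the target mapping space decomposes as $\prod_j \mathrm{Map}(F_{\mathcal{O},j}, F_\mathcal{P})$, and it will suffice to show (a) $\mathrm{Map}(F_{\mathcal{O},j}, F_{\mathcal{P},k}) \simeq 0$ for $j \neq k$, and (b) $\mathrm{Map}(F_{\mathcal{O},k}, F_{\mathcal{P},k}) \simeq \mathrm{Map}(\mathcal{O}(k), \mathcal{P}(k))^{h\Sigma_k}$.

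For (a) I would argue via Goodwillie derivatives. For $j < k$, any natural transformation $F_{\mathcal{O},j} \to F_{\mathcal{P},k}$ must factor through $P_j F_{\mathcal{P},k} \simeq 0$, since a $k$-homogeneous functor has vanishing $j$-excisive approximation. For $j > k$, symmetrically, the transformation is determined by its effect on the $j$-th cross-effect, but the $j$-th cross-effect of $F_{\mathcal{P},k}$ vanishes. For (b), the classification of homogeneous functors on a stable $\infty$-category (Goodwillie) identifies $k$-homogeneous functors with their multilinear cross-effects, namely $\Sigma_k$-objects in $\mathrm{Sp}_{T(n)}$; natural transformations between them are then $\Sigma_k$-equivariant maps on the cross-effects. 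Crucially, the $k$-th cross-effect of $F_{\mathcal{O},k}$ is $\mathcal{O}(k) \otimes X_1 \otimes \cdots \otimes X_k$ (with $\Sigma_k$ acting diagonally on the coefficients and by permutation of inputs), precisely because Tate vanishing makes the homotopy orbits construction compatible with the cross-effect computation with no parasitic norm corrections.

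The main obstacle is the honest use of Tate vanishing to both (i) ensure the direct-sum decomposition of $F_\mathcal{O}$ coincides with its Goodwillie tower and (ii) turn the standard $\mathrm{Nat}$-computation for homogeneous functors into a clean identification with $\mathrm{Map}(\mathcal{O}(k), \mathcal{P}(k))^{h\Sigma_k}$ rather than some intermediate Tate-type object. A second technical point is convergence of the Goodwillie tower of a coanalytic functor, so that natural transformations into $F_\mathcal{P}$ are controlled by natural transformations into each layer; this should follow from the connectivity estimates for the layers $F_{\mathcal{P},k}$ applied to a test object, or equivalently from the fact that coanalytic functors are by construction colimits of their polynomial approximations.
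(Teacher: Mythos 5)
Your overall strategy matches the paper's: decompose a coanalytic functor into its homogeneous layers, show off-diagonal mapping spaces vanish, and identify the diagonal ones. But there are two genuine gaps in how you execute this.

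First, your argument for the off-diagonal vanishing in item (a) is backwards. For $j < k$, you claim a transformation $F_{\mathcal{O},j} \to F_{\mathcal{P},k}$ "must factor through $P_j F_{\mathcal{P},k}$," but $P_j$ is a \emph{localization}, not a colocalization: its universal property produces factorizations of maps \emph{out of} $F_{\mathcal{P},k}$ into $j$-excisive functors, not factorizations of maps \emph{into} $F_{\mathcal{P},k}$ from $j$-excisive ones. The easy direction is actually $j > k$: there, the target $F_{\mathcal{P},k}$ is $k$-excisive, so the transformation factors through $P_k F_{\mathcal{O},j} \simeq 0$. The hard direction is $j < k$, and it is \emph{not} a formal consequence of Goodwillie calculus; in a general stable $\infty$-category the Tate diagonal $X \to (X^{\otimes k})^{t\Sigma_k}$ gives a nontrivial transformation from a $1$-homogeneous to a $k$-homogeneous functor. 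Killing the $j<k$ case is precisely where the Kuhn/Clausen--Mathew Tate vanishing in $\mathrm{Sp}_{T(n)}$ must enter; you invoke Tate vanishing elsewhere (splitting the Goodwillie tower, fixing the orbits-versus-fixed-points discrepancy in (b)) but not at the one place the paper actually needs it for the off-diagonal vanishing.

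Second, the reduction to finitely many layers (your "second technical point") cannot be done by connectivity. You suggest "connectivity estimates for the layers $F_{\mathcal{P},k}$," but $T(n)$-local spectra carry no useful notion of connectivity, so there is no convergence of the Goodwillie tower to exploit. The paper instead relies on a nilpotence argument of Mathew to show that a natural transformation from a fixed $k$-homogeneous functor into a coanalytic functor $\bigoplus_j F_{\mathcal{P},j}$ factors through a finite sub-sum; this is a substantive step (again ultimately resting on Tate vanishing) and not something you can wave away.

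Your identification of the diagonal term in (b), using the classification of homogeneous functors together with Tate vanishing to pass between orbits and fixed points, is in good shape.
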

\begin{proof}[Sketch of proof]
The proof consists of two ingredients. First, one needs the fact that any natural transformation
\begin{equation*}
(\mathcal{O}(k) \otimes X^{\otimes k})_{h\Sigma_k} \rightarrow (\mathcal{P}(l) \otimes X^{\otimes l})_{h\Sigma_l}
\end{equation*}
between such homogeneous functors is null whenever $k \neq l$. This follows from the general theory of Goodwillie calculus when $k > l$. For the case $k < l$ one needs the additional fact that Tate spectra associated with the symmetric groups vanish in the $T(n)$-local category. This is a fundamental result of Kuhn \cite{kuhntate}; a short alternative proof is provided by Clausen--Mathew \cite{clausenmathew}. The second ingredient is that any natural transformation from a $k$-homogeneous functor
\begin{equation*}
X \mapsto (\mathcal{O}(k) \otimes X^{\otimes k})_{h\Sigma_k}
\end{equation*}
to a coanalytic functor factors through a finite sum of layers. This uses a nilpotence argument of Mathew, which ultimately relies on Tate vanishing again (see the appendix of \cite{heuts}).
\end{proof}

The equivalence of Proposition \ref{prop:coanalytic} sends the composition product of symmetric sequences to the composition of functors. Hence we find the following alternative description of (co)operads in the $T(n)$-local setting:

\index{operad}
\begin{corollary}
\label{cor:operadTn}
The $\infty$-category of operads (resp. of cooperads) in $T(n)$-local spectra is equivalent to the $\infty$-category of monoids (resp. comonoids) in the $\infty$-category $\mathrm{coAn}(\mathrm{Sp}_{T(n)})$. In other words, a (co)monad on $\mathrm{Sp}_{T(n)}$ whose underlying functor is coanalytic corresponds essentially uniquely to a (co)operad in $\mathrm{Sp}_{T(n)}$.
\end{corollary}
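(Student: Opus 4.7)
The plan is to deduce Corollary \ref{cor:operadTn} from Proposition \ref{prop:coanalytic} by upgrading the latter to an equivalence of monoidal $\infty$-categories, and then invoking the general fact that a monoidal equivalence induces equivalences on the associated $\infty$-categories of monoids and of comonoids. The payoff is that both sides of the desired statement are, by definition, (co)monoids in a monoidal $\infty$-category: the left-hand side because an operad (resp.\ cooperad) in $\mathrm{Sp}_{T(n)}$ is a monoid (resp.\ comonoid) in $(\mathrm{SymSeq}(\mathrm{Sp}_{T(n)}), \circ)$, the right-hand side by definition.

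First I would recall that the composition product $\circ$ on $\mathrm{SymSeq}(\mathrm{Sp}_{T(n)})$ is characterized (as in Section \ref{sec:koszul}) by requiring the functor
\begin{equation*}
\mathrm{SymSeq}(\mathrm{Sp}_{T(n)}) \longrightarrow \mathrm{Fun}(\mathrm{Sp}_{T(n)}, \mathrm{Sp}_{T(n)}), \qquad \mathcal{O} \longmapsto F_{\mathcal{O}},
\end{equation*}
to be monoidal with respect to composition of functors on the target. Next I would check that coanalytic functors are closed under composition: this is immediate from $F_{\mathcal{O} \circ \mathcal{P}} \simeq F_{\mathcal{O}} \circ F_{\mathcal{P}}$ together with the fact that the composition product of two symmetric sequences of $T(n)$-local spectra is again such a symmetric sequence (one must only observe that the defining colimit and coinvariants are preserved by $T(n)$-localization since the latter is a left adjoint). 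Consequently, composition of functors restricts to a monoidal structure on $\mathrm{coAn}(\mathrm{Sp}_{T(n)})$, and the functor $\mathcal{O} \mapsto F_{\mathcal{O}}$ of Proposition \ref{prop:coanalytic} is then a monoidal equivalence between $(\mathrm{SymSeq}(\mathrm{Sp}_{T(n)}),\circ)$ and $(\mathrm{coAn}(\mathrm{Sp}_{T(n)}), \circ)$.

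Finally, since passing to monoid (resp.\ comonoid) objects is a functor on the $\infty$-category of monoidal $\infty$-categories which sends monoidal equivalences to equivalences, applying it to the above yields the two claimed equivalences of $\infty$-categories.

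The main technical obstacle I anticipate is not any single step in isolation but rather the coherent $\infty$-categorical packaging of the monoidal structure. In ordinary category theory, "preserves composition" as a property suffices, but in the $\infty$-categorical setting one must exhibit a map of $\mathbf{\Delta}^{\mathrm{op}}$-shaped diagrams (or of associative operad algebras in $\mathrm{Cat}_\infty$) encoding the monoidal structure. In practice this is handled by noting that $\circ$ on $\mathrm{SymSeq}(\mathrm{Sp}_{T(n)})$ is defined precisely by pulling back the composition monoidal structure on endofunctors along $\mathcal{O} \mapsto F_{\mathcal{O}}$ (as in \cite{brantner,haugseng}), so the monoidality of the equivalence is built into its construction rather than being something one must verify ex post facto.
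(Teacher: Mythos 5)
Your argument is essentially the paper's: the paper dispatches the corollary in one sentence by invoking Proposition~\ref{prop:coanalytic} together with the observation that the equivalence $\mathcal{O} \mapsto F_{\mathcal{O}}$ carries the composition product to composition of functors, and then passing to monoids and comonoids. Your write-up fills in the same steps more carefully, in particular correctly identifying (and resolving) the coherence issue that the monoidal structure on symmetric sequences is defined precisely by pulling back composition along $\mathcal{O} \mapsto F_{\mathcal{O}}$, so monoidality of the comparison functor is built in by construction.
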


%\begin{definition}
%\label{def:operadTn}
%An operad (resp. cooperad) in $T(n)$-local spectra is a monoid (resp. comonoid) in the $\infty$-category of coanalytic functors. In other words, it is a monad (resp. comonad) on $\mathrm{Sp}_{T(n)}$ of which the underlying functor is coanalytic.
%\end{definition}

The obvious example, which is both an operad and a cooperad, is of course the identity functor of $\mathrm{Sp}_{T(n)}$. The first nontrivial example is the following:

\begin{theorem}[Kuhn \cite{kuhnandrequillen}]
\label{thm:kuhn}
For $E \in \mathrm{Sp}_{T(n)}$ there is a natural equivalence
\begin{equation*}
\Sigma^\infty_{v_n}\Omega^\infty_{v_n}(E) \simeq L_{T(n)}\bigoplus_{k \geq 1} E^{\otimes k}_{h\Sigma_k}.
\end{equation*}
In particular the comonad $\Sigma^\infty_{v_n}\Omega^\infty_{v_n}$ is coanalytic, hence a cooperad. 
\end{theorem}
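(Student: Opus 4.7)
The plan is to analyze the Goodwillie tower of $\Sigma^\infty \Omega^\infty$, show that it both splits and converges after $T(n)$-localization using Kuhn's Tate vanishing theorem, and then read off the stated formula. First I would recall the classical calculation $\partial_k(\Sigma^\infty \Omega^\infty) \simeq \mathbb{S}$ with trivial $\Sigma_k$-action, so that the $k$-th homogeneous layer of the Goodwillie tower is the functor $E \mapsto (E^{\otimes k})_{h\Sigma_k}$. Post-composing with $L_{T(n)}$ yields a tower of coanalytic functors on $\mathrm{Sp}_{T(n)}$, equipped with a natural map from $\Sigma^\infty_{v_n}\Omega^\infty_{v_n}$.

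Next I would argue, by induction on $k$, that this tower is split. The splitting of the stage $L_{T(n)}P_k \to L_{T(n)}P_{k-1}$ reduces to the vanishing of its $k$-invariant, a natural transformation from the $(k-1)$-excisive functor $L_{T(n)}P_{k-1}(\Sigma^\infty\Omega^\infty)$ to the $k$-homogeneous functor $\Sigma L_{T(n)}(-)^{\otimes k}_{h\Sigma_k}$. Under the inductive identification
\[
L_{T(n)}P_{k-1}(\Sigma^\infty\Omega^\infty) \simeq L_{T(n)}\bigoplus_{j<k}(-)^{\otimes j}_{h\Sigma_j},
\]
this $k$-invariant decomposes into its components between distinct homogeneous layers, each of which is null by the same Tate-vanishing argument underlying Proposition~\ref{prop:coanalytic} (which traces back to \cite{kuhntate}). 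This yields $L_{T(n)}P_k(\Sigma^\infty\Omega^\infty)(E) \simeq L_{T(n)}\bigoplus_{j=1}^k(E^{\otimes j})_{h\Sigma_j}$, naturally in $E$ and compatibly with the tower structure.

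Finally I would establish convergence on $\mathrm{Sp}_{T(n)}$, i.e., that the resulting map $\Sigma^\infty_{v_n}\Omega^\infty_{v_n}(E) \to L_{T(n)}\bigoplus_{k\geq 1}(E^{\otimes k})_{h\Sigma_k}$ is an equivalence. For $E$ that arises as $L_{T(n)}$ of a connective spectrum $E'$, this is immediate from the classical Snaith-type splitting $\Sigma^\infty\Omega^\infty E' \simeq \bigvee_k(E'^{\otimes k})_{h\Sigma_k}$; the general case is reduced to this by writing $E$ as a filtered colimit of such and using that $\Sigma^\infty\Omega^\infty$ together with each layer of the split tower preserves filtered colimits. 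Coanalyticity of $\Sigma^\infty_{v_n}\Omega^\infty_{v_n}$ is then immediate from Definition~\ref{def:coanalytic} with associated symmetric sequence $\mathcal{O}(k)=\mathbb{S}$, and the cooperad structure is supplied by Corollary~\ref{cor:operadTn}.

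The main obstacle is the convergence step: the Goodwillie tower of $\Sigma^\infty\Omega^\infty$ does not converge on arbitrary non-connective spectra, and even with the step-two splitting in hand one must carefully distinguish the limit of the split tower from the corresponding infinite product. The key leverage is that the Tate-vanishing-induced splitting converts the a priori delicate convergence question into the more tractable compatibility of $T(n)$-localization with both the Snaith splitting on connective spectra and with filtered colimits.
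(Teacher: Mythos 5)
Your approach is genuinely different from the paper's. The paper explicitly remarks that it is \emph{not} following Kuhn's original route via the Goodwillie tower of $\Sigma^\infty\Omega^\infty$, but instead gives a short universal-property argument: it identifies $\mathrm{Map}_{\mathrm{CAlg}^{\mathrm{nu}}}(\Sigma^\infty_{v_n}\Omega^\infty_{v_n}E, R) \simeq \mathrm{Map}(E,R)$ for $T(n)$-local $R$ (via $\mathrm{GL}_1$ and the fact that $\Phi_n\Omega^\infty_{v_n} \simeq \mathrm{id}$), compares with the same computation for the free non-unital commutative algebra on $E$, and concludes by Yoneda. This sidesteps all questions about tower convergence and holds immediately for every $T(n)$-local $E$. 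Your argument, by contrast, is essentially Kuhn's original splitting argument, and the first two steps of it (identifying the layers, and splitting the $T(n)$-localized tower by Tate vanishing, which is exactly the mechanism behind Proposition~\ref{prop:coanalytic}) are correct in outline.

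The problem is the convergence step, and it is not a matter of missing detail but of an incorrect supporting claim. There is no ``classical Snaith-type splitting $\Sigma^\infty\Omega^\infty E' \simeq \bigvee_k(E'^{\otimes k})_{h\Sigma_k}$'' for general connective $E'$: the Snaith splitting holds for $E' = \Sigma^\infty X$ with $X$ a connected pointed space, and the integral Goodwillie tower of $\Sigma^\infty\Omega^\infty$ has nonzero $k$-invariants in general, so it does \emph{not} split before $T(n)$-localization. What \emph{is} true is that the tower converges on $0$-connected spectra, but convergence gives $\Sigma^\infty\Omega^\infty E' \simeq \varprojlim_k P_k(E')$, and after applying your $T(n)$-local splitting this becomes a \emph{product} $\prod_k L_{T(n)}(E'^{\otimes k})_{h\Sigma_k}$, not the sum. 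You flag the product-vs-sum issue yourself at the end, but the argument does not address it, and this is precisely the delicate point: one needs either increasing connectivity of the layers to identify wedge with product (which requires $E'$ at least $1$-connected and makes the filtered-colimit reduction to such $E'$ nontrivial), or a separate argument that $L_{T(n)}$ turns the relevant limit into the localized sum. In addition, passing from a statement about $L_{T(n)}\Sigma^\infty\Omega^\infty$ on connective input to a statement about the comonad $\Sigma^\infty_{v_n}\Omega^\infty_{v_n}$ on $\mathrm{Sp}_{T(n)}$ requires unwinding what $\Omega^\infty_{v_n}$ actually is on non-connective $T(n)$-local spectra; your filtered-colimit reduction asserts but does not justify that this passage behaves as expected. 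These are exactly the subtleties the paper's Yoneda argument is designed to avoid, and they are why the paper says the result ``essentially follows'' from Kuhn rather than literally following.
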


This result essentially follows from a theorem of Kuhn \cite{kuhnandrequillen} on the splitting of the functor $L_{T(n)}\Sigma^\infty\Omega^\infty$ on a suitable class of spectra. Here we outline a different approach:

\begin{proof}[Sketch of proof]
For a commutative ring spectrum $R$ and a spectrum $X$, there is the well-known adjunction
\begin{equation*}
\mathrm{Map}_{\mathrm{CAlg}}(\Sigma^\infty_+\Omega^\infty X, R) \simeq \mathrm{Map}_{\Omega^\infty}(\Omega^\infty X, \mathrm{GL}_1 R)
\end{equation*}
where the right-hand side denotes the space of infinite loop maps from $\Omega^\infty X$ into the space of units $\mathrm{GL}_1 R$. A rather straightforward adaptation of this setup to our context provides an adjunction
\begin{equation*}
\mathrm{Map}_{\mathrm{CAlg}^{\mathrm{nu}}}(\Sigma^\infty_{v_n}\Omega^\infty_{v_n} E, R) \simeq \mathrm{Map}_{\Omega^\infty}(\Omega^\infty_{v_n} E, M(\mathrm{GL}_1 R))
\end{equation*}
for a non-unital $T(n)$-local commutative ring spectrum $R$ and $E \in \mathrm{Sp}_{T(n)}$. Here $M: \mathcal{S}_* \rightarrow \mathcal{S}_{v_n}$ denotes the localization functor. In fact, $M(\mathrm{GL}_1 R)$ can be identified with $\Omega^\infty_{v_n} R$ as an object of $\mathcal{S}_{v_n}$, but the $\mathbf{E}_\infty$-structure corresponds to the multiplication, rather than addition, on $R$. Applying $\Phi_n$ and using $\Phi_n\Omega^\infty_{v_n} \simeq \mathrm{id}$ then gives a further equivalence
\begin{equation*}
\mathrm{Map}_{\Omega^\infty}(\Omega^\infty_{v_n} E, M(\mathrm{GL}_1 R)) \simeq \mathrm{Map}(E, R),
\end{equation*}
so that we have found a natural equivalence
\begin{equation*}
\mathrm{Map}_{\mathrm{CAlg}^{\mathrm{nu}}}(\Sigma^\infty_{v_n}\Omega^\infty_{v_n} E, R) \simeq \mathrm{Map}(E, R).
\end{equation*}
On the other hand, the universal property of the free non-unital commutative algebra also provides an equivalence
\begin{equation*}
\mathrm{Map}_{\mathrm{CAlg}^{\mathrm{nu}}}(L_{T(n)}\mathrm{Sym}_{\geq 1} E, R) \simeq \mathrm{Map}(E,R)
\end{equation*}
with
\begin{equation*}
L_{T(n)}\mathrm{Sym}_{\geq 1} E = L_{T(n)}\bigoplus_{k \geq 1} E^{\otimes k}_{h\Sigma_k}
\end{equation*}
denoting the spectrum of the theorem. The conclusion now follows from the Yoneda lemma.
\end{proof}

\begin{remark}
Chasing through the proof above gives an explicit description of the equivalence of the theorem. Applying $\Phi_n$ to the unit map $\eta: \Omega^\infty_{v_n} E \rightarrow \Omega^\infty_{v_n}\Sigma^\infty_{v_n}\Omega^\infty_{v_n} E$ and using $\Phi_n \Omega^\infty_{v_n} \simeq \mathrm{id}$ gives a natural map
\begin{equation*}
\lambda: E \rightarrow L_{T(n)}\Sigma^\infty_{v_n}\Omega^\infty_{v_n} E.
\end{equation*}
Since the right-hand side is a non-unital commutative ring spectrum, this map $\lambda$ naturally extends to a map of non-unital commutative rings
\begin{equation*}
L_{T(n)}\mathrm{Sym}_{\geq 1}(E) \rightarrow \Sigma^\infty_{v_n}\Omega^\infty_{v_n} E
\end{equation*}
which is an equivalence by the theorem.
\end{remark}

In fact, as already suggested by the formula of Theorem \ref{thm:kuhn}, the cooperad $\Sigma^\infty_{v_n}\Omega^\infty_{v_n}$ really plays the role of the commutative cooperad: the terms of the corresponding symmetric sequence are (the $T(n)$-localization of) the sphere spectrum in each degree and one can show (e.g. using Goodwillie calculus) that the cooperad structure maps are as expected. We will use Theorem \ref{thm:kuhn} in two ways. The first is a very useful characterization of coanalytic functors:

\index{coanalytic functor}
\begin{proposition}
\label{prop:coanalytic2}
A functor $F\colon \mathrm{Sp}_{T(n)} \rightarrow \mathrm{Sp}_{T(n)}$ is coanalytic if and only if it preserves filtered colimits and geometric realizations.
\end{proposition}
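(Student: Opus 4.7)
The forward direction is routine: if $F(X) \simeq L_{T(n)} \bigoplus_{k \geq 1} (\mathcal{O}(k) \otimes X^{\otimes k})_{h\Sigma_k}$, I would verify preservation of sifted colimits piece by piece. Smashing with a fixed spectrum $\mathcal{O}(k)$, taking homotopy orbits, forming direct sums, and applying $L_{T(n)}$ each preserve all colimits. The only subtle ingredient is that $X \mapsto X^{\otimes k}$ preserves sifted colimits: this holds because the diagonal functor $\mathcal{C} \to \mathcal{C}^k$ preserves sifted colimits (essentially the defining property of siftedness), while the $k$-fold tensor product $\mathcal{C}^k \to \mathcal{C}$ preserves colimits separately in each variable.

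For the converse, I would apply Goodwillie calculus to $F$. The hypothesis that $F$ preserves filtered colimits together with geometric realizations is exactly the standard setting where the Taylor tower $F \to \cdots \to P_k F \to P_{k-1} F \to \cdots$ behaves well. Each layer $D_k F$ is $k$-homogeneous, which in the stable setting forces a presentation $D_k F(X) \simeq (\partial_k F \otimes X^{\otimes k})_{h\Sigma_k}$: here I would crucially invoke Kuhn's Tate vanishing for $T(n)$-local spectra (exactly as in the proof of Proposition \ref{prop:coanalytic}) to identify the standard fixed-point expression for a homogeneous layer with its orbit counterpart. Each $D_k F$ is then visibly coanalytic.

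Next, I would argue that the tower splits, i.e.\ that $P_k F \simeq \bigoplus_{j \leq k} D_j F$ for every $k$, and that $P_\infty F := \lim_k P_k F$ is itself coanalytic. Both steps follow from the Tate-vanishing arguments already used in Proposition \ref{prop:coanalytic}: the vanishing of natural transformations between homogeneous functors of different degrees kills all potential extensions in the tower, while Mathew's nilpotence input (appendix of \cite{heuts}) ensures that the relevant products and sums of homogeneous layers agree, so that $P_\infty F$ corresponds to the symmetric sequence $\{\partial_k F\}_{k \geq 1}$ via Proposition \ref{prop:coanalytic}.

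The main obstacle is establishing convergence, i.e.\ that the natural map $F \to P_\infty F$ is an equivalence. My plan is to first verify convergence on sufficiently highly-connected compact $T(n)$-local spectra, using standard Blakers--Massey-style convergence criteria in Goodwillie calculus together with the fact that the sifted-colimit hypothesis on $F$ controls its behaviour on cubical diagrams. One can then extend to arbitrary compact objects by suspending and desuspending, and finally to all of $\mathrm{Sp}_{T(n)}$ using that both $F$ and $P_\infty F$ preserve filtered colimits and that compact $T(n)$-local spectra generate under filtered colimits. This convergence step is where the specific features of the $T(n)$-local setting — compact generation together with the strong Tate-vanishing theorems — must be deployed most carefully.
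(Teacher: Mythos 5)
Your forward direction is fine. The converse, however, has a fatal flaw at the convergence step, and the issue is not one of difficulty but of truth: the map $F \to P_\infty F := \varprojlim_k P_k F$ is simply \emph{not} an equivalence for a general coanalytic $F$, so no Blakers--Massey argument will rescue it. Concretely, by Kuhn's theorem (Theorem~\ref{thm:kuhn}) the comonad $\Sigma^\infty_{v_n}\Omega^\infty_{v_n}$ is coanalytic with $\Sigma^\infty_{v_n}\Omega^\infty_{v_n}(X) \simeq L_{T(n)}\bigoplus_{k\geq 1}(X^{\otimes k})_{h\Sigma_k}$, whereas the inverse limit of its Goodwillie tower is the \emph{product} $\prod_{k}(X^{\otimes k})_{h\Sigma_k}$; the natural map from sum to product is the Goodwillie completion and is not an equivalence. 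This is exactly the source of the non-examples of $\Phi_n$-good spaces discussed at the end of Section~\ref{sec:Liealgebras}. Relatedly, your claim that $P_\infty F$ corresponds to the symmetric sequence $\{\partial_k F\}_{k\geq 1}$ via Proposition~\ref{prop:coanalytic} is incorrect: the functor associated to that symmetric sequence is the direct sum of the layers, but $P_\infty F$ is their product. Mathew's nilpotence argument (used in Proposition~\ref{prop:coanalytic}) controls natural transformations \emph{out of} a homogeneous functor into a coanalytic one and implies such a map factors through a finite sum of layers; it does not collapse infinite sums to infinite products, and there is no $t$-structure or connectivity in $\mathrm{Sp}_{T(n)}$ that would let you force that collapse on a generating class of compact objects.

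The paper's proof takes a different and more direct route that avoids the Goodwillie tower of $F$ entirely. One shows that any functor $F$ preserving filtered colimits and geometric realizations can be written as a colimit of functors that are manifestly coanalytic --- essentially built from $\Sigma^\infty_{v_n}\Omega^\infty_{v_n}$ (known to be coanalytic by Theorem~\ref{thm:kuhn}) and objectwise tensoring --- and then uses that coanalytic functors are closed under colimits in $\mathrm{Fun}(\mathrm{Sp}_{T(n)},\mathrm{Sp}_{T(n)})$. In other words, one approximates $F$ \emph{from below} by colimits of coanalytic building blocks rather than \emph{from above} by the Taylor tower; this sidesteps the failure of convergence, which is precisely what your approach runs into.
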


A proof of this result is given in \cite{heuts} following an argument of Lurie; essentially, one writes any functor $F$ preserving filtered colimits and geometric realizations as a colimit of functors closely resembling $\Sigma^\infty_{v_n}\Omega^\infty_{v_n}$ and uses the fact that a colimit of coanalytic functors is coanalytic.

Our goal is to analyze the monad $\Phi_n\Theta_n$. First, one observes it is actually corresponds to an operad (cf. Corollary \ref{cor:operadTn}):

\begin{corollary}
\label{cor:PhiTheta}
The functor $\Phi_n\Theta_n$ is coanalytic.
\end{corollary}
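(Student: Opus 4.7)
The plan is to invoke Proposition \ref{prop:coanalytic2}: a functor on $\mathrm{Sp}_{T(n)}$ is coanalytic if and only if it preserves filtered colimits and geometric realizations. Since $\Theta_n$ is a left adjoint and therefore preserves every colimit, the burden falls entirely on $\Phi_n\colon \mathcal{S}_{v_n} \to \mathrm{Sp}_{v_n}$: I must show that $\Phi_n$ preserves filtered colimits and geometric realizations.

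The next step is to test these preservation properties after smashing with the Spanier--Whitehead dual of a fixed finite pointed space $V$ of type $n$, exploiting the characterizing equivalence $V^\vee \otimes \Phi_n(X) \simeq \Phi_V(X)$. Since $V$ is dualizable the functor $V^\vee \otimes -$ on $\mathrm{Sp}_{v_n}$ is both a left and a right adjoint, hence preserves all colimits. Since $V^\vee$ is a finite spectrum of type $n$, it is a compact generator of $\mathrm{Sp}_{T(n)}$, so the functor $V^\vee \otimes -$ is conservative. Thus the natural comparison map $\mathrm{colim}\, \Phi_n(X_\bullet) \to \Phi_n(\mathrm{colim}\, X_\bullet)$ becomes, after smashing with $V^\vee$, the comparison map for $\Phi_V$; if the latter is an equivalence, then so is the former. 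The problem therefore reduces to showing that $\Phi_V\colon \mathcal{S}_{v_n} \to \mathrm{Sp}_{v_n}$ preserves filtered colimits and geometric realizations.

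The geometric realization claim is precisely what was established in the proof of Theorem \ref{thm:ehmm}: one arranges the defining finite suspension $V_{n+1}$ of $L_n^f$ to satisfy $\dim(V_{n+1}) \geq \dim(V)$, whereupon every object of $\mathcal{S}_{v_n}$ is connected enough that $\mathrm{Map}_*(V,-)$ commutes with geometric realizations, and $\Phi_V = \varinjlim_k \Sigma^{\infty - kd}\mathrm{Map}_*(V,-)$ inherits this property from the sequential colimit. For filtered colimits the argument is more robust: $V$ is a finite pointed space, hence compact in $\mathcal{S}_*$, so $\mathrm{Map}_*(V,-)$ commutes with filtered colimits in $\mathcal{S}_*$; composing with $\Sigma^\infty$ and forming a further filtered colimit preserves this property. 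One then descends to $\mathcal{S}_{v_n}$ using that $M\colon \mathcal{S}_* \to \mathcal{S}_{v_n}$ preserves filtered colimits by Theorem \ref{thm:Mnf}(1), so filtered colimits in $\mathcal{S}_{v_n}$ are computed as $M$ of filtered colimits of lifts in $\mathcal{S}_*$.

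The main obstacle, such as it is, is really only the two-step detection step in the second paragraph: one must verify that checking a colimit-preservation property after smashing with $V^\vee$, combined with the conservativity of $V^\vee \otimes -$, suffices to conclude the corresponding property for $\Phi_n$ itself. All the remaining ingredients (coanalyticity detection, the behaviour of $\Phi_V$ on geometric realizations, the compactness of $V$ in $\mathcal{S}_*$, filtered-colimit preservation by $M$) are either already established in the text or are standard facts.
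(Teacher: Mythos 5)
Your proposal follows the paper's own route almost verbatim: apply Proposition \ref{prop:coanalytic2}, dispose of $\Theta_n$ because it is a left adjoint, and reduce $\Phi_n$ to $\Phi_V$ by smashing with a finite type~$n$ spectrum and using conservativity of $V^\vee\otimes -$ on $\mathrm{Sp}_{T(n)}$. The paper's proof is a two-line remark pointing back to the proof of Theorem \ref{thm:ehmm} (for geometric realizations) and declaring the filtered-colimit case ``obvious''; you have simply spelled out the reduction and the filtered-colimit step in more detail.

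One small imprecision worth noting: your final clause, ``filtered colimits in $\mathcal{S}_{v_n}$ are computed as $M$ of filtered colimits of lifts in $\mathcal{S}_*$,'' reads as though this is a formal consequence of $M$ preserving filtered colimits. Since $M$ is explicitly \emph{not} a reflective localization, one cannot simply push lifted diagrams through $M$; the honest justification goes through the concrete model $\mathcal{M}_n^f \subseteq \mathcal{S}_*$ constructed in the proof of Theorem \ref{thm:Mnf} (or via the compact generation of $\mathcal{S}_{v_n}$ by the image of a finite suspension of type~$n$). This is a minor bookkeeping point, not a gap in the approach; the paper elides it with the phrase ``as usual.''
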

\begin{proof}
By Proposition \ref{prop:coanalytic2} it suffices to check that $\Phi_n\Theta_n$ preserves filtered colimits and geometric realizations. For $\Theta_n$ there is nothing to check since it is a left adjoint; for $\Phi_n$, the fact that it preserves geometric realizations was part of the proof of Theorem \ref{thm:ehmm}. For filtered colimits one may reduce to $\Phi_V$ as usual, where it is obvious. 
\end{proof}

It remains to relate the operad $\Phi_n\Theta_n$ to the spectral Lie operad. In fact, we will indicate how to produce a map of operads
\begin{equation*}
\gamma: \Phi_n\Theta_n \rightarrow \mathrm{Cobar}(\Sigma^\infty_{v_n}\Omega^\infty_{v_n}).
\end{equation*}
The right-hand side is the $T(n)$-local spectral Lie operad, so we should then prove that $\gamma$ is an equivalence. To do this we proceed as follows. As explained in Section \ref{sec:monads}, the adjoint pair $(\Sigma^\infty_{v_n}, \Omega^\infty_{v_n})$ gives a `cosimplicial resolution' of the identity functor of $\mathcal{S}_{v_n}$:
\[
\begin{tikzcd}
\mathrm{id}_{\mathcal{S}_{v_n}} \ar{r} & \Omega^\infty_{v_n}\Sigma^\infty_{v_n}  \ar[shift left]{r} \ar[shift right]{r} & \Omega^\infty_{v_n}\Sigma^\infty_{v_n}\Omega^\infty_{v_n}\Sigma^\infty_{v_n} \ar{l} \ar{r} \ar[shift left = 2]{r} \ar[shift right = 2]{r} & 
 \Omega^\infty_{v_n}(\Sigma^\infty_{v_n}\Omega^\infty_{v_n})^2\Sigma^\infty_{v_n} \cdots \ar[shift left]{l} \ar[shift right]{l}.
\end{tikzcd}
\]
Now precompose with $\Theta_n$, postcompose with $\Phi_n$, and apply the equivalence $\Phi_n\Omega^\infty_{v_n} \simeq \mathrm{id}_{\mathrm{Sp}_{T(n)}} \simeq \Sigma^\infty_{v_n}\Theta_n$ to get the coaugmented cosimplicial object
\[
\begin{tikzcd}
\Phi_n\Theta_n \ar{r} & \mathrm{id}_{\mathrm{Sp}_{T(n)}} \ar[shift left]{r} \ar[shift right]{r} & \Sigma^\infty_{v_n}\Omega^\infty_{v_n} \ar{l} \ar{r} \ar[shift left = 2]{r} \ar[shift right = 2]{r} & (\Sigma^\infty_{v_n}\Omega^\infty_{v_n})^2 \cdots \ar[shift left]{l} \ar[shift right]{l}.
\end{tikzcd}
\]
The totalization produces $\mathrm{Cobar}(\Sigma^\infty_{v_n}\Omega^\infty_{v_n})$, the coaugmentation gives the map $\gamma$. The following is then the final step in the proof of Theorem \ref{thm:vnperiodicspaces}:

\index{spectral Lie operad}
\begin{theorem}
The map $\gamma$ is an equivalence, so that $\Phi_n\Theta_n$ is equivalent to the $T(n)$-localization of the spectral Lie operad $\mathbf{L}$.
\end{theorem}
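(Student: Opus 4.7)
The plan is to reduce verification that $\gamma$ is an equivalence to the level of underlying symmetric sequences. First I would check that both functors are coanalytic: the source by Corollary \ref{cor:PhiTheta}, and the target because each term of the cosimplicial object $(\Sigma^\infty_{v_n}\Omega^\infty_{v_n})^{\bullet+1}$ is coanalytic by Theorem \ref{thm:kuhn} together with the fact that the composition product of coanalytic functors is coanalytic, while the totalization preserves the colimits relevant for coanalyticity (Proposition \ref{prop:coanalytic2}). Moreover, $\gamma$ is a map of operads in $\mathrm{Sp}_{T(n)}$, since it is induced by a cosimplicial comonad resolution. Combining Proposition \ref{prop:coanalytic} and Corollary \ref{cor:operadTn}, these operads are determined by their underlying symmetric sequences, so it will suffice to prove $\gamma$ is an equivalence termwise, i.e., on each Goodwillie layer.

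Next I would identify both target and source symmetric sequences with $L_{T(n)}\mathbf{L}$. On the target side this is immediate: by Theorem \ref{thm:kuhn}, $\Sigma^\infty_{v_n}\Omega^\infty_{v_n}$ corresponds as a cooperad to the $T(n)$-localization of the commutative cooperad $\mathbf{Com}^\vee$, so its cobar is $L_{T(n)}\mathrm{Cobar}(\mathbf{Com}^\vee) = L_{T(n)}\mathbf{L}$ by the very definition of the spectral Lie operad in Section \ref{sec:Lie}. On the source side I would invoke the Arone--Ching theorem $\partial_*\mathrm{id}_{\mathcal{S}_*}\simeq \mathbf{L}$ reviewed in Section \ref{sec:Lie} and adapt the cosimplicial cobar presentation of derivatives used there to the $v_n$-periodic context. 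Precisely, using $\Phi_n\Omega^\infty_{v_n}\simeq\mathrm{id}$ and $\Sigma^\infty_{v_n}\Theta_n \simeq \mathrm{id}$, the cosimplicial object defining $\gamma$ becomes the cobar resolution that computes the Goodwillie derivatives of $\Phi_n\Theta_n$. The same Arone--Ching argument then identifies these derivatives termwise with $L_{T(n)}\mathbf{L}(k)$, and $\gamma$ is by construction the map realizing this identification.

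The hardest step will be controlling the convergence of the cobar totalization on each layer, for this is not formal. The key input is the vanishing of Tate constructions for symmetric group actions in the $T(n)$-local category (Kuhn \cite{kuhntate}, with an alternative proof by Clausen--Mathew \cite{clausenmathew}), which is exactly the ingredient that makes both Proposition \ref{prop:coanalytic} and the Arone--Ching-style cobar computation work in this setting. Once this vanishing is in place, the derivative-level comparison reduces to the computation that Arone--Ching carry out integrally, and the coanalyticity framework upgrades it from a termwise equivalence to an equivalence of coanalytic functors, and therefore to an equivalence of operads. This would identify $\Phi_n\Theta_n$ with the $T(n)$-localization of the spectral Lie operad $\mathbf{L}$, completing the proof of Theorem \ref{thm:vnperiodicspaces} via Theorem \ref{thm:ehmm}.
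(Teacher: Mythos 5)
Your proposal follows essentially the same strategy as the paper: use the coanalytic framework (Propositions \ref{prop:coanalytic}, \ref{prop:coanalytic2} and Corollary \ref{cor:operadTn}) to reduce the question to a comparison of Goodwillie derivatives, and then invoke the Arone--Ching theorem to identify those derivatives. Two points deserve attention, however.

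First, the way the paper actually reaches the Arone--Ching statement is through the factorization
\begin{equation*}
D_k(\Phi_n\Theta_n) \simeq \Phi_n \circ D_k\bigl(\mathrm{id}_{\mathcal{S}_{v_n}}\bigr) \circ \Theta_n,
\end{equation*}
which holds because $\Theta_n$ preserves colimits and $\Phi_n$ preserves limits and filtered colimits, so both commute with the finite-limit-and-filtered-colimit constructions used in Goodwillie calculus. This is the step that converts a statement about $\Phi_n\Theta_n$ into the known statement about $\partial_*\mathrm{id}$; your appeal to $\Phi_n\Omega^\infty_{v_n}\simeq\mathrm{id}$ and $\Sigma^\infty_{v_n}\Theta_n\simeq\mathrm{id}$ is related in spirit but is not a substitute, since it lives at the level of the cobar resolution rather than at the level of layers, and by itself does not justify transporting derivatives across the two adjunctions.

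Second, you attribute the convergence of the cobar totalization on each layer to Tate vanishing in $\mathrm{Sp}_{T(n)}$, but that is not where the hard work happens. The Arone--Ching identification of $\partial_*\mathrm{id}$ with $\mathrm{Cobar}(\mathbf{1}, \partial_*(\Sigma^\infty\Omega^\infty), \mathbf{1})$ is an \emph{integral} theorem, proved in $\mathrm{Sp}$ where Tate constructions certainly do not vanish; its proof is a finite-stage Goodwillie-tower argument, not a Tate-vanishing argument. What Tate vanishing actually buys you here is earlier and different: it underlies Propositions \ref{prop:coanalytic} and \ref{prop:coanalytic2}, so that coanalytic functors on $\mathrm{Sp}_{T(n)}$ are the same as symmetric sequences and a termwise equivalence of derivatives genuinely promotes to an equivalence of functors, hence of operads. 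Keeping these two ingredients separate is important; conflating them would obscure why the paper can simply cite Arone--Ching rather than reproving a $T(n)$-local convergence statement.
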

\begin{proof}[Sketch of proof]
Checking whether a natural transformation between coanalytic functors is an equivalence can be done at the level of Goodwillie derivatives. On the left-hand side, the fact that $\Theta_n$ preserves colimits and $\Phi_n$ preserves limits, as well as filtered colimits, implies that
\begin{equation*}
D_k(\Phi_n\Theta_n) \simeq \Phi_n \circ D_k\mathrm{id}_{\mathcal{S}_{v_n}} \circ \Theta_n.
\end{equation*}
This reduces the verification to checking that
\begin{equation*}
\partial_*\mathrm{id}_{\mathcal{S}_{v_n}} \rightarrow \mathrm{Cobar}(\mathbf{1}, \partial_*(\Sigma^\infty_{v_n}\Omega^\infty_{v_n}), \mathbf{1})
\end{equation*}
is an equivalence, which is yet another version of the result of Arone--Ching (Theorem 0.3 of \cite{aroneching}) already mentioned in Section \ref{sec:Lie}.
\end{proof}

\begin{remark}
Theorem \ref{thm:vnperiodicspaces} is proved in the rather abstract setting of $T(n)$-local homotopy theory. However, one can specialize it to obtain a version for $K(n)$-local homotopy theory as well. 
\end{remark}

\index{Whitehead product}
\begin{remark}
After Theorem \ref{thm:rationalhomotopy} we indicated that the Lie model of a rational space in particular encodes the Whitehead products on rational homotopy groups. Similarly, the spectral Lie algebra model of Theorem \ref{thm:vnperiodicspaces} will in particular encode the Whitehead products on $v_n$-periodic homotopy groups. A concise proof of this can be given by `differentiating' the Hilton--Milnor theorem.
\end{remark}

\index{Koszul duality}
In the remainder of this section we discuss to what extent there is a model for $\mathcal{S}_{v_n}$ in terms of commutative coalgebras, Koszul dual to the Lie algebra model provided by Theorem \ref{thm:vnperiodicspaces}. This is closely related to recent work of Behrens--Rezk \cite{behrensrezk}, as we will explain.

As in the rational case (cf. Section \ref{sec:rationalhomotopy}), one can construct a functor
\begin{equation*}
C_{v_n}: \mathcal{S}_{v_n} \rightarrow \mathrm{coCAlg}^{\mathrm{nu}}(\mathrm{Sp}_{v_n})
\end{equation*}
by observing that each space $X \in \mathcal{S}_{v_n}$ is a non-unital commutative algebra with respect to smash product (using the diagonal as comultiplication) and that the suspension spectrum functor $\Sigma^\infty_{v_n}$ preserves smash products. Alternatively, identifying $\mathcal{S}_{v_n}$ with $\mathrm{Lie}(\mathrm{Sp}_{v_n})$ as in Theorem \ref{thm:vnperiodicspaces}, one can use Theorem \ref{thm:koszul1} to produce a functor from $\mathcal{S}_{v_n}$ to the $\infty$-category of commutative coalgebras in $\mathrm{Sp}_{v_n}$. The functor $C_{v_n}$ admits a right adjoint for which we write $R_{v_n}$. Its existence follows from the adjoint functor theorem; alternatively, it can be constructed much more concretely as the \emph{primitives} of a coalgebra. For this reason we denote the composite functor $\Phi_n R_{v_n}$ by
\begin{equation*}
\mathrm{prim}_{v_n}: \mathrm{coCAlg}^{\mathrm{nu}}(\mathrm{Sp}_{v_n}) \rightarrow \mathrm{Sp}_{v_n}.
\end{equation*}
We remind the reader that these primitives are computed by a cobar construction, which is formally dual to the derived indecomposables (or $\mathrm{TAQ}$) of a non-unital commutative ring spectrum, as discussed in Section \ref{sec:koszul}. The left adjoint of $\mathrm{prim}_{v_n}$ is the functor which equips a spectrum $E \in \mathrm{Sp}_{v_n}$ with the trivial non-unital commutative algebra structure.
\index{topological Andr\'{e}-Quillen homology}

The unit of the adjunction described above is a map
\begin{equation*}
\mathrm{id}_{\mathcal{S}_{v_n}} \rightarrow R_{v_n} C_{v_n},
\end{equation*}
which admits the following descriptions:
\begin{itemize}
\item[(1)] It is the \emph{Goodwillie completion}, in the sense that the functor $R_{v_n} C_{v_n}$ is the limit of the Goodwillie tower of the identity on the $\infty$-category $\mathcal{S}_{v_n}$.
\item[(2)] After identifying $\mathcal{S}_{v_n}$ with the $\infty$-category $\mathrm{Lie}(\mathrm{Sp}_{v_n})$, it is the \emph{pronilpotent completion} of a spectral Lie algebra. This is defined as follows. For every $k \geq 1$ one considers the truncation $t_k\colon \mathbf{L} \rightarrow \tau_{\leq k} \mathbf{L}$, which is an equivalence in arities up to $k$ and has $\tau_{\leq k} \mathbf{L}(j) \cong 0$ for $j > k$. Any $\mathbf{L}$-algebra $X$ then admits a truncation $t^*_k(t_!)_k X$, by pushing forward and pulling back along $t_k$. The inverse limit over $k$ is the pronilpotent completion of $X$. The connection with (1) is that the functor $t^*_k(t_k)_!$ is the $k$-excisive approximation (in the sense of Goodwillie) of the identity functor of the $\infty$-category of spectral Lie algebras. (This perspective originates in \cite{pereira}.)
\end{itemize}
\index{Goodwillie tower of the identity}

It is not difficult to construct examples of objects in $\mathrm{Lie}(\mathrm{Sp}_{v_n})$ which are \emph{not} complete in the above sense (e.g. most free spectral Lie algebras). Consequently, the functor $C_{v_n}$ cannot be an equivalence of $\infty$-categories. Nonetheless, the comparison with coalgebras is a very useful way of `approximating' the $\infty$-category $\mathcal{S}_{v_n}$ and an effective method of computing the value of the Bousfield--Kuhn functor $\Phi$ on many spaces of interest, including spheres. We will conclude this section by making this precise, inspired by the work of Behrens--Rezk \cite{behrensrezk}.

Consider the following diagram of adjoint pairs, with left adjoints on top or on the left:
\[
\begin{tikzcd}
&& \mathcal{S}_* \ar[shift right]{d}[swap]{L_n^f} & \\
\mathrm{Sp}_{T(n)} \ar[shift left]{r}{\Theta_n} & \mathcal{S}_{v_n} \ar[shift left]{l}{\Phi_n} \ar[shift left]{r} & L_n^f\mathcal{S}_* \ar[shift left]{l}{M} \ar[shift left]{r}{C_{L_n^f}} \ar[shift right]{u} & \mathrm{coCAlg}^{\mathrm{nu}}(\mathrm{Sp}_{T(n)}). \ar[shift left]{l}{R_{L_n^f}}
\end{tikzcd}
\]

Here $L_n^f$ is the localization away from a finite suspension space of type $n+1$, as in the previous section. The functor $C_{L_n^f}$ assigns to an $L_n^f$-local space $X$ the $T(n)$-localization of its suspension spectrum $L_{T(n)}\Sigma^\infty X$, with coalgebra structure using the diagonal of $X$ as usual. Recall our slight abuse of notation, writing $\Phi_n$ for the functor $\mathcal{S}_{v_n} \rightarrow \mathrm{Sp}_{T(n)}$ in the diagram, as well as for the composition 
\begin{equation*}
\mathcal{S}_* \xrightarrow{L_n^f} L_n^f\mathcal{S}_* \xrightarrow{M} \mathcal{S}_{v_n} \xrightarrow{\Phi_n} \mathrm{Sp}_{T(n)}.
\end{equation*}
Using the unit of the adjoint pair $(C_{L_n^f}, R_{L_n^f})$ we find for every pointed space $X$ the \emph{comparison map}
\begin{equation*}
\Phi_n(X) \rightarrow \Phi_n R_{L_n^f} C_{L_n^f}(L_n^f X) \cong \mathrm{prim}_{v_n} C_{L_n^f}(L_n^f X).
\end{equation*}
The crucial point is that the right-hand side can be identified with the limit of the Goodwillie tower
\begin{equation*}
\cdots \rightarrow (P_3\Phi_n)(X) \rightarrow (P_2\Phi_n)(X) \rightarrow (P_1\Phi_n)(X) \simeq L_{T(n)}\Sigma^\infty X
\end{equation*}
of $\Phi_n$. A variant of this idea was established in the work of Behrens--Rezk \cite{behrensrezk}. The perspective sketched here is discussed in Section 5 of \cite{heuts}.

\index{$\Phi_n$-good space}
\begin{definition}
\label{def:Phigood}
A pointed space $X$ is \emph{$\Phi_n$-good} if the map
\begin{equation*}
\Phi_n X \rightarrow \varprojlim_k (P_k\Phi_n)(X)
\end{equation*}
is an equivalence. If this map is an equivalence after $K(n)$-localization, then $X$ is \emph{$\Phi_{K(n)}$-good}.
\end{definition}

A consequence of the discussion above is therefore:

\begin{theorem}
\label{thm:BR}
A pointed space $X$ is $\Phi_n$-good if and only if the comparison map
\begin{equation*}
\Phi_n(X) \rightarrow \mathrm{prim}_{v_n} C_{L_n^f}(L_n^f X)
\end{equation*}
is an equivalence.
\end{theorem}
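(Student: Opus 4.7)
The plan is to verify that the right-hand side $\mathrm{prim}_{v_n} C_{L_n^f}(L_n^f X)$ is naturally equivalent to the Goodwillie tower limit $\varprojlim_k (P_k \Phi_n)(X)$; once this is established, the theorem is immediate from Definition \ref{def:Phigood}. The strategy has three ingredients, and only the last is substantive.

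First, I would collapse the coalgebra side down to $\mathcal{S}_{v_n}$: both $C_{L_n^f}$ and $C_{v_n} \circ M$ send a space to $L_{T(n)}\Sigma^\infty X$ equipped with the diagonal comultiplication, so $C_{L_n^f}(L_n^f X) \simeq C_{v_n}(MX)$, and since $\mathrm{prim}_{v_n} = \Phi_n R_{v_n}$ by definition, the target becomes $\Phi_n R_{v_n} C_{v_n}(MX)$. Second, I would identify the unit $Y \to R_{v_n} C_{v_n}(Y)$ with pronilpotent completion. Under the equivalence $\mathcal{S}_{v_n} \simeq \mathrm{Lie}(\mathrm{Sp}_{T(n)})$ of Theorem \ref{thm:vnperiodicspaces}, the adjoint pair $(C_{v_n}, R_{v_n})$ is the Koszul bar--cobar adjunction of Theorem \ref{thm:koszul1} for the spectral Lie operad $\mathbf{L}$, whose Koszul dual $\mathrm{Bar}(\mathbf{L}) \simeq \Sigma^\infty_{v_n}\Omega^\infty_{v_n}$ plays the role of commutative cooperad (Theorem \ref{thm:kuhn}). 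The Pereira-style identification recalled in the second bulleted item preceding the theorem then says that the tower of truncations $\{t_k^*(t_k)_! Y\}_k$ is the Goodwillie tower of $\mathrm{id}_{\mathcal{S}_{v_n}}$ at $Y$, and its limit is $R_{v_n} C_{v_n}(Y)$. Applied to $Y = MX$, this yields
\begin{equation*}
R_{v_n} C_{v_n}(MX) \simeq \varprojlim_k (P_k \mathrm{id}_{\mathcal{S}_{v_n}})(MX).
\end{equation*}

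Third, I would commute $\Phi_n$ past this limit. Since $\Phi_n$ is a right adjoint to $\Theta_n$ and preserves finite limits and filtered colimits by Theorem \ref{thm:Mnf}(1), postcomposition with $\Phi_n$ sends the Goodwillie tower of $\mathrm{id}_{\mathcal{S}_{v_n}}$, precomposed with the localization $M \circ L_n^f \colon \mathcal{S}_* \to \mathcal{S}_{v_n}$, to the Goodwillie tower of $\Phi_n$; taking limits in $k$ and chaining the three identifications produces the desired equivalence. The main obstacle is precisely this last commutation, namely verifying
\begin{equation*}
\Phi_n \circ (P_k \mathrm{id}_{\mathcal{S}_{v_n}}) \circ M \circ L_n^f \simeq P_k(\Phi_n).
\end{equation*}
Formally, postcomposition with a limit-preserving functor sends $k$-excisive functors to $k$-excisive functors, but one must verify that the resulting natural map $P_k \Phi_n \to \Phi_n \circ (P_k \mathrm{id}_{\mathcal{S}_{v_n}}) \circ M \circ L_n^f$ is an equivalence, not merely a map into a $k$-excisive functor. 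This amounts to showing that the Goodwillie derivatives of $M \circ L_n^f$ agree with those of $\mathrm{id}_{\mathcal{S}_{v_n}}$ on the relevant cubical diagrams, which is where Theorem \ref{thm:Mnf} is genuinely used; this is the crux of the Behrens--Rezk perspective \cite{behrensrezk} and is carried out in Section 5 of \cite{heuts}.
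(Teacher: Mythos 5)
Your proposal is correct and follows essentially the same strategy as the paper: the paper itself reduces the theorem to the identification $\mathrm{prim}_{v_n}C_{L_n^f}(L_n^fX)\simeq\varprojlim_k(P_k\Phi_n)(X)$, states this as "the crucial point," and defers the details to Behrens--Rezk and Section 5 of \cite{heuts}, which is exactly what your three-step outline reconstructs. Two small imprecisions worth flagging: the fact that $\Phi_n$ preserves limits and filtered colimits comes from it being a right adjoint plus the proof of Theorem \ref{thm:ehmm} (not Theorem \ref{thm:Mnf}(1), which concerns $M$), and the final sentence about "the Goodwillie derivatives of $M\circ L_n^f$ agreeing with those of $\mathrm{id}_{\mathcal{S}_{v_n}}$" is loosely phrased since these functors have different domains --- what is actually needed is that $M\circ L_n^f$ interacts with the $T_k$-construction well enough to give $P_k(\Phi_n)\simeq P_k(\Phi_n\vert_{\mathcal{S}_{v_n}})\circ M\circ L_n^f$, which is indeed what \cite{heuts} verifies.
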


The work of Behrens--Rezk \cite{behrensrezk} is in the $K(n)$-local setting and uses commutative algebras, rather than coalgebras. The dual of the commutative coalgebra $\Sigma^\infty X$ is the commutative non-unital ring spectrum $\mathbb{S}^X$. By taking Spanier--Whitehead duals, the derived primitives of $\Sigma^\infty X$ admit a map
\begin{equation*}
\mathrm{prim}(\Sigma^\infty X) \rightarrow \mathrm{TAQ}(\mathbb{S}^{X})^{\vee}
\end{equation*}
to the dual of the derived indecomposables of $\mathbb{S}^X$. In this way one gets the comparison map of the following result:

\begin{corollary}[Behrens--Rezk \cite{behrensrezk}]
\label{cor:BR}
If $X$ is a pointed space for which $L_{K(n)}\Sigma^\infty X$ is $K(n)$-locally dualizable, then $X$ is $\Phi_{K(n)}$-good if and only if the comparison map
\begin{equation*}
L_{K(n)} \Phi_n(X) \rightarrow L_{K(n)}(\mathrm{TAQ}(\mathbb{S}^{X}))^{\vee}
\end{equation*}
is an equivalence.
\end{corollary}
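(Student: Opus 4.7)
The plan is to deduce this from Theorem \ref{thm:BR} by translating the coalgebra side to the algebra side via Spanier-Whitehead duality in the $K(n)$-local setting.

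First, I would specialize Theorem \ref{thm:BR} to the $K(n)$-local category. Since $L_{K(n)}$ factors through $L_{T(n)}$, essentially the same argument gives that $X$ is $\Phi_{K(n)}$-good precisely when the $K(n)$-localization of the comparison map
\begin{equation*}
\Phi_n(X) \rightarrow \mathrm{prim}_{v_n} C_{L_n^f}(L_n^f X)
\end{equation*}
is an equivalence. The coalgebra $C_{L_n^f}(L_n^f X)$ is $L_{T(n)}\Sigma^\infty X$ with its diagonal-induced comultiplication, so after $K(n)$-localization it becomes $L_{K(n)}\Sigma^\infty X$ as a non-unital commutative coalgebra.

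Second, I would apply Spanier-Whitehead duality. Since $L_{K(n)}\Sigma^\infty X$ is $K(n)$-locally dualizable by hypothesis, its dual is $L_{K(n)}\mathbb{S}^X$ as a non-unital commutative algebra, with multiplication induced by the diagonal of $X$. Under this duality the cocommutative coalgebra structure on $L_{K(n)}\Sigma^\infty X$ corresponds to the commutative algebra structure on $L_{K(n)}\mathbb{S}^X$.

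Third, I would identify the derived primitives of a dualizable coalgebra with the dual of the derived indecomposables of the dual algebra. Recall from Section \ref{sec:koszul} that $\mathrm{prim}(C)$ is the totalization of the cosimplicial object $\mathrm{Cobar}(\mathbf{1}, C, \mathbf{1})^\bullet$, while $\mathrm{TAQ}(A)$ is the geometric realization of the simplicial object $\mathrm{Bar}(\mathbf{1}, A, \mathbf{1})_\bullet$. When $C$ is dualizable, each $C^{\otimes k}$ is dualizable with dual $A^{\otimes k}$, so the two (co)simplicial diagrams are termwise Spanier-Whitehead dual, with face and degeneracy maps interchanged. Since linear duality sends colimits of dualizable diagrams to limits, one obtains a natural equivalence
\begin{equation*}
\mathrm{prim}_{v_n}(L_{K(n)}\Sigma^\infty X) \simeq L_{K(n)}(\mathrm{TAQ}(L_{K(n)}\mathbb{S}^X))^{\vee},
\end{equation*}
which identifies the target of the comparison map of Theorem \ref{thm:BR} with that of the corollary.

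The main obstacle I anticipate is making the third step precise. The equivalence $\mathrm{prim}(C) \simeq \mathrm{TAQ}(C^\vee)^\vee$ is not formal, because totalizations and geometric realizations do not interchange under linear duality in general. The dualizability hypothesis on $L_{K(n)}\Sigma^\infty X$ is exactly what forces each level of the (co)bar construction to be dualizable, allowing the duality to be implemented levelwise; convergence of the resulting limit of duals (which is essentially the convergence of the Goodwillie tower of $\Phi_n$ at $X$) is then the substantive condition encoded by $\Phi_{K(n)}$-goodness itself.
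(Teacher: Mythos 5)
Your overall strategy — specialize Theorem \ref{thm:BR} to the $K(n)$-local setting, use Spanier--Whitehead duality to pass from the coalgebra $L_{K(n)}\Sigma^\infty X$ to the algebra $L_{K(n)}\mathbb{S}^X$, and then identify $\mathrm{prim}_{v_n}$ with $\mathrm{TAQ}^\vee$ — is the right high-level plan and matches the paper's framing. But the third step has a genuine gap, and the closing paragraph misdiagnoses it.

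First, the bar and cobar constructions appearing here are the \emph{operadic} ones from Section \ref{sec:koszul}, not the associative ones: the $k$-simplices of $\mathrm{Bar}$ are iterates $F_{\mathcal{O}}^k(A)$ of the free non-unital commutative algebra monad $F_{\mathcal{O}}(Y)\simeq\bigoplus_{j\geq 1}(Y^{\otimes j})_{h\Sigma_j}$, while the cosimplices of $\mathrm{Cobar}$ are iterates of the cofree commutative coalgebra comonad, which is given by a \emph{product} over $j$ with homotopy \emph{invariants}. These are not of the form $A^{\otimes k}$ and $C^{\otimes k}$. Matching them up under duality therefore requires the Tate vanishing of Kuhn, which you never invoke, to exchange $(\,\cdot\,)_{h\Sigma_j}$ with $(\,\cdot\,)^{h\Sigma_j}$, \emph{and} a separate argument to handle the direct sum versus product discrepancy. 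Moreover, your claim that dualizability of $L_{K(n)}\Sigma^\infty X$ ``forces each level of the (co)bar construction to be dualizable'' is false: even when $A$ is dualizable, $F_{\mathcal{O}}(A)$ is an infinite coproduct of dualizables and is in general not dualizable in $\mathrm{Sp}_{K(n)}$, so the termwise duality you need does not propagate past simplicial level one. (Also, your remark that geometric realizations and totalizations ``do not interchange under linear duality'' points in the wrong direction: duality \emph{always} takes a geometric realization to a totalization of the termwise duals; the failure is entirely in identifying those termwise duals.)

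Second, and more seriously, the final paragraph conflates two logically independent conditions. Whether $\mathrm{prim}_{v_n}(C)\simeq\mathrm{TAQ}(C^\vee)^\vee$ holds is a statement about two candidate \emph{targets} of the comparison map coinciding; whether $X$ is $\Phi_{K(n)}$-good is a statement about the \emph{source} $L_{K(n)}\Phi_n(X)$ mapping isomorphically to that target. For Corollary \ref{cor:BR} to be a consequence of Theorem \ref{thm:BR}, one must show that the dualizability hypothesis \emph{by itself} forces $\mathrm{prim}_{v_n}(L_{K(n)}\Sigma^\infty X)\simeq L_{K(n)}\mathrm{TAQ}(\mathbb{S}^X)^\vee$, independently of any goodness of $X$; only then does the equivalence of the composite become equivalent to $\Phi_{K(n)}$-goodness. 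Absorbing the needed convergence of the dualized TAQ tower into the definition of $\Phi_{K(n)}$-goodness, as you do, is circular. The honest proof proceeds by comparing the towers (the arity filtration on $\mathrm{prim}_{v_n}$ and the TAQ tower, dualized) layerwise, where the dualizability hypothesis does give dualizable layers, and then invoking a convergence result for the dual TAQ tower on dualizable inputs; this is the substantive input from Behrens--Rezk \cite{behrensrezk} that the corollary is crediting.
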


A discussion of these results from a different perspective is in the chapter of Arone--Ching in this volume. The use of Corollary \ref{cor:BR} is that the codomain of the comparison map is amenable to explicit calculation, using the techniques developed by Behrens--Rezk in \cite{behrensrezk}. Some examples of $\Phi_n$-goodness are the following:
\begin{itemize}
\item[(1)] Spheres are $\Phi_n$-good. This is a key result of Arone--Mahowald \cite{aronemahowald}. They prove something even stronger, namely that the map $\Phi_n(X) \rightarrow (P_k\Phi_n)(X)$ is an equivalence if $X$ is a sphere $S^l$ and $k \geq 2p^n$, or even $k \geq p^n$ if $l$ is odd.
\item[(2)] Spaces of the form $\Theta_n(\mathbb{S}^l)$ are $\Phi_n$-good.
\end{itemize}
There are also many non-examples. The following were essentially first observed in \cite{brantnerheuts}:
\begin{itemize}
\item[(1)] Suppose $V$ is a type $n$ space with a $v_n$ self-map and write $W = \Sigma^2 V$. Then $W$ is \emph{not} $\Phi_n$-good. Briefly, under these conditions one has $W \simeq \Theta_n(\Sigma^\infty_{v_n} W)$. Translating to $\mathrm{Lie}(\mathrm{Sp}_{v_n})$, $W$ corresponds to a \emph{free} spectral Lie algebra. In particular,
\begin{equation*}
\Phi_n(W) \simeq L_{T(n)} \bigoplus_{k \geq 1} (\mathbf{L}(k) \otimes \Sigma^\infty_{v_n} W^{\otimes k})_{h\Sigma_k}.
\end{equation*}
The limit of the Goodwillie tower of $\Phi_n$ evaluated on $W$ will give the direct product, rather than the direct sum.
\item[(2)] A wedge of spheres is not $\Phi_n$-good. Using the Hilton--Milnor theorem, one can describe $\Phi_n(S^a \vee S^b)$ as a direct sum of terms of the form $\Phi_n(S^l)$, for $l$ ranging over an infinite set determined by $a$ and $b$. Again, the limit of the Goodwillie tower will instead be the direct product.
\end{itemize}

There exist more extreme counterexamples of the following kind. Take $X$ to be the cofiber (in $\mathcal{S}_{v_n}$) of a map which is a $T(n)_*$-equivalence, but not a $v_n$-periodic equivalence. Such maps were essentially first described for $n=1$ by Langsetmo--Stanley \cite{langsetmostanley}, by modifying the Adams self-map of a Moore space. The Goodwillie tower of $\Phi_n$ evaluated on $X$ will then vanish identically, although $X$ itself is non-trivial.

\section{Questions}
\label{sec:questions}

In this final section we list several open questions, loosely grouped by subject.

\textbf{(A) Localization and completion.} 

The first few questions we list below are closely related to some of those raised by Behrens--Rezk in their survey \cite{behrensrezksurvey}. In the $\infty$-category $\mathcal{S}_{v_n}$ there are the following analogues of the usual notions of localization and completion:
\begin{itemize}
\item[(a)] Every object $X \in \mathcal{S}_{v_n}$ admits a \emph{$T(n)$-localization}. Note that this is the same as Bousfield localization of objects in $\mathcal{S}_{v_n}$ with respect to the stabilization functor $\Sigma^\infty_{v_n}\colon \mathcal{S}_{v_n} \rightarrow \mathrm{Sp}_{T(n)}$.
\item[(b)] The \emph{$Q_{v_n}$-completion} of an object $X \in \mathcal{S}_{v_n}$ is the limit of the Bousfield--Kan cosimplicial object
\[
\begin{tikzcd}
\Omega^\infty_{v_n}\Sigma^\infty_{v_n} X \ar[shift left]{r} \ar[shift right]{r} & (\Omega^\infty_{v_n}\Sigma^\infty_{v_n})^2 X \ar{l} \ar{r} \ar[shift left = 2]{r} \ar[shift right = 2]{r} & \cdots. \ar[shift right]{l} \ar[shift left]{l}
\end{tikzcd}
\]
\item[(c)] The \emph{Goodwillie completion} of an object $X \in \mathcal{S}_{v_n}$ is the limit of the Goodwillie tower
\begin{equation*}
\cdots \rightarrow P_3\mathrm{id}_{\mathcal{S}_{v_n}}(X) \rightarrow P_2\mathrm{id}_{\mathcal{S}_{v_n}}(X) \rightarrow P_1\mathrm{id}_{\mathcal{S}_{v_n}}(X) = \Omega^\infty_{v_n}\Sigma^\infty_{v_n} X.
\end{equation*}
of the identity of $\mathcal{S}_{v_n}$ evaluated on $X$. Alternatively, identifying $\mathcal{S}_{v_n}$ with $\mathrm{Lie}(\mathrm{Sp}_{v_n})$, the Goodwillie completion is the pronilpotent completion of a spectral Lie algebra.
\end{itemize}

If $X$ is $Q_{v_n}$-complete, or Goodwillie complete, then it is also $T(n)$-local. An argument of Bousfield can be adapted to show that every $H$-space in $\mathcal{S}_{v_n}$ is already $T(n)$-local. An object $X$ is Goodwillie complete if and only if it is $\Phi_n$-good in the sense of Definition \ref{def:Phigood}. 
\begin{itemize}
\item[(A1)] It is not hard to argue that the class of $\Phi_n$-good spaces is closed under finite products. What other general closure properties does this class have?
\item[(A2)] Do the $Q_{v_n}$-completion and the Goodwillie completion agree?
\item[(A3)] Under what conditions does the $T(n)$-localization of $X$ agree with its $Q_{v_n}$-completion or its Goodwillie completion?
\item[(A4)] What is the relation between the spectral sequence associated with the cosimplicial object of (b) (which is a version of the unstable Adams spectral sequence for the $\infty$-category $\mathcal{S}_{v_n}$) and the $v_n$-periodic unstable Adams and Adams--Novikov spectral sequences studied by Bendersky--Curtis--Miller \cite{benderskycurtismiller}, Bendersky \cite{bendersky}, and Davis--Mahowald \cite{davismahowald}?
\end{itemize}

We say a spectral Lie algebra $X$ is \emph{nilpotent} if it is in the essential image of the pullback functor 
\begin{equation*}
t_k^*: \mathrm{Alg}(\tau_{\leq k}\mathbf{L}) \rightarrow \mathrm{Alg}(\mathbf{L})
\end{equation*}
for some $k \geq 1$. With this terminology, the $T(n)$-local spectral Lie algebras corresponding to spheres are nilpotent, by the results of Arone--Mahowald discussed in the previous section. We know that these particular examples are $\Phi_n$-good.

\begin{itemize}
\item[(A5)] Is any nilpotent $X \in \mathrm{Lie}(\mathrm{Sp}_{T(n)})$ complete in the sense of either (b) or (c)?
\end{itemize}

This would provide a large class of examples of $\Phi_n$-good spaces, since the class of nilpotent spectral Lie algebras satisfies various closure properties not obviously shared by the class of $\Phi_n$-good spaces.

\textbf{(B) Exponents.} 

The torsion part of the homotopy groups of $S^n$ has a $p$-exponent. Better yet, Cohen--Moore--Neisendorfer \cite{CMN1,CMN2} prove that the $p^k$-power map of the $H$-space $\Omega_0^{2k+1}S^{2k+1}$ is nullhomotopic. Consequently, the spectrum $\Phi_n(S^{2k+1})$ has the same $p$-exponent. As usual, one deduces corresponding results for even-dimensional spheres from this using the EHP sequence. One can speculate about versions of such results for $v_i$-exponents with $i > 0$. Wang \cite{wang} computes the homotopy groups of $L_{K(2)}\Phi_2(S^3)$ at primes $p \geq 5$ and shows that $v_1^2$ acts trivially on them. However, the situation is more subtle than before, since the element $v_1$ does \emph{not} act trivially on the Morava $E$-theory $E_*(\Phi_2(S^3))$. 

There are similar results for $p$-exponents of Moore spaces $S^k/p^r$, with $k \geq 2$ and $r \geq 1$. If $p$ is odd, then the work of Cohen--Moore--Neisendorfer \cite{CMN3} and Neisendorfer \cite{neisendorfer} shows that $\Omega^2 S^k/p^r$ has null-homotopic $p^{r+1}$-power map. For $p=2$ and $r \geq 2$ there are 2-primary exponent results by Theriault \cite{theriault}. It seems the remaining case $S^k/2$ is still open. These exponent results for Moore spaces give corresponding exponents for the spectra $\Phi_1 S^k/p^r$. A possible generalization would be the following:

\begin{itemize}
\item[(B1)] If $V$ is the suspension of a type $n$ space, does the spectrum $\Phi_n V$ have a $v_i$-exponent for all $i < n$?
\end{itemize}

Here we say that a spectrum $X$ has a $v_i$-exponent if for any finite type $i$ spectrum $W$ with $v_i$ self-map $v$, the smash product map $v \otimes X$ is nilpotent. Under the equivalence between $\mathcal{S}_{v_n} \simeq \mathrm{Lie}(\mathrm{Sp}_{v_n})$, the space $\Sigma V$ corresponds to the free spectral Lie algebra on the spectrum $\Sigma^{\infty+1} V$. This allows for a reformulation of (B1) in terms of the $v_i$-exponents of the underlying spectrum of that free spectral Lie algebra.

% exponents of Moore spaces
% CMN: splittings of loop spaces of Moore spaces

\textbf{(C) The Bousfield--Kuhn functor.}

\index{Bousfield--Kuhn functor}
Although $T(n)$- or $K(n)$-homology equivalences of spaces behave quite differently from $v_n$-periodic equivalences, there are still many statements for homology which have counterparts for $v_n$-periodic homotopy groups. One such example is the Whitehead theorem: if a map $f \colon X \rightarrow Y$ of simply-connected pointed spaces induces an isomorphism in $K(n)$-homology for each $n \geq 0$, then it is a weak homotopy equivalence. This is proved by Bousfield in \cite{bousfieldhomologyequiv}; an alternative proof is given by Hopkins--Ravenel in \cite{hopkinsravenel}. There is a version for $v_n$-periodic homotopy groups which states the following:

\begin{theorem}[Barthel--Heuts--Meier, \cite{BHM}]
If a map $f \colon X \rightarrow Y$ of simply-connected finite CW-complexes is a $v_n$-periodic equivalence for every $n \geq 0$, then $f$ is a $p$-local homotopy equivalence.
\end{theorem}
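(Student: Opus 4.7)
The plan is to run a chromatic assembly argument, converting the unstable hypothesis into a stable statement that is amenable to the chromatic convergence theorem for $p$-local finite spectra. I fix a prime $p$ and work $p$-locally throughout. Since $X$ and $Y$ are simply-connected finite CW-complexes, a map between them is a $p$-local weak equivalence if and only if it is a $\mathbb{Z}_{(p)}$-homology equivalence; equivalently, $f$ is a $p$-local equivalence if and only if $\Sigma^\infty f$ is an equivalence of ($p$-local) finite spectra. Via chromatic convergence for $p$-local finite spectra (Hopkins--Ravenel, applied to the finite-localization tower), the latter reduces to showing that $L_n^f\Sigma^\infty f$ is an equivalence for every $n\geq 0$, or equivalently that $L_{T(n)}\Sigma^\infty f$ is an equivalence for every $n\geq 0$.

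The remaining content is therefore the following translation between unstable and stable chromatic data: if $f\colon X\to Y$ is a $v_n$-periodic equivalence of simply-connected finite CW-complexes, then $L_{T(n)}\Sigma^\infty f$ is an equivalence of spectra. For $n=0$ this is immediate, since a rational equivalence of simply-connected spaces is the same as a rational homology equivalence, which is the same as $H\mathbb{Q}\otimes \Sigma^\infty f$ being an equivalence. For $n\geq 1$ the identity $\Phi_n\Omega^\infty \simeq L_{T(n)}$ means that the desired statement is equivalent to $\Omega^\infty\Sigma^\infty f$ being a $v_n$-periodic equivalence of spaces.

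I would establish this last point using the Goodwillie tower of $\Omega^\infty\Sigma^\infty$ together with Kuhn's $T(n)$-local Snaith-type splitting (Theorem~\ref{thm:kuhn}). The $k$-th homogeneous layer of the tower evaluated on $\Sigma^\infty X$ is of the form $\Omega^\infty(\Sigma^\infty X)^{\otimes k}_{h\Sigma_k}$, and $T(n)$-local Tate vanishing (underlying the coanalyticity results of Proposition~\ref{prop:coanalytic}) ensures that these extended-power functors preserve $T(n)$-local equivalences of their inputs. One then argues inductively, using $v_m$-periodic equivalence of $f$ for the relevant $m\le n$ to propagate from $k=1$ up the tower, and invokes the convergence of the Goodwillie tower of $\Omega^\infty\Sigma^\infty$ on simply-connected finite CW-complexes (which uses only that $X$ is simply-connected, so that the $k$-th layer is at least $2k-2$-connected) to pass to the limit.

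The hard part will be the inductive step just described: passing from ``$f$ is a $v_m$-periodic equivalence of spaces'' to ``$\Sigma^\infty f$ induces a $T(n)$-local equivalence on extended symmetric powers'' is not a formal consequence of the Bousfield--Kuhn formalism, and relies crucially on the finiteness of $X$ and $Y$, on Tate vanishing, and on connectivity estimates that guarantee that only finitely many Goodwillie layers contribute in each degree. It is precisely here that the hypothesis ``for every $n$,'' rather than ``for a single $n$,'' is essential: no single $v_n$-periodic equivalence suffices to control the entire extended-power tower, but the full chromatic package assembles coherently along the $L_n^f$-tower used in the stable reduction.
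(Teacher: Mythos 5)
The initial reductions are sound: for simply-connected finite complexes it is indeed equivalent to show that $L_{T(n)}\Sigma^\infty f$ is an equivalence for every $n$ (and you do not even need chromatic convergence for this — a nonzero $p$-local finite spectrum has some finite type, so a finite spectrum that is $T(n)$-acyclic for all $n$ is zero), and the case $n=0$ is immediate. The problem is with the core step, and the proposal does not actually contain an argument for it.

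First a smaller point: $\Omega^\infty\Sigma^\infty\colon \mathcal{S}_* \to \mathcal{S}_*$ is $1$-excisive (indeed linear), since $\Sigma^\infty$ carries pushouts to pushouts $=$ pullbacks in $\mathrm{Sp}$ and $\Omega^\infty$ preserves pullbacks; so its Goodwillie tower is constant and has no higher layers. You presumably mean the Goodwillie tower of the identity on $\mathcal{S}_*$ (with layers $\Omega^\infty(\partial_k\mathrm{id}\wedge (\Sigma^\infty X)^{\wedge k})_{h\Sigma_k}$), or the tower of $\Sigma^\infty\Omega^\infty\colon\mathrm{Sp}\to\mathrm{Sp}$ evaluated on $\Sigma^\infty X$.

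The serious issue is circularity. Whichever of these towers you take, the $k$-th homogeneous layer is built by smashing $k$ copies of $\Sigma^\infty X$ against a fixed spectrum and taking $\Sigma_k$-homotopy orbits. Applying $\Phi_n$ (using $\Phi_n\Omega^\infty\simeq L_{T(n)}$) converts the statement ``the $k$-th layer is a $v_n$-periodic equivalence'' into ``$L_{T(n)}$ of a smash power of $\Sigma^\infty f$ is an equivalence''. Since $T(n)_*$-equivalences are preserved by smashing with arbitrary spectra and by homotopy orbits, the $k$-th layer is controlled by whether $\Sigma^\infty f$ is a $T(n)_*$-equivalence — which is exactly the conclusion you want for that same $n$. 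Knowing that $f$ is a $v_m$-periodic equivalence (as a map of \emph{spaces}) for $m<n$ gives no leverage on the $T(n)$-local behavior of the spectrum-level layers, so the proposed induction has no base and does not close. (Tate vanishing is also invoked in the wrong place here: it is not needed to say extended powers preserve $T(n)_*$-equivalences, which is automatic; it matters for exchanging orbits and fixed points and for coanalyticity, neither of which is what the step requires.)

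There is a second gap even if one could somehow control all the layers: passing from ``the layers agree $v_n$-periodically'' to ``the limit agrees $v_n$-periodically'' requires that $\Phi_n$ commute with the inverse limit of the tower. Integral connectivity of the layers (your $2k-2$ estimate) does not help, because $\Phi_n$ discards connectivity. This commutation is precisely what the paper calls $\Phi_n$-goodness (Definition~\ref{def:Phigood}); it is a deep theorem of Arone--Mahowald for spheres and, as the surrounding discussion and question (A5) in Section~\ref{sec:questions} make clear, is not known for general finite complexes. So the Goodwillie/Snaith tower route as outlined would need a substantial convergence input that the proposal does not supply. The actual proof in \cite{BHM} does not run the argument this way; the content beyond the formal stable reduction is a genuinely unstable statement, and a tower-of-extended-powers argument cannot supply it for the reasons above.
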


Note that this result includes a finiteness hypothesis on $X$ and $Y$. It is well-known that a finite spectrum with $K(n)_*X = 0$ also has $K(n-1)_*X = 0$. (In fact, another result of Bousfield \cite{bousfieldKnequiv} states that a space $X$ with $K(n)_*X = 0$ also has $K(i)_*X = 0$ for all $0 < i \leq n$, without assuming finiteness of $X$.)  This inspires the following question:

\begin{itemize}
\item[(C1)] For a finite pointed CW-complex $X$, does $\Phi_n X \simeq 0$ imply $\Phi_i X \simeq 0$ for $i < n$?
\end{itemize}

We already discussed the `two adjunctions' diagram
\[
\begin{tikzcd}
\mathrm{Sp}_{v_n} \ar[shift left]{r}{\Theta_n} & \mathcal{S}_{v_n} \ar[shift left]{r}{\Sigma^\infty_{v_n}} \ar[shift left]{l}{\Phi_n} & \mathrm{Sp}_{v_n} \ar[shift left]{l}{\Omega^\infty_{v_n}}
\end{tikzcd}
\]
at the end of Section \ref{sec:periodicity}. The adjunction on the right can be characterized by the universal property of stabilization (in the world of presentable $\infty$-categories); the functor $\Sigma^\infty_{v_n}$ is the initial colimit-preserving functor from $\mathcal{S}_{v_n}$ to a presentable stable $\infty$-category. A positive answer to the following would give a similar universal property of the Bousfield--Kuhn functor:

\index{costabilization}
\begin{itemize}
\item[(C2)] Is the adjoint pair $(\Theta_n,\Phi_n)$ the costabilization of $\mathcal{S}_{v_n}$? In other words, is $\Theta_n$ the terminal colimit-preserving functor from a presentable stable $\infty$-category to $\mathcal{S}_{v_n}$?
\end{itemize}

\textbf{(D) Beyond monochromatic unstable homotopy theory.}

The chromatic approach to homotopy theory involves two aspects: (1) understanding monochromatic layers and (2) assembling those layers to reconstruct a space or spectrum. So far we have only discussed (1). We will now give a brief discussion of (2) and pose some questions.

We have seen that for any $n$ there are comparison functors
\begin{equation*}
\mathcal{S}_{v_n} \rightarrow \mathrm{coCAlg}^{\mathrm{nu}}(\mathrm{Sp}_{T(n)})
\end{equation*}
that, while not fully faithful in general (except for $n=0$), at least behave reasonably well. Integrally, however, the functor
\begin{equation*}
\mathcal{S}_* \rightarrow \mathrm{coCAlg}^{\mathrm{nu}}(\mathrm{Sp})
\end{equation*}
is quite far from being fully faithful. To get a much better approximation, one should replace the right-hand side by the $\infty$-category of coalgebras for the comonad $\Sigma^\infty\Omega^\infty$ (cf. Remark \ref{rmk:SigmaOmega}). The price to pay, though, is that it is in general not so clear what has to be done in order to upgrade a commutative coalgebra spectrum to a $\Sigma^\infty\Omega^\infty$-coalgebra. A first step is to consider the pull-back square (see Proposition 1.9 of \cite{kuhntate})
\[
\begin{tikzcd}
P_2(\Sigma^\infty\Omega^\infty)(X) \ar{d}\ar{r} & (X \otimes X)^{h\Sigma_2} \ar{d} \\
X \ar{r}{\tau_2} & (X \otimes X)^{t\Sigma_2}. 
\end{tikzcd}
\]
If a commutative coalgebra $X$ has a compatible coalgebra structure for $\Sigma^\infty\Omega^\infty$, then the left-hand vertical map has a section. This is equivalent to the existence of a diagonal lift in the square. In other words, the composite of the comultiplication
\begin{equation*}
\delta_2 \colon X \rightarrow (X \otimes X)^{h\Sigma_2}
\end{equation*}
and the canonical map $(X \otimes X)^{h\Sigma_2} \rightarrow (X \otimes X)^{t\Sigma_2}$ is homotopic to the map 
\begin{equation*}
\tau_2 \colon X \rightarrow (X \otimes X)^{t\Sigma_2}.
\end{equation*}
This latter map exists for any spectrum and is called the \emph{Tate diagonal}. One can think of it as the stable (or linear) shadow of the diagonal map of spaces. (See \cite{klein,heutsapprox,nikolausscholze} for much more discussion.) More generally, one can define the notion of a \emph{Tate coalgebra} as in \cite{heuts}. It is first of all a commutative coalgebra, meaning a spectrum $X$ equipped with maps
\begin{equation*}
\delta_k \colon X \rightarrow (X^{\otimes k})^{h\Sigma_k}
\end{equation*}
for $k \geq 2$ and a coherent system of homotopies relating the $\delta_k$ for various $k$. Secondly, these comultiplications $\delta_k$ have to be compatible with certain generalized Tate diagonals
\begin{equation*}
\tau_k \colon X \rightarrow (X^{\otimes k})^{t\Sigma_k}
\end{equation*}
which are constructed inductively. It is proved in \cite{heuts} that there is an equivalence between the $\infty$-category of simply-connected pointed spaces $\mathcal{S}_*^{\geq 2}$ and the $\infty$-category $\mathrm{coAlg}^{\mathrm{Tate}}(\mathrm{Sp})^{\geq 2}$ of simply-connected Tate coalgebras in spectra. 
\index{Tate coalgebra}

In the $\infty$-category of $T(n)$-local spectra, Tate constructions associated with finite groups are contractible \cite{kuhntate}. Hence the theory of Tate coalgebras in $\mathrm{Sp}_{T(n)}$ reduces to that of commutative coalgebras. One can think of the Tate diagonals $\tau_k$ above as determining the `attaching data' between the $\infty$-categories of commutative coalgebras in $\mathrm{Sp}_{T(n)}$ for varying $n$, which assembles them together into the $\infty$-category of Tate coalgebras.

What is less clear is what the Koszul dual side of this picture should be. The $\infty$-category of spectral Lie algebras cannot be a good model for $\mathcal{S}_*$ without $v_n$-periodic localization; these $\infty$-categories have the same stabilization, but the Tate diagonals on $\mathrm{Sp}$ one would associate with the $\infty$-category $\mathrm{Lie}(\mathrm{Sp})$ are zero, as opposed to the usual Tate diagonals arising as the stabilization of the product on $\mathcal{S}_*$. Said (very) informally, the $k$-invariants of the Goodwillie tower of $\mathrm{Lie}(\mathrm{Sp})$ are trivial, whereas they are not for $\mathcal{S}_*$. However, one could hope that it is possible to assemble the $\infty$-categories $\mathrm{Lie}(\mathrm{Sp}_{T(n)})$ for varying $n$ in a more interesting way:

\begin{itemize}
\item[(D1)] Does there exist a good theory of `transchromatic' spectral Lie algebras, related to $\mathcal{S}_*$ (or $L_n^f\mathcal{S}_*$) by an adjoint pair, which after $v_n$-periodic localization reduces to the theory of $T(n)$-local spectral Lie algebras and the Bousfield--Kuhn functor $\Phi_n$ relating it to $\mathcal{S}_{v_n}$?
\item[(D2)] If question (D1) admits a reasonable answer, then what is the relation to Mandell's $p$-adic homotopy theory?
\end{itemize}

% Question: Is there a notion of spectral Tate Lie algebras, which reduces to that of spectral Lie algebras after T(n)-localization?

% L_n-local spectral Tate Lie algebras might be a model for the homotopy theory of L_n-local spaces

% discuss coalgebras first, connecting data in terms of Tate diagonals, relation to Mandell's p-adic homotopy theory
% emphasize role of Tate vanishing
% Question: What is the Lie algebra side?
% speculate about `twisted' theory of Lie algebras, which reduces to spectral Lie algebras after T(n)-localization, but with `attaching data' determined by certain Tate spectra
%
% miscellaneous:
% formality for v_n-periodic spaces
% Ganea conjecture
% structure theory for spectral Lie algebras

\bibliographystyle{plain}
\bibliography{biblio}

\end{document}